\tikzstyle{vertex}=[circle, draw, inner sep=0pt, minimum size=6pt]
\theoremstyle{plain}
   \newtheorem{theorem}{Theorem}[section]
   \newtheorem{proposition}[theorem]{Proposition}
   \newtheorem{lemma}[theorem]{Lemma}
   \newtheorem{corollary}[theorem]{Corollary}
   \newtheorem{conjecture}[theorem]{Conjecture}
\theoremstyle{definition}
   \newtheorem{definition}[theorem]{Definition}
   \newtheorem{defn}[theorem]{Definition}
   \newtheorem{example}[theorem]{Example}
   \newtheorem{remark}[theorem]{Remark}
\numberwithin{equation}{section}
\newcommand{\CC}{{\mathbb {C}}}
\newcommand{\QQ}{{\mathbb {Q}}}
\newcommand{\ZZ}{{\mathbb {Z}}}
\newcommand{\bbc}{{\mathbb {C}}}
\newcommand{\SSYT}{{\rm SSYT}}
\newcommand{\GL}{{\rm GL}}
\newcommand{\ch}{{\operatorname{ch}}}
\DeclareMathOperator*{\diag}{diag}
\DeclareMathOperator*{\wt}{wt}
\DeclareMathOperator{\Gr}{Gr}
\newlength{\mysizetiny}
\newlength{\mysizesmall}
\newlength{\mysize}
\newlength{\mysizelarge}
\begin{document}

\title{Quantum affine algebras and Grassmannians}
\author{Wen Chang, Bing Duan, Chris Fraser, and Jian-Rong Li}
\address{Wen Chang, School of Mathematics and Information Science, Shaanxi Normal University, Xi'an, China,
 changwen161@163.com}
\address{Bing Duan, School of Mathematics and Statistics, Lanzhou University, Lanzhou, China, duan890818@163.com}
\address{Chris Fraser, School of Mathematics, University of Minnesota, Minneapolis, USA, \hspace{.3cm} \textcolor{white}{.} cfraser@umn.edu}
\address{Jian-Rong Li, Department of Mathematics and scientific computing, University of Graz, Graz, Austria; Department of Mathematics, The Weizmann Institute of Science, Rehovot 7610001, Israel, lijr07@gmail.com}
\date{}

\begin{abstract}
We study the relation between quantum affine algebras of type $A$ and Grassmannian cluster algebras. Hernandez and Leclerc described an isomorphism from the Grothendieck ring of a certain subcategory $\mathcal{C}_{\ell}$ of $U_q(\widehat{\mathfrak{sl}_n})$-modules to a quotient of the Grassmannian cluster algebra in which certain frozen variables are set to 1. We explain how this induces an isomorphism between the monoid of dominant monomials, used to parameterize simple modules, and a quotient of the monoid of rectangular semistandard Young tableaux with $n$ rows and with entries in $[n+\ell+1]$. Via the isomorphism, we define an element ch$(T)$ in a Grassmannian cluster algebra for every rectangular tableau $T$. By results of Kashiwara, Kim, Oh, and Park, and also of Qin, every Grassmannian cluster monomial is of the form ch$(T)$ for some $T$. Using a formula of Arakawa-Suzuki, we give an explicit expression for ch$(T)$, and also give 
explicit $q$-character formulas for finite-dimensional $U_q(\widehat{\mathfrak{sl}_n})$-modules.  We give a tableau-theoretic rule for performing mutations in Grassmannian cluster algebras. We suggest how our formulas might be used to study reality and primeness of modules, and compatibility of cluster variables.
\end{abstract}

\maketitle

\tableofcontents

\section{Introduction}

Let $\mathfrak{g}$ be a simple Lie algebra and let $U_q(\widehat{\mathfrak{g}})$ be the corresponding quantum affine algebra. Chari and Pressley \cite{CP94} have classified the simple objects in the category of all finite-dimensional $U_q(\widehat{\mathfrak{g}})$-modules and Nakajima \cite{Nak01} has computed the characters of the simple objects in this category in terms of the cohomology of certain quiver varieties.

Fomin and Zelevinsky \cite{FZ02} introduced the theory of cluster algebras to study
canonical bases of quantum groups introduced by Lusztig \cite{L90} and Kashiwara \cite{Kas} and total
positivity for semisimple algebraic groups developed by Lusztig \cite{L94}. 

Hernandez and Leclerc \cites{HL10,HL16} applied the theory of cluster algebras to study quantum affine algebras. They introduced the notion of a {\sl monoidal categorification} of a cluster algebra. In a monoidal category $(\mathcal{C}, \otimes)$, a simple object $S$ of $\mathcal{C}$ is called {\sl real} if its tensor square $S \otimes S$ is also simple, and is called {\sl prime} if it admits no nontrivial tensor factorization $S \cong S_1 \otimes S_2$. Hernandez and Leclerc called $\mathcal{C}$ a monoidal categorification of a cluster algebra $A$ if the Grothendieck ring of $\mathcal{C}$ is isomorphic to $A$, any cluster monomial of $A$ corresponds to the class of a real simple object of $\mathcal{C}$, and any cluster variable (including the frozen ones) of $A$ corresponds to the class of a real simple prime object of $\mathcal{C}$. 

Kang, Kashiwara, Kim, and Oh \cite{KKKO} proved that the quantum unipotent coordinate algebra has a monoidal categorification as conjectured in \cites{GLS,Ki}. The connection between monoidal categorification and quantum affine algebras is as follows. Let $\mathcal{C}^{\mathfrak{g}}$ be the category of finite-dimensional $U_q(\widehat{\mathfrak{g}})$-modules.  For each $\ell \in \mathbb{Z}_{\ge 0}$, Hernandez and Leclerc \cite{HL10} introduced a full monoidal subcategory $\mathcal{C}_{\ell}^{\mathfrak{g}}$ of $\mathcal{C}^{\mathfrak{g}}$ whose objects are characterized by certain restrictions on the roots of the Drinfeld polynomials of their composition factors. They constructed a cluster algebra $\mathcal{A}_{\ell}^{\mathfrak{g}}$ and conjectured that $\mathcal{C}_{\ell}^{\mathfrak{g}}$ is a monoidal categorification of $\mathcal{A}_{\ell}$. 

Denote by $K_0(\mathcal{C}_{\ell}^{\mathfrak{g}})$ the Grothendieck ring of $\mathcal{C}_{\ell}^{\mathfrak{g}}$, as an algebra over the complex numbers.  
We use brackets $[S]$ to denote the Grothendieck class of an object $S \in \mathcal{C}_{\ell}$. Qin \cite{Qin} proved that for $\mathfrak{g}$ of type $A, D, E$, every cluster monomial (resp. cluster variable) in $K_0(\mathcal{C}_{\ell}^{\mathfrak{g}})$ is a simple (resp. prime simple) module. Recently, Kashiwara, Kim, Oh, and Park proved that when $\mathfrak{g}$ is of type $A$ or $B$, every cluster monomial in $K_0(\mathcal{C}_{\ell}^{\mathfrak{g}})$ can be identified with a real module \cite{KKOP}. 

In this paper, we focus on $\mathfrak{g}=\mathfrak{sl}_n$ and study finite-dimensional representations of $U_q(\widehat{\mathfrak{g}})$. We abbreviate $\mathcal{C}_{\ell}^{\mathfrak{g}} = \mathcal{C}_{\ell},  K_0(\mathcal{C}_{\ell}^{\mathfrak{g}}) = K_0(\mathcal{C}_{\ell})$, etc. Hernandez and Leclerc descibed an isomorphism~$\Phi$ from $K_0(\mathcal{C}_{\ell})$ to a certain quotient of the cluster algebra $\bbc[\Gr(n,m)]$ for the Grassmannian \cite[Section 13]{HL10}, where $m = n+\ell+1$. 
This quotient, which we denote by $\bbc[\Gr(n,m,\sim)]$, is the one in which solid frozen Pl\"ucker coordinates are specialized to~1 (the frozen Pl\"ucker coordinates whose columns wrap around modulo $m$ are not specialized).  

Simple objects in $\mathcal{C}_{\ell}^{\mathfrak{g}}$ are parametrized by elements of a free abelian monoid $\mathcal{P}_{\ell}^+=\mathcal{P}^+_{\ell, \mathfrak{g}}$ (cf. \cites{CP94,HL10}) thought of as monomials in variables $Y_{i,i-2k-2}$, $i \in I$, $k \in [0, \ell]$, where $I$ is the vertex set of the Dynkin diagram of $\mathfrak{g}$. Denote by $L(M)$ the simple module corresponding to $M \in \mathcal{P}^+_\ell$. 

Our first theorem interprets this parameterization of simple modules in terms of Grassmannians. Denote by ${\rm SSYT}(n, [m])$ the set of semistandard Young tableaux of rectangular shape, with $n$ rows and with entries in $[m] = \{1,\dots,m\}$. The set ${\rm SSYT}(n, [m])$ carries two important structures. First, a weight map ${\rm SSYT}(n, [m])$ to the weight lattice $P_{\mathfrak{g}}$ for $\mathfrak{g}$, which provides a notion of when one tableau has higher weight than another. Second, the structure of a commutative monoid, with multiplication ``$\cup$'' defined as follows: for $S,T \in {\rm SSYT}(n, [m])$, $S \cup T$ is the semistandard tableau whose $i$th row is the (multiset) union of the $i$th rows of $S$ and $T$, for $i=1,\dots,n$. 

For comparison with $U_q(\widehat{\mathfrak{g}})$-modules, we define a quotient monoid
${\rm SSYT}(n, [m],\sim)$ in which certain tableaux equal~1, mirroring the frozen Pl\"ucker coordinates which are trivialized in $\bbc[\Gr(n,m,\sim)]$. 
We use the notation $S \sim T$ to say that two tableaux are equal in this quotient. The weight map descends to 
${\rm SSYT}(n, [m],\sim)$. 

For $T \in {\rm SSYT}(n, [m])$ with columns $T_1,\dots,T_a$, let $P_T \in \bbc[\Gr(n,m)]$ denote the monomial in Pl\"{u}cker coordinates $P_{T_1} \cdots P_{T_a}$. The monomials $\{P_T\}$, where $T \in {\rm SSYT}(n,[m])$, are a basis for $\bbc[\Gr(n,m)]$ known as the {\sl standard monomial} basis \cite{Ses}. Thus for any simple module $L(M)$, $\Phi([L(M)])$ can be written as a linear combination of standard monomials. We show that one can define a ``top tableau''  
${\rm Top}(\Phi([L(M)])) \in {\rm SSYT}(n,[m],\sim)$ appearing with highest weight in such an expression. We denote the map $M  \mapsto {\rm Top}(\Phi([L(M)]))$ by $\widetilde{\Phi}$. 
\begin{theorem}[{Theorem \ref{thm: parametrization of simple modules by tableaux}}]\label{thm:first}
The map $\widetilde{\Phi} \colon \mathcal{P}^+_{\ell,A_{n-1}} \to {\rm SSYT}(n, [n+\ell+1], \sim) $ is an isomorphism of monoids. 
\end{theorem}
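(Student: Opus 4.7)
The plan is to verify that $\widetilde{\Phi}$ is a monoid homomorphism which bijects the generators of $\mathcal{P}^+_{\ell, A_{n-1}}$ with an appropriate generating set of the target. Recall that $\mathcal{P}^+_{\ell, A_{n-1}}$ is by definition the free abelian monoid on the $(n-1)(\ell+1)$ variables $Y_{i, i-2k-2}$, with $i \in [n-1]$ and $k \in [0,\ell]$.

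First I would compute $\widetilde{\Phi}$ on generators. By Hernandez-Leclerc's description of $\Phi$, each fundamental class $[L(Y_{i,p})]$ is sent to a single Pl\"ucker coordinate $P_{J(i,p)}$ for an explicit length-$n$ subset $J(i,p) \subset [m]$. Since $P_{J(i,p)}$ is already a standard monomial, the top tableau is simply $J(i,p)$ itself, so $\widetilde{\Phi}(Y_{i,p}) = J(i,p)$. A direct inspection of HL's formula shows that $Y_{i, i-2k-2} \mapsto J(i, i-2k-2)$ is a bijection from the generators of $\mathcal{P}^+_\ell$ onto the set of non-frozen length-$n$ columns in $[m]$.

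Next, to verify that $\widetilde{\Phi}$ is a monoid homomorphism, write $M = \prod_k Y_{\alpha_k}$ as a product of generators. In the Grothendieck ring,
\[
[L(M)] = \prod_k [L(Y_{\alpha_k})] - \sum_{N \prec M} c_N\, [L(N)],
\]
where $N$ ranges over dominant monomials of strictly lower highest weight, reflecting the decomposition of $\bigotimes_k L(Y_{\alpha_k})$ with $L(M)$ as its unique top composition factor. Applying $\Phi$ and expanding in the standard monomial basis,
\[
\Phi([L(M)]) = P_T + (\text{standard monomials of strictly lower weight}),
\]
where $T = \bigcup_k J(\alpha_k)$. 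The crucial point is that $P_T$ is the leading standard monomial of $\prod_k P_{J(\alpha_k)}$, which one proves by checking that each step of classical Pl\"ucker straightening only introduces terms of strictly smaller weight. Hence $\widetilde{\Phi}(M) = T = \bigcup_k \widetilde{\Phi}(Y_{\alpha_k})$, so $\widetilde{\Phi}$ is multiplicative.

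Finally, injectivity follows because distinct products $\prod_k Y_{\alpha_k}^{a_k}$ yield distinct tops $T$: the multiplicities $(a_k)$ can be recovered from the weight of $T$ together with its column content via the generator bijection. Surjectivity amounts to showing that every element of $\SSYT(n, [m], \sim)$ can be written as a union of non-frozen single columns in the quotient; one decomposes an arbitrary tableau into its columns, drops the trivialized frozens, and rewrites each wrapping frozen using a Pl\"ucker-derived monoid relation (as illustrated by $\{1, 3\} \cup \{2, 4\} \sim \{1, 4\}$ in $\SSYT(2, [4], \sim)$). I expect the main obstacle to be the leading-term analysis in the homomorphism step and the well-definedness of $\mathrm{Top}$; both rest on a careful identification between the weight map on $\SSYT(n, [m])$ and the highest-weight filtration on $K_0(\mathcal{C}_\ell)$ transported via $\Phi$, matching the partial order on dominant monomials to the tableau weight order under Pl\"ucker straightening.
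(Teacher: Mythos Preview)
Your overall strategy---show $\widetilde{\Phi}$ is a homomorphism and identify a generating set on each side---matches the paper's. But there is a genuine error in your identification of the target generators, and it propagates to your surjectivity argument.

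You claim that $\widetilde{\Phi}$ bijects the $Y_{i,i-2k-2}$ with the set of \emph{non-frozen} single columns in $[m]$. This is false: the image of the fundamental monomials consists precisely of the \emph{fundamental} columns, namely those of the form $[a,a+n]\setminus\{r\}$ with $r\in(a,a+n)$ (equivalently, single columns of gap weight exactly~$1$). There are $(n-1)(\ell+1)$ of these, matching the number of generators of $\mathcal{P}^+_{\ell}$. By contrast there are $\binom{m}{n}-m$ non-frozen columns, which is strictly larger as soon as $n\ge 3$ (e.g.\ in $\Gr(3,6)$ the column $\{1,3,5\}$ is non-frozen but has gap weight~$2$ and is not in the image of any $Y_{i,s}$). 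Consequently your surjectivity sketch (``decompose into columns, drop trivialized frozens, rewrite wrapping frozens'') misses the main point: one must show that \emph{every} column, in particular every non-frozen column of gap weight $\ge 2$, is $\sim$-equivalent to a $\cup$-product of fundamental columns. The paper proves this by an explicit inductive factorization (their Lemma on small-gaps representatives), establishing that ${\rm SSYT}(n,[m],\sim)$ is free on the fundamental columns. Your example $\{1,3\}\cup\{2,4\}\sim\{1,4\}$ is a correct instance of this factorization, but your surrounding prose does not recognize it as such.

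A secondary point: the paper defines ${\rm Top}$ with respect to the basis of $\bbc[\Gr(n,m,\sim)]$ consisting of standard monomials $P_T$ for $T$ with \emph{small gaps} (the full standard monomial set is not linearly independent in the quotient). Working in this basis, the homomorphism step becomes cleaner than your straightening argument: for small-gaps tableaux one has $P_S P_{T}=P_{S\cup T}$ on the nose (no straightening needed), so the leading term of a product of fundamental Pl\"ucker coordinates is immediate. Your straightening approach could presumably be made to work, but you should be explicit about which basis you are using in the quotient ring.
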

Therefore the finite-dimensional simple $U_q(\widehat{\mathfrak{g}})$-modules in $\mathcal{C}_{\ell}^{A_{n-1}}$ are also parametrized by ($\sim$-classes of) semistandard tableaux. We call a tableau $T$ {\sl real} (resp. {\sl prime}) if this is true of the corresponding module $L(\widetilde{\Phi}(T))$. We also show 
(cf.~Proposition~\ref{prop:posetsaresame}) that the map ${\rm SSYT}(n, [n+\ell+1]) \to \mathcal{P}^+_{\ell,A_{n-1}}$ respects familiar partial orders on both sides. 

One sense in which Theorem~\ref{thm:first} is interesting is that the monoid ${\rm SSYT}(n, [m])$ is not free, while the theorem asserts that ${\rm SSYT}(n, [m],\sim)$ {\sl is} free (on explicit generators). 

Table \ref{table:correspondence Gr36 and Uqsl3hat} illustrates the resulting correspondence between tableaux and modules. We describe tableaux by their column sets
and denote e.g. the module $L(Y_{1,-3} Y_{1,-1})$ by $1_{-3} 1_{-1}$. 

\begin{table}[ht]
\begin{equation*}
\hspace{2cm}
\begin{minipage}{0.5\textwidth}
\begin{tabular}{|c|c|}
\hline
tableau & module \\
\hline
 $[1,2,4]$ & $1_{-1}$   \\
\hline 
 $[1,2,5]$ & $1_{-3}1_{-1}$  \\
\hline
 $[1,2,6]$ & $1_{-5}1_{-3}1_{-1}$  \\
\hline
 $[1,3,4]$ & $2_{0}$  \\
\hline
 $[1,3,5]$ & $1_{-3}2_0$  \\
\hline
 $[1,3,6]$ & $1_{-5}1_{-3} 2_{0}$  \\
\hline
 $[1,4,5]$ & $2_{-2}2_{0}$  \\
\hline
 $[1,4,6]$ & $1_{-5} 2_{-2}2_{0}$  \\
\hline
 $[1,5,6]$ & $2_{-4}2_{-2}2_{0}$  \\
\hline
\end{tabular}
\end{minipage}
\hspace{-3cm}
\begin{minipage}{0.5\textwidth}
\begin{tabular}{|c|c|}
\hline
tableau & module  \\
\hline
 $[2,3,5]$ & $1_{-3}$  \\
\hline
 $[2,3,6]$ & $1_{-5} 1_{-3}$  \\
\hline
 $[2,4,5]$ & $2_{-2}$  \\
\hline
 $[2,4,6]$ & $1_{-5} 2_{-2}$  \\
\hline
 $[2,5,6]$ & $2_{-4}2_{-2}$  \\
\hline
 $[3,4,6]$ & $1_{-5}$  \\
\hline
 $[3,5,6]$ & $2_{-4}$  \\
\hline
$[1,2,4], [3,5,6]$ & $2_{-4}1_{-1}$  \\
\hline
$[1,3,5],[2,4,6]$ & $1_{-5}1_{-3} 2_{-2}2_{0}$  \\
\hline
\end{tabular}
\end{minipage}
\end{equation*}
\caption{Labeling prime simple modules in $\mathcal{C}_{2}^{A_2}$ by tableaux in ${\rm SSYT}(3,[6], \sim)$.}
\label{table:correspondence Gr36 and Uqsl3hat}
\end{table}

Via the correspondence we define elements $\ch(T) = \Phi([L(\widetilde{\Phi}^{-1}(T))]) \in \bbc[\Gr(n,m,\sim)]$, forming a basis for $\bbc[\Gr(n,m,\sim)]$. Making use of a well-known grading on $\bbc[\Gr(n,m)]$, for each tableau $T$ we define 
a homogeneous lift of $\ch(T)$ from $\bbc[\Gr(n,m,\sim)]$ to a localization of $\bbc[\Gr(n,m)]$ (a priori, the lifts might have frozen variables in the denominator, so they naturally live in a localization). 
By deep results of Kashiwara, Kim, Oh, and Park \cite{KKOP} and Qin \cite{Qin}, we have the following. 
\begin{theorem} [{Theorem \ref{thm: cluster monomials are tableaux}}]
Every cluster monomial (resp. cluster variable) in $\bbc[\Gr(n,m)]$ is of the form $\ch(T)$ for some real tableau (resp. prime real tableau) $T \in {\rm SSYT}(n, [m])$.
\end{theorem}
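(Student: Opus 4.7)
The plan is to combine the Hernandez-Leclerc isomorphism $\Phi$, the monoid isomorphism $\widetilde{\Phi}$ of Theorem \ref{thm:first}, and the cited theorems of Qin (for type $A$) and Kashiwara-Kim-Oh-Park on cluster monomials in $K_0(\mathcal{C}_\ell^{A_{n-1}})$. The substantive representation-theoretic content already lives in those cited results; the new step is the dictionary between tableaux, dominant monomials, and Grassmannian cluster data.

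I would begin by reducing the statement to the quotient ring $\bbc[\Gr(n,m,\sim)]$. Given a cluster monomial $x \in \bbc[\Gr(n,m)]$, its image $\bar{x}$ in the quotient is itself a cluster monomial (a cluster variable if $x$ was), because the quotient is formed by trivializing a subset of the frozens and the cluster structure descends. Under the isomorphism $\Phi^{-1}\colon \bbc[\Gr(n,m,\sim)] \to K_0(\mathcal{C}_\ell^{A_{n-1}})$, which identifies cluster structures, $\bar{x}$ corresponds to a cluster monomial $[S] \in K_0(\mathcal{C}_\ell^{A_{n-1}})$. Qin's theorem identifies $S$ as a simple module $L(M)$, and shows $L(M)$ is prime when $x$ is a cluster variable; the theorem of Kashiwara-Kim-Oh-Park furthermore guarantees $L(M)$ is real. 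Theorem \ref{thm:first} then provides a unique $T_0 \in {\rm SSYT}(n,[m],\sim)$ with $M = \widetilde{\Phi}^{-1}(T_0)$, and unwinding the definition yields $\bar{x} = \Phi([L(M)]) = \ch(T_0)$. With real and prime real tableaux defined via the corresponding modules, $T_0$ is real (prime real in the cluster-variable case).

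The final step is to pick a representative $T \in {\rm SSYT}(n,[m])$ of $T_0$ and to verify $x = \ch(T)$ as elements of (a localization of) $\bbc[\Gr(n,m)]$. For this I would use the standard multi-grading on $\bbc[\Gr(n,m)]$ in which each frozen Pl\"ucker coordinate $P_I$ carries its own grading degree. The cluster monomial $x$ is homogeneous in this multi-grading, and the homogeneous lift of $\ch(T_0)$ is defined so that its multi-degree matches $x$; since the quotient ring only imposes $P_I = 1$ for solid frozens $I$, specifying an element of the localization modulo those relations together with its multi-degree determines it uniquely. Adding or removing a column equal to a solid-frozen Pl\"ucker index in $T_0$ multiplies $\ch(T_0)$ by the corresponding frozen, so there is a unique representative $T$ making $\ch(T) = x$ as elements of $\bbc[\Gr(n,m)]$.

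The main obstacle I foresee is making this grading-and-lift step precise: one must check that the multi-grading separates the frozens finely enough to pin down the lift, that the lift uses only nonnegative powers of frozens so it lies in $\bbc[\Gr(n,m)]$ itself and not merely in the localization, and that the representative $T$ one selects is consistent with the claim in the statement. Once this bookkeeping is settled the theorem follows immediately from the cited theorems of Qin and Kashiwara-Kim-Oh-Park together with Theorem \ref{thm:first}.
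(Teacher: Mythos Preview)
Your approach is correct and essentially the same as the paper's: invoke the isomorphism $\Phi$, apply the cited results of Qin and Kashiwara--Kim--Oh--Park to identify each cluster monomial with a real simple (prime simple for cluster variables), then use the bijection $\widetilde{\Phi}$ of Theorem~\ref{thm:first} to extract the tableau.  The paper's formal proof (Theorem~\ref{thm: cluster monomials are tableaux}) is stated and proved only for the quotient ring $\bbc[\Gr(n,m,\sim)]$; the lift to $\bbc[\Gr(n,m)]$ that you spell out---using the $\ZZ^m$-grading and checking that no frozen denominators appear---is handled in the paper not inside the proof itself but scattered across Section~\ref{susbsecn:Grassmannians} (linear independence of the degrees of trivial frozens), Section~\ref{sec:mutations description} (the content of $T$ tracks the $\ZZ^m$-degree of the cluster variable along mutations), and Remark~\ref{rmk:clearing}.
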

We expect that the lift $\ch(T)$ always lies in $\bbc[\Gr(n,m)]$ (not in the localization), so that $\{\ch(T)\}_{T \in {\rm SSYT}(n,[m])}$ is a homogeneous basis for $\bbc[\Gr(n,m)]$ containing the cluster monomials. 
As the simplest example, if $T$ has a single column, then $\ch(T) = P_T$ is the Pl\"ucker coordinate given by the entries of $T$.

We translate a formula of Arakawa-Suzuki \cite{AS} (see also \cites{BaCi, Hen, LM}) in the setting of $p$-adic groups to our setting of quantum affine algebras and Grassmannians. We obtain an explicit formula for the $q$-character of a finite-dimensional simple module and also for $\ch(T)$:
\begin{theorem}[Theorems~\ref{cor:qcharacter formula} and \ref{cor:qcharacter formula tableaux}]
For a simple $U_q(\widehat{\mathfrak{sl}_n})$-module $L(M)$, the $q$-character of $L(M)$ is given by
\begin{align*}
\chi_q(L(M)) = \sum_{u \in S_k} (-1)^{\ell(uw_M)} p_{uw_0, w_Mw_0}(1) \prod_{M' \in {\rm Fund}_M(u\mu_M, \lambda_M)} \chi_q(L(M')),
\end{align*}
where $k$ is the degree of the monomial $M$, $w_0 \in S_k$ is the longest permutation, $w_M \in S_k$ is determined by $M$, ${\rm Fund}_M(u\mu_M, \lambda_M)$ is a subset of the variables $Y_{i,s}$, and $p_{y,y'}(t)$ is a Kazhdan-Lusztig polynomial \cite{KL}. 

For $T\in \text{SSYT}(n, [m])$ we have 
\begin{align*}
\ch(T) = \sum_{u \in S_k} (-1)^{\ell(uw_T)} p_{uw_0, w_Tw_0}(1) P_{u \cdot T'} \in \bbc[\Gr(n,m,\sim)],
\end{align*}
where $k$ is a certain {\sl gap weight} of $T$, $w_T \in S_k$ is determined by $T$, 
$T' \sim T$ is a tableau equivalent to $T$ that has {\sl small gaps} (cf.~Definition~\ref{defn:smallgaps}), and 
$P_{u;T'}$ is a standard monomial encoded by appropriately permuting entries of $T'$. 
\end{theorem}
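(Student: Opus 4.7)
The plan is to obtain the first ($q$-character) formula by transporting Arakawa--Suzuki's expansion for simple affine Hecke algebra modules of type $A$ across quantum affine Schur--Weyl duality, and then to deduce the second (Grassmannian) formula by applying the isomorphism $\Phi$ together with Theorem~\ref{thm:first}. To a dominant monomial $M = \prod_{j=1}^{k} Y_{i_j, s_j}$ of degree $k$ one attaches a reference ordering on the pairs $(i_j, s_j)$; the permutation $w_M \in S_k$ records how a chosen enumeration differs from this reference, the data $\lambda_M$ encodes multiplicities and $\mu_M$ encodes segment structure, and the subset ${\rm Fund}_M(u\mu_M, \lambda_M)$ consists of the variables obtained by reshuffling via $u$.

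For the first identity, I would invoke quantum affine Schur--Weyl duality (the Chari--Pressley / Drinfeld functor) to match $\mathcal{C}_\ell$ with a category of finite-dimensional modules of an affine Hecke algebra of type $A_{k-1}$. Under this functor, simple modules match simples and standard (parabolically induced) modules match tensor products of fundamental $U_q(\widehat{\mathfrak{sl}_n})$-modules. Arakawa--Suzuki's formula \cite{AS} expresses a simple Hecke module as an alternating sum over $u \in S_k$ of standards with Kazhdan--Lusztig coefficients $p_{uw_0, w_M w_0}(1)$ \cite{KL}; transporting this identity and taking $q$-characters gives the asserted formula, with the $q$-characters of the fundamental factors $L(Y_{i,s})$ computed by Frenkel--Mukhin.

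The tableau formula then follows by applying $\Phi$. By the very definition of $\ch(T)$, $\Phi([L(M)]) = \ch(T)$ for $T = \widetilde{\Phi}(M)$, and $\Phi$ sends a fundamental module $L(Y_{i,s})$ to the Pl\"ucker coordinate of the single-column tableau corresponding to $Y_{i,s}$ under $\widetilde{\Phi}$. Each tensor product on the right-hand side therefore becomes a product of Pl\"ucker coordinates, i.e.\ a standard monomial, which must be identified with $P_{u \cdot T'}$ for an appropriate recipe reading off this monomial from the reshuffling by $u$. The small-gap representative $T' \sim T$ of Definition~\ref{defn:smallgaps} is chosen precisely so that its columns are aligned with the reference ordering of the Frenkel--Reshetikhin indices of $M$, so that the action of $S_k$ on $T'$ is compatible with the action of $S_k$ indexing standards on the representation side.

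The main obstacle I anticipate is this combinatorial translation: one must verify that, under the correspondence $M \leftrightarrow T$ of Theorem~\ref{thm:first}, the permutation $w_M$ equals $w_T$ and the multiset of columns of $u \cdot T'$ coincides with the multiset ${\rm Fund}_M(u\mu_M, \lambda_M)$ of fundamentals. These identifications are dictated by, but not transparent from, the definition of $\widetilde{\Phi}$, and the small-gap normalization is essential because an arbitrary $\sim$-representative of $T$ need not have its columns in the reference order. Once this dictionary is verified, both displayed formulas reduce to a single Arakawa--Suzuki computation.
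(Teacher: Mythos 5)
Your proposal follows essentially the same route as the paper: the paper quotes the Arakawa--Suzuki expansion in the $p$-adic/affine Hecke algebra setting (Theorem~\ref{thm:multi-segment character formula}, via \cite{AS} and \cite{LM}), transports it through the Chari--Pressley equivalence \cite{CP96b} using the segment-to-fundamental-monomial dictionary \eqref{eq:multisegtomonom} to get the $q$-character formula, and then obtains the tableau formula by composing with Lemma~\ref{lem:fundamental modules and Plucker coordinates} and the small-gaps normalization of Definition~\ref{defn:usemicolonT}. The one caveat you flag (checking $w_M = w_T$) is in the paper a matter of definition rather than verification, since $\mu_T,\lambda_T,w_T$ are simply re-indexings of $\mu_M,\lambda_M,w_M$ under $\widetilde{\Phi}$; otherwise your outline matches the paper's argument.
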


A similar $q$-character formula in more geometric language is due to Ginzburg and Vasserot \cites{Vas,GV}. Our formula for $\ch(T)$ can be formulated as a {\sl Kazhdan-Lusztig immanant} \cite{RS}, which implies that $\ch(T) \geq 0$ on the totally nonnegative Grassmannian $\Gr(n,m)_{\geq 0}$. 

In particular, we obtain an explicit formula for Grassmannian cluster monomials, with the caveat that it is a difficult problem to determine when a given $T$ corresponds to a cluster monomial. Our formula is very closely related to the expression of $\ch(T)$ in the standard monomial basis. For example, if $T= T'$ has small gaps, then our formula {\sl is} the standard monomial expression. And for every tableau $T$, there is a small gaps tableau $T'$ such that $\ch(T)$ and $\ch(T')$ are related by a Laurent monomial in frozens. 

Our results are a step in developing the cluster combinatorics of $\CC[\Gr(n,m)]$, which is poorly understood when $n \geq 3$. Fomin and Pylyavskyy \cite{FP} suggested an approach to this combinatorics when $n=3$. They conjectured that each cluster monomial in $\CC[\Gr(3,m)]$ is a {\sl web invariant}, an element in a basis for $\CC[\Gr(3,m)]$ indexed by planar diagrams \cite{Kup}. Khovanov and Kuperberg gave a bijection between these diagrams and ${\rm SSYT}(3, [m])$ \cite{KK}. We conjecture that for cluster monomials, $\ch(T) \in \bbc[\Gr(3,m)]$ is the web invariant labeled by $T$.
We show that $\ch(T)$ is not {\sl always} a web invariant (cf.~Example~\ref{example:non-real tableaux}). We conjecture also that two cluster variables $\ch(T),\ch(T')$ lie in a common cluster only if $\ch(T)\ch(T')=\ch(T \cup T')$, which is in the spirit of \cite[Conjecture 9.2]{FP}.

Theorem \ref{thm: cluster monomials are tableaux} suggests how many aspects of Grassmannian clusters are controlled by the monoid ${\rm SSYT}(n, [m])$. For example, we highlight how $g$-vectors of cluster monomials can be computed in the monoid (Corollary \ref{corollary:correspondence between g vectors and real tableaux}), and how mutations of cluster variables and simple modules can be simulated in the monoid (cf.~Section \ref{sec:mutations description}). Similar ideas appeared implicitly in the work of Shen and Weng \cite{SW}, who showed that the {\sl theta basis} \cite{GHKK} for $\bbc[\Gr(n,m)]$ is parameterized by {\sl Gelfand-Tsetlin patterns}, a family of objects which are in well known bijection with semistandard Young tableaux.

An action of the extended affine braid group on Grassmannian cluster algebras was introduced in~\cite{Fra}. It yields an action on cluster monomials, hence on a subset of tableaux. We hope to give a concrete description of this action on {\sl all} tableaux, thus on the set of simple $U_q(\widehat{\mathfrak{g}})$-modules, in a future paper.  

We illustrate our results in a few examples, explained more fully in the main text. 
\begin{example}
Consider the simple $U_q(\widehat{\mathfrak{sl}_3})$-module $L(M)$, where $M=Y_{1,-5} Y_{1,-3} Y_{2,-2} Y_{2,0}$. Our $q$-character formula says that
\begin{align*}
\begin{split}
\chi_q(L(M)) & = - 1 + \chi_q(Y_{1,-1}) \chi_q(Y_{2,-4}) - \chi_q(Y_{2,0}) \chi_q(Y_{2,-2}) \chi_q(Y_{2,-4}) \\
& \quad - \chi_q(Y_{1,-1}) \chi_q(Y_{1,-3}) \chi_q(Y_{1,-5}) + \chi_q(Y_{2,0}) \chi_q(Y_{1,-3}) \chi_q(Y_{2,-2}) \chi_q(Y_{1,-5}).
\end{split}
\end{align*}

The tableau $T = \widetilde{\Phi}(M)$ associated to this module has  
\begin{align}\label{eq:inhomogeneous}
\ch(T) = \ch(\begin{ytableau}
1 & 2  \\
3 & 4 \\
5 & 6 
\end{ytableau}) & = - 1 + P_{124}P_{356} - P_{134}P_{245}P_{356} - P_{124}P_{235}P_{346} + P_{134}P_{235}P_{245}P_{346} \\
& = P_{1 3 5} P_{2 4 6} - P_{1 2 5} P_{3 4 6}- P_{1 3 4} P_{2 5 6}+ P_{1 2 4} P_{3 5 6}- 2  P_{1 2 3} P_{4 5 6} \label{eq:stdmonomialexpr}.
\end{align}
Here, $P_{j_1j_2j_3} \in \bbc[\Gr(3,6)]$ denotes a Pl\"ucker coordinate. The first line is an inhomogeneous expression for $\ch(T) \in \bbc[\Gr(3,6,\sim)]$ (in which $P_{123}=P_{234}=P_{345}=P_{456}=1$) obtained by translating the $q$-character formula term by term. The second line is its homogeneous lift $\ch(T) \in \bbc[\Gr(3,6)]$ expressed in the basis of standard monomials. This element of 
$\bbc[\Gr(3,6)]$ is a web invariant with diagram
\begin{tikzpicture}[scale=0.3]
\draw[thick] (0,0) circle (2 cm);
  \newdimen\R
   \R=2cm
   \foreach \x/\l/\p/\q in
     {120- 60/{1}/above/v1,
      120-120/{2}/right/v2,
      120-180/{3}/below/v3,
      120-240/{4}/below/v4,
      120-300/{5}/left/v5,
      120-360/{6}/above/v6
     }
     \node[inner sep=1pt,circle,draw,fill,label={\p:\l}] at (\x:\R) (\q) {};
     \node[inner sep=1pt,circle,draw,fill,label={above:6}] at (120-360:\R) (v6) {};
     \node[inner sep=1pt,circle,draw] at (-0.5,0.5) (p1) {};   
     \node[inner sep=1pt,circle,draw] at (0.5,0) (p2) {};
     \node[inner sep=1pt,circle,draw] at (0,1.5) (p3) {};
     \node[inner sep=1pt,circle,draw,fill] at (0,1) (p4) {};
     \draw (v1)--(p3);
     \draw (v6)--(p3);
     \draw (p4)--(p3);
     \draw (p4)--(p1);
     \draw (p4)--(p2);
     \draw (v5)--(p1); 
     \draw (v4)--(p1); 
     \draw (v2)--(p2); 
     \draw (v3)--(p2);
\end{tikzpicture}, and this web is labeled by $T$ in the Khovanov-Kuperberg bijection (see e.g~\cite[Section 3.2]{Tym}). This is one of the two non-Pl\"ucker cluster variables in $\CC[\Gr(3,6)]$ (it is $Y^{123456}$ in the notation of \cite{Sco}). The other non-Pl\"ucker cluster variable corresponds to the tableau with columns $[1,2,4],[3,5,6]$. See Examples~\ref{example:character of the tableau 135246}, \ref{example:chT is equal to web invariant} for more details. 
\end{example}

\begin{example}\label{eg:exchangeintro}
The following is an exchange relation in $\bbc[\Gr(3,8)]$: 
\begin{align}\label{eq:mutnexample}
& \ch(\begin{ytableau}
2  \\
3 \\
8 
\end{ytableau}) \ch(\begin{ytableau}
1 & 3 & 4  \\
2 & 5 & 6 \\
4 & 7 & 8 
\end{ytableau}) = \ch(\begin{ytableau}
1  \\
2 \\
8 
\end{ytableau}) \ch(\begin{ytableau}
3 & 4  \\
5 & 6 \\
7 & 8
\end{ytableau}) \ch(\begin{ytableau}
2  \\
3 \\
4 
\end{ytableau}) + \ch(\begin{ytableau}
3  \\
4 \\
8 
\end{ytableau}) \ch(\begin{ytableau}
2 & 4  \\
5 & 6 \\
7 & 8 
\end{ytableau}) \ch(\begin{ytableau}
1  \\
2 \\
3 
\end{ytableau}).
\end{align}
If one is trying to mutate the cluster variable $\ch(\begin{ytableau}
2  \\
3 \\
8 
\end{ytableau})$ using this exchange relation, then the ``new'' cluster variable 
$\ch(\begin{ytableau}
1 & 3 & 4  \\
2 & 5 & 6 \\
4 & 7 & 8 
\end{ytableau})$ can be computed from the other tableaux in the exchange relation using the weight map and the monoid 
{\rm SSYT}$(3,[8])$ (cf.~Section~\ref{sec:mutations description}). 
\end{example}

Our explicit formulas are an approach for determining reality and primeness of modules. A module $L(M)$ is real if and only if $\chi_q(L(M))^2  = \chi_q(L(M^2))$ and it is prime if and only if there are no $L(M'), L(M'') \ne \CC$ such that $\chi_q(L(M)) = \chi_q(L(M')) \chi_q(L(M''))$ (cf. Lemma~\ref{lem:qcharacter condition of real modules and prime modules}). 
There are analogous statements for tableaux using $\ch(T)$. 
For example, one can check that $\ch(T)$ is not a cluster monomial by checking that $\ch(T)^2 \neq \ch(T \cup T)$. If the Fomin-Pylyavskyy conjectures are proved, they would imply diagrammatic recipes for determining reality and primeness of $U_q(\widehat{\mathfrak{sl}_3})$-modules.

Recently, Brito and Chari \cite{BrCh} studied a category related to $\mathcal{C}_{\ell}$. They derived character formulas for the prime objects in the category and also the tensor product rules for these objects. 

We conclude by noting that it remains an open problem to find satisfactory descriptions of the following: first, the sets of real and prime tableaux; second, the condition for when two tableaux index compatible cluster variables; and third, a list of {\sl all} exchange relations, in the spirit of Example~\ref{eg:exchangeintro}. 

The paper is organized as follows. Section \ref{sec:preliminary} introduces Hernandez and Leclerc's approach to monoidal categorification of quantum affine algebras, and the relation with the cluster structure on $\Gr(n,m)$. Section \ref{sec:Uqg hat modules and tableaux} gives the relation between $U_q(\widehat{\mathfrak{g}})$-modules and semistandard Young tableaux, and establishes that every cluster monomial in a Grassmannian cluster algebra is of the form $\ch(T)$. Section \ref{sec:mutations description} describes mutations of cluster variables and modules in terms of semistandard Young tableaux. Section \ref{sec:qcharacter formulas} gives explicit formulas for $q$-characters and $\ch(T)$, and the relation with Kazhdan-Lusztig immanants. Section \ref{sec:Fomin Pylyavskyy conjecture} recalls Fomin and Pylyavskyy's conjectures and states a conjecture naturally extending theirs. Section \ref{sec:g vectors and dominant monomials and tableaux} highlights a tableau-theoretic rule for $g$-vectors in Grassmannian cluster algebras. Section \ref{sec: real modules} gives examples illustrating how our formulas can be used to test reality and primeness of modules, and compatibility of cluster variables.  

\subsection*{Acknowledgements}
The authors express their gratitude to Arkady Berenstein, Maxim Gurevich, Erez Lapid, and Evgeny Mukhin for helpful discussions. We are thankful to Greg Warrington for his Kazhdan-Lusztig code used in Section~\ref{sec: real modules}, and to Erez Lapid for his code which computed \eqref{eq: decomposition of tensor product 2}. We are thankful to Hiraku Nakajima for pointing us to the references \cites{GV,Vas}. W. Chang is supported by the National Natural Science Foundation of China (no. 11601295) and Shaanxi Normal University. B. Duan is supported by the National Natural Science Foundation of China (no. 11771191). W. Chang and B. Duan are supported by China Scholarship Council to visit Department of Mathematics at University of Connecticut and they thank Ralf Schiffler for hospitality during their visit. C. Fraser is supported by the NSF grant DMS-1745638. J.-R. Li is supported by the Minerva foundation with funding from the Federal German Ministry for Education and Research, by the Austrian Science Fund (FWF): M 2633-N32 Meitner Program, and by the European Research Council (ERC) under the European Union's Horizon 2020 research and innovation program (QUASIFT grant agreement 677368).

\subsection*{Notation}
For convenience of the reader, we collect key notation here. 

\begin{itemize}
\item $U_q(\widehat{\mathfrak{g}})$ the quantum affine algebra for $\mathfrak{g}$; almost everywhere we take $\mathfrak{g} =\mathfrak{sl}_n$.

\item $\mathcal{P} = \mathcal{P}_{A_{n-1}}$ the free abelian group in formal variables $Y_{i,s}^{\pm 1}$, $i \in I$, $s \in \ZZ$, $\mathcal{P}^+=\mathcal{P}^+_{A_{n-1}}$ the submonoid of $\mathcal{P}$ generated by $Y_{i,s}$, $i \in I$, $s \in \ZZ$, and $\mathcal{P}^+_\ell=\mathcal{P}^+_{\ell, A_{n-1}}$ the submonoid generated by $Y_{i,i-2k-2}$, $i \in I$, $k \in [0, \ell]$.

\item $\mathcal{C}=\mathcal{C}^{\mathfrak{g}}$ the category of all finite-dimensional $U_q(\widehat{\mathfrak{g}})$-modules; $\mathcal{C}_{\ell}=\mathcal{C}_{\ell}^{A_{n-1}}$ the subcategory defined in Section~\ref{definition of quantum affine algebras} and $K_0(\mathcal{C}_{\ell})$ its Grothendieck ring. 


\item $L(M)$ the simple $U_q(\widehat{\mathfrak{g}})$-module with highest $l$-weight $M$ and $\chi_q(M) = \chi_q(L(M))$ its $q$-character. 

\item $\Gr(n,m) \subset \mathbb{P}^{\binom m n-1}$ the Grassmannian of $n$-planes in $\mathbb{C}^m$ and $\bbc[\Gr(n,m)]$ its homogeneous coordinate ring; $\CC[\Gr(n,m,\sim)]$ the quotient of $\CC[\Gr(n,m)]$ by the Pl\"{u}cker coordinates with column set a consecutive interval; $P_{i_1, \ldots, i_n} \in \CC[{\rm Gr}(n, m)]$ a Pl\"ucker coordinate. 





\item $\Phi: K_0(\mathcal{C}_\ell) \overset{\cong}{\to} \CC[\Gr(n,n+\ell+1,\sim)]$ the isomorphism of Hernandez-Leclerc; $\widetilde{\Phi}$ the isomorphism of monoids in Theorem~\ref{thm: parametrization of simple modules by tableaux}. 


\item ${\rm SSYT}(n, [m])$ the monoid of rectangular semistandard Young tableaux with $n$ rows and with entries in $[m]$; ${\rm SSYT}(n, [m],\sim)$ the monoid of $\sim$-equivalence classes. 

\end{itemize}

\section{Hernandez-Leclerc's category and the Grassmannian} \label{sec:preliminary}
We review background of quantum affine algebras of type $A$ and their connection with Grassmannian cluster algebras. 
\subsection{Cluster algebras}
Cluster algebras were invented by Fomin and Zelevinsky \cite{FZ02}. We give a brief definition. 

For $m \in \ZZ_{\ge 1}$, we denote $[m]=\{1, \ldots, m\}$. 

A quiver $Q=(Q_0, Q_1, s, t)$ is a finite directed graph without loops or $2$-cycles, with vertex set $Q_0$, arrow set $Q_1$, and with maps $s,t: Q_1 \to Q_0$ taking an arrow to its source and target, respectively. We identify $Q_0 = [m] = \{1,\dots,m\}$. As part of the data of $Q$, one further declares vertices $1,\dots,n$ as {\sl mutable} and vertices $n+1,\dots,m$ as {\sl frozen}. 

For $k \in [n]$, the {\sl mutated quiver} $\mu_k(Q)$ is a quiver on the same vertex set and value of $n$, with arrows obtained as follows:
\begin{enumerate}
\item[(i)] for each sub-quiver $i \to k \to j$, add a new arrow $i \to j$,

\item[(ii)] reverse the orientation of every arrow with target or source equal to $k$,

\item[(iii)] remove the arrows in a maximal set of pairwise disjoint $2$-cycles. 
\end{enumerate}

Let $\mathcal{F}$ be an ambient field abstractly isomorphic to a field of rational functions in $m$ independent variables. A {\sl seed} in $\mathcal{F}$ is a pair $({\bf x}, Q)$, where ${\bf x} = (x_1, \ldots, x_m)$ form a free generating set of $\mathcal{F}$ and $Q$ is a quiver as above. 

The set ${\bf x}$ is the {\sl cluster} of the seed $({\bf x}, Q)$. The variables $x_1, \ldots, x_n$ are the {\sl cluster variables} for this seed, and 
the variables $x_{n+1}, \ldots, x_m$ are called {\sl frozen variables}. 

For a seed $({\bf x}, Q)$ and $k \in [n]$, the {\sl mutated seed} $\mu_k({\bf x}, Q)$ is  $({\bf x}', \mu_k(Q))$, where ${\bf x}' = (x_1', \ldots, x_m')$ with $x_j'=x_j$ for $j\ne k$ and $x_k' \in \mathcal{F}$ determined by
\begin{align*}
x_k' x_k = \prod_{\alpha \in Q_1, s(\alpha)=k} x_{t(\alpha)} + \prod_{\alpha \in Q_1, t(\alpha)=k} x_{s(\alpha)}.
\end{align*}

After making a choice of initial labeled seed, say that a seed is {\sl reachable} if it can be obtained from the initial seed by a finite sequence of mutations. One defines the {\sl clusters} (resp. {\sl cluster variables}) to be the clusters (resp. cluster variables) appearing in all reachable seeds. Two cluster variables are {\sl compatible} if they are in a common cluster. The {\sl cluster monomials} are the products of compatible cluster variables. The {\sl cluster algebra} is the $\mathbb{C}$-algebra generated by all cluster and frozen variables.

\subsection{Quantum affine algebras}\label{definition of quantum affine algebras}
Let $\mathfrak{g}$ be a simple Lie algebra and $I$ the indices of the Dynkin diagram of $\mathfrak{g}$. Let $C=(C_{ij})_{i,j\in I}$ be the Cartan matrix of $\mathfrak{g}$, where $C_{ij}=\frac{2 ( \alpha_i, \alpha_j ) }{( \alpha_i, \alpha_i )}$. There is a matrix $D=\diag(d_{i}\mid i\in I)$ with entries in $\mathbb{Z}_{>0}$ such that $B=DC=(b_{ij})_{i,j\in I}$ is symmetric. The matrix $D$ is an identity matrix in type $A$. 

Denote by $P = P_{\mathfrak{g}}$ the {\sl weight lattice} of $\mathfrak{g}$ and by $Q \subset P$ the {\sl root lattice} of $\mathfrak{g}$. The weight lattice is partially ordered via $\lambda \le \lambda'$ if and only if $\lambda' - \lambda$ is expressible as a nonnegative sum of positive simple roots. 

In this paper, we take $q$ to be a nonzero complex number which is not a root of unity. The {\sl quantum affine algebra} $U_q(\widehat{\mathfrak{g}})$ in Drinfeld's realization \cite{Dri} is generated by $x_{i, m}^{\pm}$ ($i\in I, m\in \mathbb{Z}$), $k_i^{\pm 1}$ $(i\in I)$, $h_{i, m}$ ($i\in I, m\in \mathbb{Z}\backslash \{0\}$) and central elements $c^{\pm 1/2}$, subject to certain relations.

\subsection{\texorpdfstring{Finite-dimensional modules and the category $\mathcal{C}_{\ell}^{A_{n-1}}$}{Finite-dimensional modules and the HL-subcategory}
}
In this section, we recall the standard facts about finite-dimensional $U_q(\widehat{\mathfrak{g}})$-modules and their $q$-characters, as well as Hernandez-Leclerc's category $\mathcal{C}_{\ell}$, see \cites{CP94, CP95a, FR, HL10}.

Let $\mathcal{C}$ be the category of finite-dimensional $U_q(\widehat{\mathfrak{g}})$-modules. In \cite{HL10}, \cite{HL16}, Hernandez and Leclerc introduced a full subcategory $\mathcal{C}_{\ell}$ ($\ell \in \mathbb{Z}_{\geq 0}$) of $\mathcal{C}$. We reproduce the definitinon for $\mathfrak{g}$ of type $A$. 

Let $\mathfrak{g}=\mathfrak{sl}_n$ and $I=[1,n-1]$ be the set of vertices of the Dynkin diagram of $\mathfrak{g}$. We fix $a \in \CC^{\times}$ and denote $Y_{i,s} = Y_{i,aq^s}$, $i \in I$, $s \in \ZZ$. Denote by $\mathcal{P} = \mathcal{P}_{A_{n-1}}$ the free abelian group generated by $Y_{i,s}^{\pm 1}$, $i \in I$, $s \in \ZZ$, denote by $\mathcal{P}^+ = \mathcal{P}_{A_{n-1}}^+$ the submonoid of $\mathcal{P}$ generated by $Y_{i,s}$, $i \in I$, $s \in \ZZ$, and denote by $\mathcal{P}^+_\ell= \mathcal{P}_{\ell,A_{n-1}}^+$ the submonoid of $\mathcal{P}^+$ generated by $Y_{i,i-2k-2}$, $i \in I$, $k \in [0, \ell]$. An object $V$ in $\mathcal{C}_{\ell}=\mathcal{C}_{\ell}^{A_{n-1}}$ is a finite-dimensional $U_q(\widehat{\mathfrak{g}})$-module which satisfies the condition: for every composition factor $S$ of $V$, the highest $l$-weight of $S$ is a monomial in $Y_{i, i-2k-2}$, $k \in [0, \ell]$, $i\in I$, \cite{HL10}. Simple modules in $\mathcal{C}_{\ell}$ are of the form $L(M)$ (cf. \cite{CP94}, \cite{HL10}), where $M \in \mathcal{P}_{\ell, A_{n-1}}^+$ and $M$ is called the highest \textit{$l$-weight} (or sometimes, {\sl loop-weight}) of $L(M)$. The elements of $\mathcal{P}^+$ are called {\sl dominant monomials}.

Denote by $K_0(\mathcal{C}_{\ell}^{A_{n-1}})$ the Grothendieck ring of $\mathcal{C}_{\ell}^{A_{n-1}}$. By a slight abuse of notation, sometimes we write $[L(M)]$ ($M \in \mathcal{P}^+$) in $K_0(\mathcal{C}_{\ell}^{A_{n-1}})$ as $L(M)$ or as $[M]$ and we refer to elements $\mathcal{R}_{\ell}^{A_{n-1}}$ merely as modules. 

Let $\mathbb{Z}\mathcal{P} = \mathbb{Z}[Y_{i, s}^{\pm 1}]_{i\in I, s\in \mathbb{Z}}$ be the group ring of $\mathcal{P}$. The $q$-{\sl character} of a $U_q(\widehat{\mathfrak{g}})$-module $V$ is given by (cf. \cite{FR})
\begin{align*}
\chi_q(V) = \sum_{M\in \mathcal{P}} \dim(V_{M}) M \in \mathbb{Z}\mathcal{P},
\end{align*}
where $V_M$ is an $l$-weight space of $M$. For a module $L(M)$, $M \in \mathcal{P}^+$, we also write $\chi_q(M) = \chi_q(L(M))$. 

We denote $\wt: \mathcal{P} \to P_{\mathfrak{g}}$ the group homomorphism defined by sending $Y_{i,a}^{\pm} \mapsto \pm \omega_i$, $i \in I$, where $\omega_i$'s are fundamental weights of $\mathfrak{g}$. For a finite-dimensional simple $U_q(\widehat{\mathfrak{g}})$-module $L(M)$, we write $\wt(L(M)) = \wt(M)$ and call it the highest weight of $L(M)$. 

Let $\mathcal{Q}$ be the subgroup of $\mathcal{P}$ generated (when $\mathfrak{g} = \mathfrak{sl}_n$) by
\begin{align} \label{eq:Aia}
A_{i,s} = Y_{i,s+1}Y_{i,s-1} \prod_{j \in I, |j-i|=1} Y_{j,s}^{-1}, \quad i\in I, \ s\in \mathbb{Z}.
\end{align}
Let $\mathcal{Q}^{\pm}$ be the monoids generated by $A_{i, a}^{\pm 1}, i\in I, a\in \mathbb{C}^{\times}$. There is a partial order $\leq$ on $\mathcal{P}$ (cf. \cites{FM,Nak00}) in which
\begin{align}
M \leq M' \text{ if and only if } M'M^{-1}\in \mathcal{Q}^{+}. \label{partial order of monomials}
\end{align}

A finite-dimensional $U_q(\widehat{\mathfrak{g}})$-module is called \textit{prime} if it is not isomorphic to a tensor product of two nontrivial $U_q(\widehat{\mathfrak{g}})$-modules (cf. \cite{CP97}). A simple $U_q(\widehat{\mathfrak{g}})$-module $M$ is {\sl real} if $M \otimes M$ is simple (cf. \cite{Lec}).

\subsection{\texorpdfstring{\text{Cluster structure on $K_0(\mathcal{C}_{\ell}^{A_{n-1}})$}}{Cluster structure on the Grothendieck ring}}\label{subsec:Grothendieck ring and Grassmannian}
Hernandez and Leclerc introduced monoidal categorifications of cluster algebras in \cites{HL10,HL16}. We recall the definition of Hernandez and Leclerc's cluster algebras introduced in \cite{HL16}, again in type $A_{n-1}$ only. Let $Q_{\ell}$ be a quiver with the vertex set $V_{\ell}=I \times  [0, \ell]$ (i.e., a rectangular grid), and with edge set: 
\begin{align*}
& (i,r) \to (j,r+1), \quad j-i=1, \\
& (i, r) \to (i, r-1), \\
& (i, r) \to (i-1, r).
\end{align*}
Let $\mathbf{z}=\{z_{i,t}: (i,t)\in V_{\ell}\}$ and let $\mathcal{A}_\ell=\mathcal{A}_{\ell}^{A_{n-1}}$ be the cluster algebra defined by the initial seed $(\mathbf{z}, Q_{\ell}^{A_{n-1}})$, where $z_{i, \ell}$, $i \in I$, are frozen variables. 

For $i \in I$, $s \in \mathbb{Z}$, $k \in \ZZ_{\ge 1}$, we denote
\begin{align}\label{eq:KR}
X_{i,k}^{(s)} = Y_{i,s} Y_{i,s+2} \cdots Y_{i,s+2k-2}.
\end{align}
The modules $L(X_{i,k}^{(s)})$ are called {\sl Kirillov-Reshetikhin modules} and their classes $[L(X_{i,k}^s]$ serve as initial cluster variables. When $k=1$, the modules $L(X_{i,1}^{(s)}) = L(Y_{i,s})$ are called {\sl fundamental modules}. 

It is shown in \cite{HL10}, \cite{HL16} that the assignments $z_{i,t} \mapsto L(X_{i, t+1}^{(i-2t-2)})$, $i \in I$, $t \in [0, \ell]$, extend to a ring isomorphism $\mathcal{A}_{\ell}^{A_{n-1}} \to \mathcal{R}_{\ell}^{A_{n-1}}$. Figure \ref{fig:initial cluster for Uqslhat5} is the initial cluster for $\mathcal{R}_{4}^{A_4}$. The copies of the trivial module $\mathbb{C}$ are not part of the initial cluster, but are only drawn for comparison with the initial cluster for the Grassmannian defined in the next section. Likewise, the quiver $Q_4^{A_4}$ is obtained from the one in Figure \ref{fig:initial cluster for Uqslhat5} by deleting these vertices. We identify throughout the elements of our initial cluster with entries in the $[n-1] \times  [0, \ell]$ rectangular grid.

\begin{figure}
\begin{tikzpicture}[scale=0.5]
     \node at (0,0) (v00) {\fbox{$\mathbb{C}$}};
     \node at (0,-4) (v10) {$X_{1,1}^{(-1)}$};
     \node at (0,-8) (v20) {$X_{1,2}^{(-3)}$};
     \node at (0,-12) (v30) {$X_{1,3}^{(-5)}$};
     \node at (0,-16) (v40) {$X_{1,4}^{(-7)}$};
     \node at (0,-20) (v50) {\fbox{$X_{1,5}^{(-9)}$}};
    
     \node at (4,-4) (v11) {$X_{2,1}^{(0)}$};
     \node at (4,-8) (v21) {$X_{2,2}^{(-2)}$};
     \node at (4,-12) (v31) {$X_{2,3}^{(-4)}$};
     \node at (4,-16) (v41) {$X_{2,4}^{(-6)}$};
     \node at (4,-20) (v51) {\fbox{$X_{2,5}^{(-8)}$}};
     
     \node at (8,-4) (v12) {$X_{3,1}^{(1)}$};
     \node at (8,-8) (v22) {$X_{3,2}^{(-1)}$};
     \node at (8,-12) (v32) {$X_{3,3}^{(-3)}$};
     \node at (8,-16) (v42) {$X_{3,4}^{(-5)}$};
     \node at (8,-20) (v52) {\fbox{$X_{3,5}^{(-7)}$}};
     
     \node at (12,-4) (v13) {$X_{4,1}^{(2)}$};
     \node at (12,-8) (v23) {$X_{4,2}^{(0)}$};
     \node at (12,-12) (v33) {$X_{4,3}^{(-2)}$};
     \node at (12,-16) (v43) {$X_{4,4}^{(-4)}$};
     \node at (12,-20) (v53) {\fbox{$X_{4,5}^{(-6)}$}};
     
     \node at (16,-4) (v14) {\fbox{$\mathbb{C}$}};
     \node at (16,-8) (v24) {\fbox{$\mathbb{C}$}};
     \node at (16,-12) (v34) {\fbox{$\mathbb{C}$}};
     \node at (16,-16) (v44) {\fbox{$\mathbb{C}$}};
     \node at (16,-20) (v54) {\fbox{$\mathbb{C}$}};
     
     \draw[->] (v10)--(v00);
     \draw[->] (v20)--(v10);
     \draw[->] (v30)--(v20);
     \draw[->] (v40)--(v30);
     \draw[->] (v50)--(v40);

     \draw[->] (v21)--(v11);
     \draw[->] (v31)--(v21);
     \draw[->] (v41)--(v31);
     \draw[->] (v51)--(v41);
     
     \draw[->] (v22)--(v12);
     \draw[->] (v32)--(v22);
     \draw[->] (v42)--(v32);
     \draw[->] (v52)--(v42);
     
     \draw[->] (v23)--(v13);
     \draw[->] (v33)--(v23);
     \draw[->] (v43)--(v33);
     \draw[->] (v53)--(v43);
     
     \draw[->] (v11)--(v10);
     \draw[->] (v12)--(v11);
     \draw[->] (v13)--(v12);
     \draw[->] (v14)--(v13);
     
     \draw[->] (v21)--(v20);
     \draw[->] (v22)--(v21);
     \draw[->] (v23)--(v22);
     \draw[->] (v24)--(v23);
     
     \draw[->] (v31)--(v30);
     \draw[->] (v32)--(v31);
     \draw[->] (v33)--(v32);
     \draw[->] (v34)--(v33);
     
     \draw[->] (v41)--(v40);
     \draw[->] (v42)--(v41);
     \draw[->] (v43)--(v42);
     \draw[->] (v44)--(v43);
     
     \draw[->] (v10)--(v21);
     \draw[->] (v21)--(v32);
     \draw[->] (v32)--(v43);
     \draw[->] (v43)--(v54);
     
     \draw[->] (v20)--(v31);
     \draw[->] (v31)--(v42);
     \draw[->] (v42)--(v53);
     
     \draw[->] (v30)--(v41);
     \draw[->] (v41)--(v52);
     
     \draw[->] (v40)--(v51);
     
     \draw[->] (v11)--(v22);
     \draw[->] (v22)--(v33);
     \draw[->] (v33)--(v44);
     
     \draw[->] (v12)--(v23);
     \draw[->] (v23)--(v34);
     
     \draw[->] (v13)--(v24);
     
\end{tikzpicture}
             \caption{The initial cluster for $\mathcal{R}_{4}^{A_4}$.}
             \label{fig:initial cluster for Uqslhat5}
\end{figure}
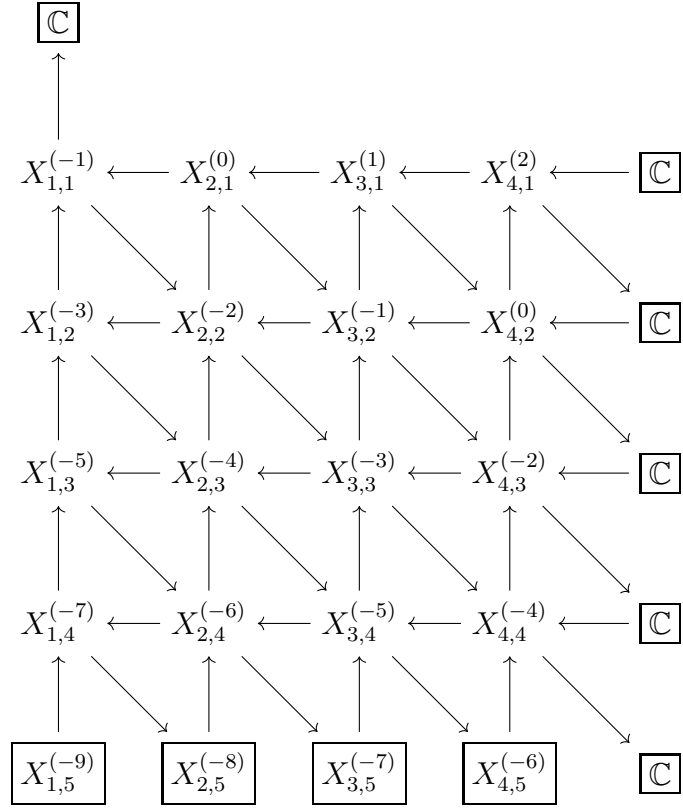
\subsection{Grassmannian cluster algbras}\label{susbsecn:Grassmannians}
Let $\Gr(n,m) \subset \mathbb{P}^{\binom m n-1}$ denote the Grassmannian of $n$-planes in $\mathbb{C}^m$, together with its Pl\"ucker embedding in projective space. Let $\CC[\Gr(n,m)]$ denote the homogeneous coordinate ring. This algebra is generated by Pl\"{u}cker coordinates 
\begin{align*}
P_{i_1, \ldots, i_{n}}, \quad 1 \leq i_1 < \cdots < i_{n} \leq m.
\end{align*}

Scott \cite{Sco} (see also \cite{GSV}) introduced a cluster algebra structure on $\CC[\Gr(n,m)]$. Setting $m = n + \ell+1$, this cluster algebra has initial seed $(Q'_{\ell}, {\bf z}')$, where $Q'_{\ell}$ is the quiver obtained from $Q_{\ell}^{A_{n-1}}$ by adding frozen vertices $(0, 0)$, $(n, t)$, $t \in [0, \ell]$ and adding arrows $(1,0) \to (0,0)$, $(n,t) \to (n-1, t)$, $(n-1, t) \to (n, t+1)$, $t \in [0, \ell-1]$,
and ${\bf z}'=\{ P_{[1, n-i] \cup [n-i+t+2, n+t+1]} : (i,t)\in V'\}$.

Let $\ZZ^m$ be the free abelian group with standard basis vectors $e_1,\dots,e_m$. There is a $\ZZ^m$-grading on the algebra $\CC[\Gr(n,m)]$ in which the Pl\"ucker coordinate $P_{i_1, \ldots, i_{n}}$ has $\ZZ^m$-degree $e_{i_1}+ \cdots + e_{i_n}$. It is well known every cluster monomial in $\CC[\Gr(n,m)]$, and moreover all exchange relations, are homogeneous with respect to this grading.

Denote by $\CC[\Gr(n,m,\sim)]$ the quotient of $\CC[\Gr(n,m)]$ by the inhomogeneous ideal 
\begin{align}\label{eq:idealdefn}
\langle P_{i,i+1, \ldots, i+n-1}-1, \quad i \in [m-n+1] \rangle.
\end{align}
We use the same notation $P_{i_1,\ldots,i_{n}}$ for the image of a Pl\"ucker coordinate in this quotient. We refer to the frozen Pl\"ucker coordinates appearing in \eqref{eq:idealdefn} as {\sl trivial frozens} to distinguish them from the frozens we do not specialize to~1. Deleting the trivial frozen variables from the initial seed for $\CC[\Gr(n,m)]$ yields a cluster structure on $\CC[\Gr(n,m,\sim)]$. The initial cluster for $\CC[\Gr(n,m,\sim)]$ is in Figure~\ref{fig:initial cluster for a quotient of Gr(5,10)}. 

Note that the $\ZZ^m$-degrees of the trivial frozens are linearly independent. Thus any Laurent monomial in these trivial frozen variables is determined by its $\ZZ^m$-degree. We use this idea later on to lift certain inhomogeneous formulas valid in $\CC[\Gr(n,m,\sim)]$ to homogeneous ones valid in $\CC[\Gr(n,m)]$.

\begin{figure}
\begin{tikzpicture}[scale=0.5]
     \node at (0,0) (v00) {\fbox{$P_{1,2,3,4,5}=1$}};
     \node at (0,-4) (v10) {$P_{1,2,3,4,6}$};
     \node at (0,-8) (v20) {$P_{1,2,3,4,7}$};
     \node at (0,-12) (v30) {$P_{1,2,3,4,8}$};
     \node at (0,-16) (v40) {$P_{1,2,3,4,9}$};
     \node at (0,-20) (v50) {\fbox{$P_{1,2,3,4,10}$}};
    
     \node at (4,-4) (v11) {$P_{1,2,3,5,6}$};
     \node at (4,-8) (v21) {$P_{1,2,3,6,7}$};
     \node at (4,-12) (v31) {$P_{1,2,3,7,8}$};
     \node at (4,-16) (v41) {$P_{1,2,3,8,9}$};
     \node at (4,-20) (v51) {\fbox{$P_{1,2,3,9,10}$}};
     
     \node at (8,-4) (v12) {$P_{1,2,4,5,6}$};
     \node at (8,-8) (v22) {$P_{1,2,5,6,7}$};
     \node at (8,-12) (v32) {$P_{1,2,6,7,8}$};
     \node at (8,-16) (v42) {$P_{1,2,7,8,9}$};
     \node at (8,-20) (v52) {\fbox{$P_{1,2,8,9,10}$}};
     
     \node at (12,-4) (v13) {$P_{1,3,4,5,6}$};
     \node at (12,-8) (v23) {$P_{1,4,5,6,7}$};
     \node at (12,-12) (v33) {$P_{1,5,6,7,8}$};
     \node at (12,-16) (v43) {$P_{1,6,7,8,9}$};
     \node at (12,-20) (v53) {\fbox{$P_{1,7,8,9,10}$}};
     
     \node at (17,-4) (v14) {\fbox{$P_{2,3,4,5,6}=1$}};
     \node at (17,-8) (v24) {\fbox{$P_{3,4,5,6,7}=1$}};
     \node at (17,-12) (v34) {\fbox{$P_{4,5,6,7,8}=1$}};
     \node at (17,-16) (v44) {\fbox{$P_{5,6,7,8,9}=1$}};
     \node at (17,-20) (v54) {\fbox{$P_{6,7,8,9,10}=1$}};
     
     \draw[->] (v10)--(v00);
     \draw[->] (v20)--(v10);
     \draw[->] (v30)--(v20);
     \draw[->] (v40)--(v30);
     \draw[->] (v50)--(v40);

     \draw[->] (v21)--(v11);
     \draw[->] (v31)--(v21);
     \draw[->] (v41)--(v31);
     \draw[->] (v51)--(v41);
     
     \draw[->] (v22)--(v12);
     \draw[->] (v32)--(v22);
     \draw[->] (v42)--(v32);
     \draw[->] (v52)--(v42);
     
     \draw[->] (v23)--(v13);
     \draw[->] (v33)--(v23);
     \draw[->] (v43)--(v33);
     \draw[->] (v53)--(v43);
     
     \draw[->] (v11)--(v10);
     \draw[->] (v12)--(v11);
     \draw[->] (v13)--(v12);
     \draw[->] (v14)--(v13);
     
     \draw[->] (v21)--(v20);
     \draw[->] (v22)--(v21);
     \draw[->] (v23)--(v22);
     \draw[->] (v24)--(v23);
     
     \draw[->] (v31)--(v30);
     \draw[->] (v32)--(v31);
     \draw[->] (v33)--(v32);
     \draw[->] (v34)--(v33);
     
     \draw[->] (v41)--(v40);
     \draw[->] (v42)--(v41);
     \draw[->] (v43)--(v42);
     \draw[->] (v44)--(v43);
     
     \draw[->] (v10)--(v21);
     \draw[->] (v21)--(v32);
     \draw[->] (v32)--(v43);
     \draw[->] (v43)--(v54);
     
     \draw[->] (v20)--(v31);
     \draw[->] (v31)--(v42);
     \draw[->] (v42)--(v53);
     
     \draw[->] (v30)--(v41);
     \draw[->] (v41)--(v52);
     
     \draw[->] (v40)--(v51);
     
     \draw[->] (v11)--(v22);
     \draw[->] (v22)--(v33);
     \draw[->] (v33)--(v44);
     
     \draw[->] (v12)--(v23);
     \draw[->] (v23)--(v34);
     
     \draw[->] (v13)--(v24);
     
\end{tikzpicture}
            \caption{The initial cluster for $\CC[\Gr(5,10,\sim)]$.}
            \label{fig:initial cluster for a quotient of Gr(5,10)}
\end{figure}

For $a,b,c \geq 0$ denote by $P^{(a, b, c)}$ the Pl\"{u}cker coordinate $P_{j_1, \ldots, j_n}$ where $j_{1}=b$, $j_k=j_{k-1}+1$ for $k \in [2, a] \cup [a+2, n]$, and $j_{a+1}-j_{a}=c$. Thus, $P^{(a, b, c)}$ consists of two intervals (one of size $a$ and the other of size $n-a$) separated by a gap of size 
$c$. 

\begin{theorem}[{\cite[Section 13]{HL10}}]\label{thm:Hernandez-Leclerc quantum affine algebras and Grassmannians}
The assignments 
\begin{align*}
L(X_{i,t+1}^{(i-2t-2)}) \mapsto P^{(n-i, 1, t+2)},  \quad i \in I, \ t \in [0, \ell], 
\end{align*}
extend to an algebra isomorphism $\Phi \colon K_0(\mathcal{C}_\ell) \to \CC[\Gr(n,n+\ell+1,\sim)]$, respecting cluster structures. 
\end{theorem}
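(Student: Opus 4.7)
The plan is to reduce the statement to a comparison of initial seeds on both sides, using Hernandez--Leclerc's categorification theorem to identify $K_0(\mathcal{C}_\ell)$ with $\mathcal{A}_\ell^{A_{n-1}}$, and then exhibiting the map at the level of seeds. More precisely, I would recall that the assignment $z_{i,t} \mapsto L(X_{i,t+1}^{(i-2t-2)})$, $i \in I$, $t \in [0, \ell]$, extends to a ring isomorphism $\mathcal{A}_\ell^{A_{n-1}} \to K_0(\mathcal{C}_\ell)$ (stated in Section~\ref{subsec:Grothendieck ring and Grassmannian} as a consequence of the results of \cites{HL10,HL16}). Hence it suffices to construct an isomorphism of cluster algebras $\mathcal{A}_\ell^{A_{n-1}} \to \CC[\Gr(n,n+\ell+1,\sim)]$ sending $z_{i,t}$ to $P^{(n-i, 1, t+2)}$.

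The key step is then a direct identification of initial seeds. First I would compare the two initial quivers: the quiver $Q'_\ell$ describing $\CC[\Gr(n, n+\ell+1)]$ is by definition obtained from $Q_\ell^{A_{n-1}}$ by adjoining the extra frozen vertices $(0,0)$ and $(n,t)$ for $t \in [0, \ell]$ together with the arrows $(1,0) \to (0,0)$, $(n,t) \to (n-1,t)$, and $(n-1,t) \to (n,t+1)$. Passing to the quotient $\CC[\Gr(n, n+\ell+1,\sim)]$ defined by the ideal \eqref{eq:idealdefn} removes precisely the frozen vertices corresponding to trivial Pl\"ucker coordinates $P_{i,i+1,\dots,i+n-1}$, and a direct inspection of Figures~\ref{fig:initial cluster for Uqslhat5} and \ref{fig:initial cluster for a quotient of Gr(5,10)} shows that the resulting mutable and non-trivial-frozen data on both sides coincide. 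Second, I would check that the proposed bijection on initial cluster variables is compatible: the mutable variable $z_{i,t}$ with $t<\ell$ is sent to the mutable Pl\"ucker coordinate $P^{(n-i,1,t+2)}$, and the frozen variable $z_{i,\ell}$ is sent to the non-trivial frozen $P^{(n-i,1,\ell+2)} = P_{1,\dots,n-i,\, n-i+\ell+2,\dots,n+\ell+1}$, which is indeed a frozen Pl\"ucker coordinate whose support wraps around modulo $m=n+\ell+1$.

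Once the initial seeds are matched, the cluster algebra structures agree by functoriality of mutation, so the map $\Phi$ extends uniquely to an isomorphism of cluster algebras respecting cluster structures. To upgrade this to an isomorphism of the ambient rings, one uses that $\CC[\Gr(n,m)]$ coincides with the Scott cluster algebra (so the same is true for its quotient), and that $K_0(\mathcal{C}_\ell)$ is generated by classes of Kirillov--Reshetikhin modules which all appear as cluster variables in $\mathcal{A}_\ell^{A_{n-1}}$ (cf.~Section~\ref{subsec:Grothendieck ring and Grassmannian}).

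The main obstacle I anticipate is bookkeeping: one must verify carefully that the trivialization of Pl\"ucker coordinates in \eqref{eq:idealdefn} matches exactly the "boundary" vertices labeled $\mathbb{C}$ in Figure~\ref{fig:initial cluster for Uqslhat5}, both in terms of which vertices are deleted and in terms of the induced arrows between the surviving vertices. Any arrow from a mutable vertex of $Q'_\ell$ to a trivialized frozen vertex must either disappear or be properly accounted for, and the resulting mutation rule at neighboring mutable vertices must coincide with the one prescribed by $Q_\ell^{A_{n-1}}$. Once this combinatorial matching is verified, the identification of initial seeds, together with the categorification result of Hernandez--Leclerc, delivers the theorem.
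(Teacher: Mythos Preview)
The paper does not prove this theorem; it is quoted from \cite[Section 13]{HL10} and used as a black box, so there is no proof in the paper to compare against. Your outline---identify $K_0(\mathcal{C}_\ell)$ with $\mathcal{A}_\ell^{A_{n-1}}$ via the Hernandez--Leclerc categorification, then match the initial seed of $\mathcal{A}_\ell^{A_{n-1}}$ with that of $\CC[\Gr(n,n+\ell+1,\sim)]$ after deleting the trivialized frozen vertices---is the correct strategy and is essentially how the result is established in \cite{HL10}.
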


\section{\texorpdfstring{Simple $U_q(\widehat{\mathfrak{g}})$-modules and tableaux}{Simple modules and tableaux}} \label{sec:Uqg hat modules and tableaux}
We introduce three structures (the weight map, dominance partial order, and monoid structure) on semistandard tableaux and then make the connection between these and simple $U_q(\widehat{\mathfrak{g}})$-modules. We end the section by defining the elements $\ch(T) \in \bbc[\Gr(n,m,\sim)]$ and comparing the partial order on tableaux with the partial order on dominant monomials. 

A {\sl semistandard Young tableau} is a Young tableau with weakly increasing rows and strictly increasing columns. For $n, m \in \ZZ_{\ge 1}$, we denote by ${\rm SSYT}(n, [m])$ the set of rectangular semistandard Young tableaux with $n$ rows and with entries in $[m]$ (with arbitrarly many columns). We denote the empty tableau by $\mathds{1}$ and consider it an element of ${\rm SSYT}(n, [m])$. The {\sl content} of a tableau $T$ is the vector $(\nu_1,\dots,\nu_m) \in \ZZ^m$, where $\nu_i$ is the number of $i$-filled boxes in $T$.

\subsection{Weight and partial order for tableaux} \label{sec:Weights of semistandard Young tableaux}
Semistandard tableaux with one column $T$ are in apparent bijection with Pl\"ucker coordinates $P$. We let $T_P$ be the tableau corresponding to a Pl\"ucker coordinate and $P_T$ be the Pl\"ucker coordinate corresponding to a single-column tableau $T$. Extending this, for tableau $T$ with columns $T_1, \ldots, T_k$, let $P_T = P_{T_1}\cdots P_{T_k}$ be the corresponding monomial in Pl\"ucker coordinates. This definition makes sense even when $T$ is not semistandard. A {\sl standard monomial} for $\CC[\Gr(n,m)]$ is one of the form $P_T$ for $T \in {\rm SSYT}(n,[m])$. The standard monomials are a basis for $\CC[\Gr(n,m)]$.

\begin{definition}\label{def:weight of product of Plucker coordinaes}
For a Pl\"{u}cker coordinate $P=P_{i_1, \ldots, i_n} \in \CC[\Gr(n,m)]$, the {\sl weight} of $P$ is
\begin{align*}
\wt(P) = \sum_{j=2}^{n} (i_j-i_{j-1}-1) \omega_{n-j+1} \in P_{\mathfrak{g}}
\end{align*}
where $\omega_k$'s are fundamental weights of $\mathfrak{g}$. We define also the {\sl gap weight} of $P$ to be $\sum_{j}(i_j-i_{j-1}-1)$, i.e. the image of $\wt(P)$ under the map $P_{\mathfrak{g}} \to \ZZ$ specializing all  $\omega_{j} \mapsto 1$. We additively extend the notions of weight and gap weight to monomials in Pl\"ucker coordinates, so that the weight of a product is the sum of the weights. We define the weight (resp. gap weight) of a tableau to be the weight (resp. gap weight) of $P_T$. (We define $\wt(\mathds{1}) = 0$).
\end{definition}

Using the partial order on the weight lattice $P_{\mathfrak{g}}$, one obtains a preorder on ${\rm SSYT}(n, [m])$. We now recall the definition of a partial order (sometimes called the {\sl dominance order on tableaux}) which closely matches the partial order on dominant monomials in $\mathcal{P}^+$. For computing exchange relations in the cluster algebra, it will turn out that one can use either this dominance order or the (weaker) weight order.

The definition of the partial order uses the more familiar dominance order on partitions. Let $\lambda = (\lambda_1,\dots,\lambda_\ell)$ with $\lambda_1 \geq \lambda_2 \ge \cdots \geq \lambda_\ell \geq 0$ be a partition, and $\mu = (\mu_1,\dots,\mu_\ell)$ another partition. Then $\lambda \geq \mu$ in dominance order if $\sum_{j \leq i}\lambda_j \geq \sum_{j \leq i}\mu_j$ for $i=1,\dots,\ell$. For a tableau $T$, let ${\rm sh}(T)$ denote the shape of $T$. For $i \in [m]$, let $T[i]$ denote the restriction of $T \in {\rm SSYT}(n,[m])$ to the entries in $[i]$. 

\begin{definition}\label{defn:dominanceontableaux}
For $T,T' \in {\rm SSYT}(n, [m])$ with the same content, we say that $T \geq T'$ if ${\rm sh}(T[i]) \geq {\rm sh}(T'[i])$ in the dominance order on partitions, for $i=1,\dots,m$. 
\end{definition}

In our proofs, we use the following description of cover relations in this poset: if $T \geq T'$, then there exists a sequence of tableaux $T = T_0 \geq T_1 \geq \dots \geq T_a = T'$, with successive terms in this sequence related by transposing the entries in a pair of boxes (this follows, e.g., by standardization and \cite[Proposition 2.3]{Cast} applied to tableaux of the same shape). 

\begin{lemma}\label{lem:dominanceimpliesweight}
If $T \geq T'$ (in the sense of Definition~\ref{defn:dominanceontableaux}) then $\wt(T) \geq \wt(T')$ in $P_{\mathfrak{g}}$. 
\end{lemma}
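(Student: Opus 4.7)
The plan is to invoke the cover-relation description of the dominance order recorded in the paragraph after Definition~\ref{defn:dominanceontableaux}: $T \geq T'$ implies a chain $T = T_0 \geq T_1 \geq \cdots \geq T_a = T'$ whose consecutive terms differ by transposing the entries in two boxes. Weight is additive along the chain, so it suffices to prove the statement for a single swap. Suppose accordingly that $T$ has entries $a < b$ at cells $(r,c)$ and $(r',c')$ which are exchanged in $T'$.

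The first step is to translate $T \geq T'$ into a constraint on the two rows involved in the swap. The shapes ${\rm sh}(T[i])$ and ${\rm sh}(T'[i])$ coincide outside the range $a \leq i < b$, and on this range each is obtained from the common shape ${\rm sh}(T[a-1])$ by adjoining a single box---in row $r$ for ${\rm sh}(T[i])$ and in row $r'$ for ${\rm sh}(T'[i])$. Comparing these two one-box augmentations of a common partition in the dominance order forces $r \leq r'$ (with equality exactly when the row sums are unchanged).

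The second step is to put $\wt$ into a workable form. Working in $P_{\mathfrak{sl}_n}$ with basis $\epsilon_1, \ldots, \epsilon_n$ (and relation $\sum \epsilon_i = 0$), the identity $\omega_k - \omega_{k-1} = \epsilon_k$ (with the convention $\omega_0 = \omega_n = 0$) together with an Abel-summation rearrangement of Definition~\ref{def:weight of product of Plucker coordinaes} gives
\begin{align*}
\wt([i_1, \ldots, i_n]) = \sum_{j=1}^n i_j\,\epsilon_{n-j+1} + c_n,
\end{align*}
where $c_n$ depends only on $n$. Summing over the $k$ columns of a rectangular tableau yields $\wt(T) = \sum_{r=1}^n R_r(T)\,\epsilon_{n-r+1} + k\,c_n$, where $R_r(T)$ is the sum of the entries in row $r$. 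Because $T$ and $T'$ have the same content and shape, the constants cancel in $\wt(T) - \wt(T')$, and only rows $r$ and $r'$ contribute, giving
\begin{align*}
\wt(T) - \wt(T') = (b-a)\bigl(\epsilon_{n-r'+1} - \epsilon_{n-r+1}\bigr).
\end{align*}

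By the first step $r \leq r'$, so writing $p = n - r' + 1 \leq q = n - r + 1$ and applying the telescoping identity $\epsilon_p - \epsilon_q = \sum_{i=p}^{q-1}\alpha_i$ exhibits $\wt(T) - \wt(T')$ as a nonnegative integer combination of simple roots, establishing $\wt(T) \geq \wt(T')$ in $P_{\mathfrak{g}}$. The main obstacle I anticipate is the first step: unwinding the definition of dominance on tableaux into the clean row-order constraint $r \leq r'$ requires careful bookkeeping of how ${\rm sh}(T[i])$ and ${\rm sh}(T'[i])$ evolve as $i$ sweeps across $\{a, \ldots, b-1\}$. Once that translation is secured, the rest reduces to the Abel-summation identity and a direct cancellation.
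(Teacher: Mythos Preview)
Your proof is correct and follows essentially the same route as the paper's: reduce to a single transposition via the cover-relation description, then compute $\wt(T)-\wt(T')$ and recognize it as a nonnegative combination of simple roots. The paper computes this difference directly in the $\omega_k$'s, whereas you pass to the $\epsilon$-basis via Abel summation; these are cosmetically different but equivalent. One minor imprecision: your claim that ${\rm sh}(T[i])$ is obtained from ${\rm sh}(T[a-1])$ by adjoining a \emph{single} box is not literally true (other entries in $[a,i]$ also contribute), but what matters---and what your argument actually needs---is that ${\rm sh}(T[i])$ and ${\rm sh}(T'[i])$ differ from each other by moving one box between rows $r$ and $r'$, which does yield $r\le r'$ exactly as you conclude.
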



\begin{proof}
It suffices to prove this when $T> T'$ are related by a transposition in a pair of boxes. Suppose $T$ is obtained from $T'$ by swapping an entry $y$ in row $i$ of $T$ with an entry $x$ in row $j$ of $T'$, with $i < j$. Then $y > x$, and $x,y$ are in different columns of $T'$. Let $x_U,x_D$ be entries above and below $x$ respectively and define $y_U,y_D$ similarly. The degenerate cases when $y$ is in the first row or $x$ is in the last row work by the same analysis (treating $\omega_0 = \omega_n = 0$). From the definition of $\wt$, performing the transposition we have that 
$$\wt(T) - \wt(T') = (y-x)(\omega_{n-i+1}-\omega_{n-i+2} +\omega_{n-j+2}-\omega_{n-j+1}),$$
and the second factor is the sum of positive simple roots $\alpha_{i}+ \cdots +\alpha_{j-1}$.  
\end{proof}


\subsection{The tableau monoid} \label{subsec:monoid of rectangular semistandard Young tableaux}
As in the introduction, for $S,T \in {\rm SSYT}(n, [m])$, we denote by $S \cup T$ the row-increasing tableau whose $i$th row is the union of the $i$th rows of $S$ and $T$ (as multisets). Note for instance that every $T \in {\rm SSYT}(n, [m])$ factors as the $\cup$-product of its columns. 

We call $S$ a {\sl factor} of $T$, and write $S \subset T$, if the $i$th row of $S$ is contained in that of $T$ (as multisets), for $i \in [n]$. In this case, we define $\frac{T}{S}=S^{-1}T=TS^{-1}$ to be the row-increasing tableau whose $i$th row is obtained by removing that of 
of $S$ from that of $T$ (as multisets), for $i \in [n]$.

\begin{defn}\label{defn:sim}
A tableau $T \in {\rm SSYT}(n, [m])$ is {\sl trivial} if $\wt(T) = 0 \in P_{\mathfrak{g}}$. That is, each entry of $T$ is one less than the entry below it. 

For any $T \in {\rm SSYT}(n, [m])$, we  denote by $T_{\text{red}} \subset T$ the semistandard tableau obtained by removing a maximal trivial factor from $T$. That is, $T_{\text{red}}$ is the tableau with the minimal number of columns such that $T = T_{\text{red}} \cup S$ for a trivial tableau $S$. For trivial $T$ one has $T_{\text{red}} = \mathds{1}$. For $S, T \in {\rm SSYT}(n, [m])$, define $S \sim T$ if $S_{\text{red}} = T_{\text{red}}$. It is clear that~``$\sim$'' is an equivalence relation. We denote by ${\rm SSYT}(n, [m],\sim)$ the set of $\sim$-equivalence classes.
\end{defn}

We use the same notation for a tableau $T$ and its equivalence class, writing either $T \in {\rm SSYT}(n, [m])$ or $T \in {\rm SSYT}(n, [m],\sim)$ when it is important to distinguish these.

\begin{example}
We illustrate the operations $\cup$ and $\sim$: 
\begin{align*}
\begin{ytableau}
1 & 3    \\
2 & 7 \\
6 & 11
\end{ytableau}
\cup
\begin{ytableau}
1 & 7   \\
2 & 9 \\
8 & 10
\end{ytableau} = 
\begin{ytableau}
1 & 1 & 3 & 7   \\
2 & 2 & 7 & 9 \\
6 & 8 & 10 & 11
\end{ytableau}
\text{ and }
\begin{ytableau}
1 \\
3 \\
6 
\end{ytableau} \sim 
\begin{ytableau}
1 & 2 & 3  \\
3 & 3 & 4 \\
4 & 5 & 6
\end{ytableau}.
\end{align*}
\end{example}

A commutative monoid $\mathcal{M}$ is called {\sl cancellative} if for every $a,b,c \in \mathcal{M}$, $ab=ac$ implies that $b=c$. Any such monoid embeds in its Grothendieck group $K_0(\mathcal{M})$, i.e. the set of ``fractions'' of elements of $\mathcal{M}$ (subject to the same equivalences of fractions one uses to define the rational numbers from the integers). 

\begin{lemma} \label{lem:SSYT is a monoid}
The set ${\rm SSYT}(n, [m])$, and also ${\rm SSYT}(n, [m], \sim)$, form a commutative cancellative monoid with the multiplication $``\cup$''.
\end{lemma}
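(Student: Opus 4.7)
The plan is to establish the monoid and cancellative properties for $\mathrm{SSYT}(n,[m])$ first, then transfer both to the quotient $\mathrm{SSYT}(n,[m],\sim)$. For $\mathrm{SSYT}(n,[m])$, commutativity and associativity of $\cup$, as well as the identity role of the empty tableau $\mathds{1}$, follow immediately from the corresponding properties of multiset union applied row by row. The only nontrivial check is that $S\cup T$ is actually semistandard: rows are weakly increasing by sorting, and for the strict column condition I would fix a value $c$ and row $i$, and by a double-counting argument compare the number of entries $\le c$ in rows $i$ and $i{+}1$ of $S\cup T$; the column condition applied separately to $S$ and $T$ forces the count in row $i{+}1$ to drop by at least the combined multiplicity of $c$ in row $i$, which is exactly what is needed to conclude that $a_{i+1,j}>a_{i,j}$ for every column $j$. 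Cancellativity then follows row by row, since the monoid of multisets is cancellative.

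To pass to the quotient, I would work with the equivalent characterization ``$S \sim T$ iff there exist trivial tableaux $A, B$ with $S \cup A = T \cup B$''. From this, $\sim$ is manifestly reflexive, symmetric, and transitive (by combining witnesses via $\cup$), and a congruence for $\cup$ (using the fact that trivial tableaux are closed under $\cup$ since weights add). To identify this with Definition~\ref{defn:sim}, I need $T_{\mathrm{red}}$ to be well defined: I would prove this by a confluence argument, showing that two distinct trivial columns $c_1\ne c_2$ both removable from $T$ can be removed in either order with the same result; each removal strictly decreases the column count, so Newman's lemma yields a unique normal form. Cancellativity in the quotient then follows by lifting: if $S\cup T \sim S\cup U$, then $S\cup T\cup A = S\cup U\cup B$ for some trivial $A,B$, and cancellation in $\mathrm{SSYT}(n,[m])$ gives $T\cup A = U\cup B$, i.e.\ $T\sim U$.

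The main obstacle I expect is the confluence step establishing well-definedness of $T_{\mathrm{red}}$, since this is the one place that really uses the semistandard structure (as opposed to mere multiset bookkeeping): one must first check that removing a trivial column from an SSYT again gives an SSYT, which comes down to observing that in an SSYT the rightmost occurrence of $v+i-1$ in row $i$ cannot be strictly to the left of the rightmost occurrence of $v+i$ in row $i{+}1$, lest the column strictness fail; once this is in hand, the diamond-lemma verification for two independent removals is routine.
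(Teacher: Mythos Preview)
Your proposal is correct. For closure of ${\rm SSYT}(n,[m])$ under $\cup$, you and the paper take different routes: the paper reduces to $T'$ a single column and performs a three-case analysis on where its two entries insert into adjacent rows of $T$, whereas your counting argument (the inequality between adjacent rows, namely that the number of entries $\le c$ in row $i{+}1$ is at most the number of entries $<c$ in row $i$, is additive under $\cup$) is cleaner and explains conceptually why the operation works. For everything else the paper simply declares ``the other results in the lemma are immediate,'' so your treatment of the quotient is strictly more careful: you correctly identify that uniqueness of $T_{\rm red}$ (implicit in Definition~\ref{defn:sim}) needs an argument, and your confluence proof via Newman's lemma is sound, as is your lifting argument for cancellativity in ${\rm SSYT}(n,[m],\sim)$. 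One small point: you justify closure of trivial tableaux under $\cup$ via additivity of weights (Lemma~\ref{lem:weightmaphomom}), which in the paper's ordering comes after this lemma; there is no actual circularity since that proof is independent, but it is just as easy to observe directly that the $\ell$-th column of $A\cup B$ is $[u_\ell,u_\ell+n-1]$ where $u_\ell$ is the $\ell$-th smallest starting value among the columns of $A$ and $B$.
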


\begin{proof}
We will prove that for $T, T' \in {\rm SSYT}(n, [m])$, we have $T \cup T' \in {\rm SSYT}(n, [m])$. The other results in the lemma are immediate. 

Denote by $T(i)$ the $i$th row of a tableau $T$. We need to prove that for any $i < j$, the $2$-row tableau with the first row $T(i) \cup T'(i)$ and the second row $T(j) \cup T'(j)$ is semistandard. It suffices to prove this when $T'$ has one column. We can write the $i, j$ rows of $T$ as
\begin{align*}
\begin{matrix}
a_1 & a_2 & \cdots & a_m \\
b_1 & b_2 & \cdots & b_m,
\end{matrix}
\end{align*}
and suppose $T'$ has entries $a'$ and $b'$ in rows $i$ and $j$. 
There are $k, l \in [0, m]$ such that $a_1 \le \cdots \le a_k \le a' \le a_{k+1} \le \cdots \le a_m$ and $b_1 \le \cdots \le b_l \le b' \le b_{k+1} \le \cdots \le b_m$. 

If $k=l$, then the $i,j$ rows of $T \cup T'$ form a $2$-row semistandard tableau. If $k > l$, then the $i,j$ rows of $T \cup T'$ are 
\begin{align*}
a_1 & a_2 & \cdots & a_l & a_{l+1} & a_{l+2} & \cdots & a_k & a' & a_{k+1} & \cdots & a_m \\
b_1 & b_2 & \cdots & b_l & b' & b_{l+1} & \cdots & b_{k-1} & b_{k} & b_{k+1} & \cdots & b_m.
\end{align*}
We have $a' < b' \le b_k$, $a_{l+1} \le a' < b'$, and for all $j \in [l+2, k]$, $a_j \le a' < b' \le b_{j-1}$. Therefore the $i,j$ rows of $T \cup T'$ form a $2$-row semistandard tableau. 

If $k < l$, then the $i,j$ rows of $T \cup T'$ are 
\begin{align*}
\begin{array}{cccccccccccc}
a_1 & a_2 & \cdots & a_k & a' & a_{k+1} & \cdots & a_{l-1} & a_l & a_{l+1} & \cdots & a_m \\
b_1 & b_2 & \cdots & b_k & b_{k+1} & b_{k+2} & \cdots & b_{l} & b' & b_{l+1} & \cdots & b_m.
\end{array}
\end{align*}
We have $a' \le a_{k+1} < b_{k+1}$, $a_{l} < b_l \le b'$, and for all $j \in [k+1, l-1]$, $a_j < b_j \le b_{j+1}$. Therefore the $i,j$ rows of $T \cup T'$ form a $2$-row semistandard tableau. 
\end{proof}

\begin{remark} A {\sl Gelfand-Tsetlin pattern} (abbreviated {\sl G-T pattern}) is a triangular array of nonnegative integers satisfying certain inequalities. The specific details are not important for our purposes. These inequalities are preserved under entrywise addition of G-T patterns, so the set of G-T patterns is naturally a monoid (as is well known). One can check that the standard bijection between semistandard Young tableaux and G-T patterns intertwines the monoid structure $\cup$ on tableaux with the entrywise addition of G-T patterns. Thus, the current paper could be phrased purely in terms of G-T patterns. We prefer tableaux because the translation to $\CC[\Gr(n,m)]$ is clearer. For example, a Pl\"ucker coordinate $P$ is directly the same data as a single-column tableau $T_P$, and webs are already naturally labeled by tableaux (cf.~Section~\ref{sec:Fomin Pylyavskyy conjecture}).  
\end{remark}

\begin{lemma}\label{lem:weightmaphomom}
The weight map $\wt \colon {\rm SSYT}(n,[m]) \to P_{\mathfrak{g}}$ is a homomorphism of monoids. 
\end{lemma}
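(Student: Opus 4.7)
The plan is a direct calculation that recasts $\wt(T)$ in terms of row sums, at which point additivity under $\cup$ becomes transparent. By definition, for a tableau $T$ with columns $T_1,\dots,T_k$ one has $\wt(T) = \sum_{j=1}^k \wt(P_{T_j})$, and Definition~\ref{def:weight of product of Plucker coordinaes} gives
$$\wt(P_{T_j}) \;=\; \sum_{i=2}^{n} (t_{i,j} - t_{i-1,j} - 1)\,\omega_{n-i+1},$$
where $t_{1,j} < \cdots < t_{n,j}$ are the entries of the $j$th column of $T$.

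Summing this over $j$ and interchanging the order of summation, I will rewrite
$$\wt(T) \;=\; \sum_{i=2}^{n} \omega_{n-i+1}\bigl(R_i(T) - R_{i-1}(T) - k\bigr),$$
where $R_i(T) := \sum_{j=1}^{k} t_{i,j}$ is the sum of the entries in row $i$ of $T$ and $k$ is the number of columns of $T$. The crucial feature of this rewriting is that $\wt(T)$ depends only on the row sums $R_i(T)$ and on the total column count $k$, not on how the entries are distributed into columns.

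Given this, for any $S,T \in {\rm SSYT}(n,[m])$ the definition of the product $\cup$ yields immediately that $R_i(S \cup T) = R_i(S) + R_i(T)$ for every $i \in [n]$, and the column count of $S \cup T$ is the sum of the column counts of $S$ and of $T$. Substituting into the row-sum formula and separating the two contributions gives $\wt(S \cup T) = \wt(S) + \wt(T)$. The empty tableau $\mathds{1}$ has $\wt(\mathds{1}) = 0 \in P_{\mathfrak{g}}$ by convention, so it plays the role of the identity.

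There is essentially no obstacle here, but the subtle point worth flagging is that the columns of $S \cup T$ are generally not obtained simply by concatenating the columns of $S$ with those of $T$ (as the earlier example with $[1,3,6]\cup [2,4,5]$ illustrates), so one cannot appeal to a naive identity of the form $P_{S \cup T} = P_S P_T$ inside $\CC[\Gr(n,m)]$. The row-sum rewriting circumvents this: although each individual column's contribution depends on the gaps between its consecutive entries, the telescoped sum only sees the row totals and the number of columns, both of which add under $\cup$.
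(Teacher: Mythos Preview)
Your proof is correct and takes essentially the same approach as the paper: both arguments reduce to the observation that the coefficient of each $\omega_{n-i+1}$ in $\wt(T)$ equals $R_i(T) - R_{i-1}(T) - k$, a quantity depending only on the row sums and the column count, both of which are additive under $\cup$. The paper carries this out one pair of adjacent rows at a time and phrases the key step as ``rearranging terms''; your explicit row-sum formula makes the same computation more transparent.
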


It follows that $\wt(T \cup T') = \wt(T)$ when $T'$ is trivial; thus ${\rm SSYT}(n, [m],\sim)$ is endowed with a weight map. 

\begin{proof}
Let $S,T \in {\rm SSYT}(n,[m])$ be given. Let $s_1,\dots,s_k$ be the entries in row $s-1$ of $S$, and $s'_1,\dots,s'_k$ be the entries directly beneath them. Let $t_1,\dots,t_j$ and $t'_1,\dots,t'_j$ be the elements in the corresponding rows of $T$. Write
$A = \{s_1,\dots,s_k,t_1,\dots,t_j\}$ as $a_1 \leq \cdots \leq a_{j+k}$ in sorted order, and likewise write 
$B = \{s'_1,\dots,s'_k,t'_1,\dots,t'_j\}$ as $b_1 \leq \cdots \leq b_{j+k}$. Then in $S \cup T$, this row contributes
$\sum_{i=1}^{k+j}(b_i - a_i-1)$ to the fundamental weight $\omega_{n-s}$. Rearranging terms, this agrees with $\sum_{i=1}^k (s'_i - s_i-1)+\sum_{i=1}^{j} (t'_i - t_i-1)$, which is the sum of the contributions from $\wt(S)$ and $\wt(T)$. 
\end{proof}

\subsection{Tableaux and modules} \label{subsec:tableaux and modules}
We begin making the connection between simple modules and tableaux. 

For starters, we describe the images $\Phi(L(M))$ for fundamental modules $L(M)$. For $(i,s) \in I \times (2 \ZZ_{\le 0} + i-2)$, denote by $P_{(i,s)} = P^{(n-i, \frac{i-s}{2}, 2)}$ the Pl\"ucker coordinate as defined just before Theorem~\ref{thm:Hernandez-Leclerc quantum affine algebras and Grassmannians}. Thus the index set of $P_{(i,s)}$ is an interval with an element removed, namely $[\frac{i-s}{2},\frac{i-s}{2}+n] \setminus \{\frac{i-s}{2}+n-i\}$. 
\begin{lemma}\label{lem:fundamental modules and Plucker coordinates}
For fundamental modules $L(Y_{i,s}) \in  \mathcal{C}_{\ell}^{A_{n-1}}$, $i \in I$, $s \in 2 \ZZ_{\le 0} + i-2$, we have $\Phi([L(Y_{i,s})]) = P_{(i,s)}$. Moreover, $\wt(Y_{i,s)} = \wt(P_{(i,s)}) = \omega_i$. 
\end{lemma}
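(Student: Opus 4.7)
The weight claim is a direct calculation: $\wt(Y_{i,s}) = \omega_i$ by definition of $\wt$ on $\mathcal{P}$, while $P_{(i,s)} = P^{(n-i,(i-s)/2,2)}$ has column indices consisting of two consecutive blocks separated by a single gap of size $c=2$ at position $n-i+1$, contributing $(c-1)\,\omega_{n-(n-i+1)+1} = \omega_i$ by Definition~\ref{def:weight of product of Plucker coordinaes}.

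For the identification $\Phi([L(Y_{i,s})]) = P_{(i,s)}$, I would induct on $k := (i-s)/2 \geq 1$. In the base case $k=1$, the module $L(Y_{i,i-2}) = L(X_{i,1}^{(i-2)})$ is an initial cluster variable in the HL cluster, and Theorem~\ref{thm:Hernandez-Leclerc quantum affine algebras and Grassmannians} directly gives $\Phi([L(Y_{i,i-2})]) = P^{(n-i,1,2)} = P_{(i,i-2)}$.

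For the inductive step I would invoke the level-$1$ $T$-system relation, valid in $K_0(\mathcal{C}_{\ell})$:
\[
[L(Y_{i,s})]\cdot[L(Y_{i,s+2})] = [L(X_{i,2}^{(s)})] + [L(Y_{i-1,s+1})]\cdot[L(Y_{i+1,s+1})],
\]
with the boundary conventions $[L(Y_{0,\cdot})]=[L(Y_{n,\cdot})]=1$. Ordering the pairs $(i,s)$ lexicographically by $k$ first and by $i$ second (and sweeping $i$ in both directions $1,\ldots,n-1$ and $n-1,\ldots,1$ to cover both neighbor terms), the induction hypothesis combined with Theorem~\ref{thm:Hernandez-Leclerc quantum affine algebras and Grassmannians} determines $\Phi([L(Y_{i,s})])$ uniquely from the image of this recursion. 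To conclude I would verify the matching three-term Pl\"ucker identity on the Grassmannian side:
\[
P_{(i,s)}\cdot P_{(i,s+2)} \;\equiv\; P^{(n-i,1,k+1)} \;+\; P_{(i-1,s+1)}\cdot P_{(i+1,s+1)} \pmod{\text{trivial frozens}},
\]
which follows from a standard short Pl\"ucker relation applied to an $(n-2)$-element common core with four variable indices, the ``extra'' trivial frozen factors on each side cancelling in $\CC[\Gr(n,n+\ell+1,\sim)]$.

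The main obstacle is the combinatorial bookkeeping needed to match the $T$-system relation term-by-term with the short Pl\"ucker relation: one must select the common $(n-2)$-core and four variable indices so that the two equations align exactly, verify that the trivial-frozen factors produced by both sides coincide with the elements quotiented in the definition of $\CC[\Gr(n,n+\ell+1,\sim)]$, and handle the boundary cases $i \in \{1,n-1\}$ (where a neighbor factor degenerates to $1$) as well as the case when $[L(X_{i,2}^{(s)})]$ itself requires a nested induction because it is not in HL's initial cluster. The underlying $T$-system and Pl\"ucker identities are both standard, so the substance of the proof is the matching rather than any deep input.
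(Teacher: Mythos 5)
Your proposal is correct in outline and follows essentially the same route as the paper: the base case is read off from the initial seed of Theorem~\ref{thm:Hernandez-Leclerc quantum affine algebras and Grassmannians}, the inductive step matches the width-one $T$-system against precisely the three-term Pl\"ucker relation you describe (a common $(n-2)$-element core with four moving indices, the leftover consecutive-interval factor being a trivial frozen equal to $1$ in $\CC[\Gr(n,n+\ell+1,\sim)]$), and the weight claim is the same direct computation from Definition~\ref{def:weight of product of Plucker coordinaes}. The only differences are presentational: the paper takes $\Phi([L(Y_{i,s}Y_{i,s-2})])=P^{(n-i,\frac{i-s}{2},3)}$ directly ``by the definition of $\Phi$'' rather than via the nested induction you anticipate for the Kirillov--Reshetikhin term, and its induction is simply on $s$ with the $i$-ordering left implicit (only the descending sweep in $i$ is actually needed, since $Y_{i+1,s-1}$ is the neighbor at the same level while $Y_{i-1,s-1}$ is already known from the previous level).
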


Table \ref{table:correspondence Plucker coordinates and fundamental modules sl3} illustrates the correspondence $Y_{i,s} \mapsto P_{(i,s)}$ for $\mathcal{C}_{9}^{A_2}$. We call the Pl\"ucker coordinates $P_{(i,s)}$ arising in this correspondence {\sl fundamental Pl\"ucker coordinates}, and call a single-column tableau $T$ {\sl fundamental} if $P_T$ is. The fundamental tableaux are those with one column and with gap weight exactly equal to~1. They play an important role in what follows. 

\begin{proof}
The modules $L(Y_{i,s})$ satisfy the following $T$-{\sl system} relations \cite{Her}:
\begin{align*}
[L(Y_{i,s})] [L(Y_{i,s-2})] = [L(Y_{i,s} Y_{i, s-2})] + [L(Y_{i-1, s-1})] [L(Y_{i+1, s-1})].
\end{align*}

On the other hand, by the Pl\"ucker relations, one has 
\begin{align*}
P_{(i,s)} P_{(i,s-2)} & = P^{(n-i, \frac{i-s}{2}, 2)} P^{(n-i, \frac{i-s}{2}+1, 2)} \\
& = P^{(n-i, \frac{i-s}{2}, 3)} P_{j_1+1, j_1+2, \ldots, j_{n}} + P^{(n-i+1, \frac{i-s}{2}, 2)} P^{(n-i-1, \frac{i-s}{2}+1, 2)} \\
& = P^{(n-i, \frac{i-s}{2}, 3)}  + P_{(i-1, s-1)} P_{(i+1,s-1)},
\end{align*}
since $P_{j_1+1, j_1+2, \ldots, j_{n}}=1$, where $j_1 = \frac{i-s}{2}$. 

By the definition of $\Phi$, for the Kirillov-Reshetikhin module $L(Y_{i,s} Y_{i, s-2})$, we have that $\Phi(L(Y_{i,s} Y_{i, s-2})) = P^{(n-i, \frac{i-s}{2}, 3)}$. 

We now prove the result by induction on $s$. By the definition of $\Phi$, we have $\Phi(L(Y_{i,s})) = P_{(i,s)}$ for $i \in I$, $s = i-2$. Suppose that $\Phi(L(Y_{i,s})) = P_{(i,s)}$ for $i \in I$, $s \in 2 \ZZ_{\le 0} + i-2$. Then 
\begin{align*}
\Phi(L(Y_{i,s-2})) & = \Phi(L(Y_{i,s}))^{-1} (\Phi(L(Y_{i,s} Y_{i, s-2})) + \Phi(L(Y_{i-1, s-1})) \Phi(L(Y_{i+1, s-1})) ) \\
& = P_{(i,s)}^{-1}( P^{(n-i, \frac{i-s}{2}, 3)} + P_{(i-1, s-1)} P_{(i+1,s-1)} ) \\
& = P_{(i, s-2)}.
\end{align*}
The statement about weights follows from the definitions $\wt(Y_{i,s}) = \omega_i$ and Definition~\ref{def:weight of product of Plucker coordinaes}. 
\end{proof}

\begin{example}
In $\CC[\Gr(3,5,\sim)]$, the Pl\"{u}cker relation
\begin{align*}
& P_{124} P_{235} = P_{125} P_{234} + P_{123} P_{245} = P_{125} + P_{245}
\end{align*}
corresponds to $[Y_{1,-1}][Y_{1,-3}]=[Y_{1,-3} Y_{1,-1}]+[Y_{2,-2}]$ in the $T$-system of type $A_2$. 

In $\CC[\Gr(4,6,\sim)]$, the Pl\"{u}cker relation
\begin{align*}
& P_{1235} P_{2346} = P_{1236} P_{2345} + P_{1234} P_{2356} = P_{1236} + P_{2356}
\end{align*}
corresponds to $[Y_{1,-1}][Y_{1,-3}]=[Y_{1,-3} Y_{1,-1}]+[Y_{2,-2}]$ in the $T$-system of type $A_3$.
\end{example}

\begin{table}
\begin{equation*}
\hspace{3cm}
\begin{minipage}{0.5\textwidth}
\begin{tabular}{|c|c|}
\hline
modules & Pl\"{u}cker \\
\hline
 $Y_{1,-1}$ & $P_{1, 2, 4}$ \\ 
\hline
 $Y_{1,-3}$ & $P_{2, 3, 5}$ \\ 
\hline
  $Y_{1,-5}$ & $P_{3, 4, 6}$ \\ 
\hline
   $Y_{1,-7}$ & $P_{4, 5, 7}$ \\ 
\hline
  $Y_{1,-9}$ & $P_{5, 6, 8}$ \\ 
\hline
   $Y_{1,-11}$ & $P_{6, 7, 9}$ \\ 
\hline
    $Y_{1,-13}$ & $P_{7, 8, 10}$ \\
\hline
     $Y_{1,-15}$ & $P_{8, 9, 11}$ \\  
\hline
     $Y_{1,-17}$ & $P_{9, 10, 12}$ \\ 
\hline
      $Y_{1,-19}$ & $P_{10, 11, 13}$ \\
\hline
\end{tabular}
\end{minipage}
\hspace{-1cm}
\begin{minipage}{0.5\textwidth}
\begin{tabular}{|c|c|}
\hline
module & Pl\"{u}cker \\
\hline
 $Y_{2,0}$ & $P_{1, 3, 4}$ \\ 
\hline
 $Y_{2,-2}$ & $P_{2, 4, 5}$ \\ 
\hline
 $Y_{2,-4}$ & $P_{3, 5, 6}$ \\ 
\hline
 $Y_{2,-6}$ & $P_{4, 6, 7}$ \\ 
\hline
 $Y_{2,-8}$ & $P_{5, 7, 8}$ \\ 
\hline
 $Y_{2,-10}$ & $P_{6, 8, 9}$ \\ 
\hline
 $Y_{2,-12}$ & $P_{7, 9, 10}$ \\ 
\hline
 $Y_{2,-14}$ & $P_{8, 10, 11}$ \\ 
\hline
 $Y_{2,-16}$ & $P_{9, 11, 12}$ \\ 
\hline
 $Y_{2,-18}$ & $P_{10, 12, 13}$ \\ 
\hline
\end{tabular}
\end{minipage}
\end{equation*}
\caption{Correspondence between fundamental modules in $\mathcal{C}_{9}^{A_2}$ and fundamental Pl\"{u}cker coordinates in $\CC[\Gr(3,13)]$.}
\label{table:correspondence Plucker coordinates and fundamental modules sl3}
\end{table}

Now we make what turns out to be an important definition.
\begin{definition}\label{defn:smallgaps}
A tableau $T \in {\rm SSYT}(n,[m])$ has {\sl small gaps} if each of its columns has gap weight exactly~1. The tableau has {\sl nonlarge gaps} if each of its columns has gap weight at most~1. 
\end{definition} 

To reiterate, $T$ has small gaps if each of its columns is a fundamental tableau, i.e. if each of its columns has content $[i,i+n] \setminus \{r\}$ for $r \in (i,i+n)$. It has nonlarge gaps if each of its columns is either fundamental or trivial.

We lexicographically order the single-column tableaux with gap weight $\leq 1$, e.g. 
$$\begin{ytableau}
1 \\
2 \\
3 
\end{ytableau} < \begin{ytableau}
1 \\
2 \\
4 
\end{ytableau}
< 
\begin{ytableau}
1 \\
3 \\
4 
\end{ytableau}
< 
\begin{ytableau}
2 \\
3 \\
4 
\end{ytableau} <
\begin{ytableau}
2 \\
3 \\
5 
\end{ytableau}<
\begin{ytableau}
2 \\
4 \\
5 
\end{ytableau}
< 
\begin{ytableau}
3 \\
4 \\
5 
\end{ytableau} \in {\rm SSYT}(3,[5]).$$

\begin{lemma}\label{lem:nosorting}
If $S,T \in {\rm SSYT}(n,[m])$ have small gaps, then the columns of the monoid product $S \cup T$ are exactly the columns of $S$ union the columns of $T$ (as multisets), sorted in the above lexicographic order. The standard monomials $P_S,P_T \in \bbc[\Gr(n,m)]$ satisfy $P_SP_T = P_{S \cup T}$. Both statements remain true if we replace {\sl small gaps} with {\sl nonlarge gaps}. 
\end{lemma}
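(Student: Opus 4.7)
The plan is to reduce both assertions to a single monotonicity property of nonlarge gaps columns under the lexicographic order. Explicitly, I would prove the following claim: if $C$ and $C'$ are two nonlarge gaps columns with $C\le C'$ in lex order, and their top-to-bottom entries are $c_1<\cdots<c_n$ and $c'_1<\cdots<c'_n$, then $c_i\le c'_i$ for every $i\in [n]$. When $c_1<c'_1$, the nonlarge gaps hypothesis gives the easy bounds $c_i\le c_1+i$ and $c'_i\ge c'_1+i-1$, which combined with $c_1+1\le c'_1$ yield $c_i\le c_1+i\le c'_1+i-1\le c'_i$. When $c_1=c'_1$, both columns lie in the interval $[c_1,c_1+n]$ and each is determined by which element of this interval is missing (a trivial column corresponds to the missing element sitting at the right endpoint). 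Reading off entries explicitly, one checks that the lex-smaller column has its missing element in a weakly later position, and the inequality $c_i\le c'_i$ then holds coordinate by coordinate.

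With the monotonicity in hand, let $U$ denote the tableau whose columns are the columns of $S$ and of $T$ (with multiplicity) placed side-by-side in lex order. Monotonicity implies that the rows of $U$ are weakly increasing; each column of $U$ is strictly increasing since it is a column of $S$ or of $T$; hence $U$ is semistandard. By construction, the multiset of entries in row $i$ of $U$ is the multiset union of row $i$ of $S$ and row $i$ of $T$, which is precisely the $i$-th row of $S\cup T$. Since a row-increasing tableau is determined by its row multisets (one simply sorts), $U=S\cup T$, proving the first assertion of the lemma. The identity $P_SP_T=P_{S\cup T}$ is then immediate: each $P_T$ is by definition the product of Pl\"ucker coordinates over the columns of $T$, Pl\"ucker coordinates commute, and we have just identified the multiset of columns of $S\cup T$ with the union of the columns of $S$ and $T$.

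The only real obstacle is the bookkeeping in the $c_1=c'_1$ case of the monotonicity claim, since trivial columns and small gaps columns must be handled uniformly; still, each subcase reduces to comparing explicit entries of the two columns, so the verification is direct. It is worth noting that the nonlarge gaps hypothesis is essential to the argument: for columns of gap weight $\ge 2$ the lex sort need not produce a semistandard tableau, as already seen by trying the columns $[1,2,5]$ and $[2,3,4]$ in ${\rm SSYT}(3,[5])$.
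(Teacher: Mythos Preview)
Your proposal is correct and follows essentially the same approach as the paper. The paper's proof is a single sentence asserting that ``there is no sorting along rows'' when the columns are placed in lex order; your monotonicity claim is precisely a careful justification of this assertion, and the rest of your argument unwinds the immediate consequences exactly as the paper intends.
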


In particular, the set of small gaps tableaux is stable under the monoid product $\cup$, and the set of small gaps standard monomials $P_S$ is stable under multiplication. The $\CC$-linear span of these standard monomials is therefore a polynomial subalgebra of $\CC[\Gr(n,m)]$. The same statements hold with ``small'' replaced by ``nonlarge.'' 
 
\begin{proof}
All statements follow from noting that for tableaux with small gaps (resp., with nonlarge gaps),  when calculating the monoid product $T \cup S$ in lexicographically ordered fashion, there is no sorting along rows. 
\end{proof}

\begin{lemma}\label{lem:plucker and minimal affinization}
Every tableau $T \in {\rm SSYT}(n,[m])$ is $\sim$-equivalent to a unique $T' \in {\rm SSYT}(n,[m])$ with small gaps (for trivial $T$ we understand $T' = \mathds{1}$). If $T$ has gap weight $k$, then $T'$ has $k$ columns.
\end{lemma}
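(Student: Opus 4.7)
The plan is to prove existence and uniqueness separately, both via reductions to the single-column case.

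For existence, since $\sim$ is a monoid congruence and every tableau factors as a $\cup$-product of its columns (Lemma~\ref{lem:SSYT is a monoid}), it suffices to treat a single column $C$ with entries $i_1<i_2<\cdots<i_n$ and gap weight $k = i_n-i_1-n+1$. Let $m_1<m_2<\cdots<m_k$ enumerate the missing entries $[i_1,i_n]\setminus C$. A pigeonhole argument yields the bounds $i_1+j-1<m_j<i_1+j+n-1$ for $1\leq j\leq k$: the lower bound holds because $i_1\in C$ lies in $[i_1,i_1+j-1]$, leaving at most $j-1$ missing entries there, and the upper bound is symmetric using $i_n\in C$. This makes it legitimate to define the fundamental column $F_j$ with content $[i_1+j-1,\,i_1+j+n-1]\setminus\{m_j\}$. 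I set $T' = F_1\cup\cdots\cup F_k$, which has small gaps by Lemma~\ref{lem:nosorting}, and let $S$ denote the trivial tableau with columns $[i_1+t,\,i_1+t+n-1]$ for $t=1,\dots,k-1$. I then verify $C\cup S = T'$ row by row, showing that for each $l$ the multiset in row $l$ of $T'$ equals $\{i_l\}\cup\{i_1+l,\,i_1+l+1,\,\ldots,\,i_1+l+k-2\}$; this matches row $l$ of $C\cup S$ after a brief case analysis keyed on whether $i_l$ coincides with one of the shifted values $i_1+l+j-1$.

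For uniqueness, embed ${\rm SSYT}(n,[m])$ into $\mathbb{Z}_{\geq 0}^{mn}$ via row multisets, with basis vectors $e_{l,j}$ indexed by row $l$ and value $j$. A fundamental column $F_{a,r}$ (content $[a,a+n]\setminus\{r\}$) corresponds to $\sum_{l=1}^{r-a}e_{l,a+l-1}+\sum_{l=r-a+1}^{n}e_{l,a+l}$, and a trivial column $T_a$ to $\sum_{l}e_{l,a+l-1}$. If $T_1'\sim T_2'$ are both small-gap, writing $T_1'-T_2'$ as a difference of trivial tableaux yields a $\mathbb{Z}$-relation $\sum c_{a,r}F_{a,r}+\sum d_a T_a=0$ in $\mathbb{Z}^{mn}$. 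The key sublemma is that $\{F_{a,r}\}\cup\{T_a\}$ is $\mathbb{Z}$-linearly independent, which I would establish by induction on $a$: assuming $c_{a',r}=d_{a'}=0$ for $a'<a$, examine the coefficient of $e_{l,l+a-1}$ for $l=n,n-1,\dots,1$. The hypothesis eliminates all $F_{a-1,r}$ contributions, leaving only $T_a$ (with coefficient $d_a$) and those $F_{a,r}$ with $r\geq l+a$. The case $l=n$ gives $d_a=0$ since no valid $r$ remains, and then $l=n-1,n-2,\dots,1$ successively force $c_{a,a+n-1}=c_{a,a+n-2}=\cdots=c_{a,a+1}=0$. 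Linear independence forces $T_1'=T_2'$.

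Finally, Lemma~\ref{lem:weightmaphomom} shows gap weight is a monoid homomorphism ${\rm SSYT}(n,[m])\to\mathbb{Z}_{\geq 0}$ annihilating trivial tableaux, so $T$ and its small-gap partner $T'$ have the same gap weight $k$; since a small-gap tableau has gap weight equal to its number of columns, $T'$ has exactly $k$ columns (and is $\mathds{1}$ when $k=0$). The main obstacle is the linear-independence sublemma: while elementary, it requires a careful downward induction on $l$ to extract the individual coefficients one at a time. Existence, in contrast, is given by the explicit construction above, with the row-by-row verification being routine if somewhat intricate.
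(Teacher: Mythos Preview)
Your proof is correct, but both halves take a different route from the paper's argument.

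For \emph{uniqueness}, the paper simply invokes Lemma~\ref{lem:nosorting}: if $T_1'\sim T_2'$ are small-gap, then $A\cup T_1'=B\cup T_2'$ for some trivial $A,B$, and since nonlarge-gap tableaux factor column-by-column without sorting, one reads off the gap-weight-$0$ columns (recovering $A=B$) and the gap-weight-$1$ columns (recovering $T_1'=T_2'$) directly. This is a one-line argument. Your linear-independence approach in $\mathbb{Z}^{mn}$ is correct (your downward induction on $l$ works: the coefficient of $e_{l,l+a-1}$ picks up only $T_a$, $F_{a,r}$ with $r\ge a+l$, and $F_{a-1,r}$ with $r\le a+l-2$), and it proves the slightly stronger statement that fundamental and trivial columns are $\mathbb{Z}$-independent---but it is more work than necessary here.

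For \emph{existence}, the paper inducts on gap weight, peeling off one fundamental column at a time. Your closed-form construction of all $k$ fundamental factors of a column $C$ at once is more explicit and gives a concrete formula for $T'$. The row-by-row check $C\cup S=T'$ does go through: the key identity is that the row-$l$ entry of $F_j$ equals $i_1+j+l-2$ or $i_1+j+l-1$ according as $m_j\ge i_1+j+l-1$ or not, and the latter condition is equivalent to $m_j<i_l$ (since the number of $i_s\le m_j$ equals $m_j-i_1+1-j$). This makes the set of $j$ with the ``shifted'' value an initial segment of length $q=i_l-i_1-l+1$, whence row~$l$ of $T'$ is $\{i_1+l-1+q\}\cup\{i_1+l,\dots,i_1+l+k-2\}=\{i_l\}\cup\{i_1+l,\dots,i_1+l+k-2\}$, matching $C\cup S$. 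So your ``brief case analysis'' is justified, though spelling it out takes about as much space as the paper's induction.
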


In other words, the monoid ${\rm SSYT}(n, [m],\sim)$ is free on the equivalence class of the fundamental tableaux.

\begin{proof}
That $T'$ must have $k$ columns follows since the weight map is a homomorphism (Lemma~\ref{lem:weightmaphomom}). We call an expression $T \sim T'$ for $T'$ with small gaps a {\sl factorization} of $T$ into fundamentals, and we need to prove the existence and uniqueness of this factorization. For the uniqueness, suppose that $T_1,T_2$ are small gaps tableaux and $T_1 \sim T_2$. From the definition of $\sim$, one concludes that there are trivial tableaux $A_1,A_2$ such that 
$A_2 \cup T_1 = A_1 \cup T_2 \in {\rm \SSYT}(n,[m])$. But by Lemma~\ref{lem:nosorting}, we can recover the columns of $A_2$ and the columns of $T_1$ uniquely from the product $A_2 \cup T_1$ (the columns of $A_2$ are those with gap weight zero, and the columns of $T_1$ are those with gap weight one). We conclude that $A_2 = A_1$ and $T_1 = T_2$ as claimed. 

It suffices to prove the existence of factorizations when $T$ has a single column, and we do this by induction on the gap weight.  If $T$ has gap weight zero, then $T$ is trivial and admits the empty factorization. Suppose inductively that $T$ has positive gap weight $k$, and list its entries as $j_1+1, j_1+2,\dots, j_1+c,j_1+d, j_{c+2}, \ldots, j_n$ where $d-c \geq 2$. Then 
$T \sim T_{1} \cup T_{2}$, where 
\begin{align}
& T_1 = T_{[j_1+1,j_1+c] \cup [j_1+c+2,j_1+n]}, \\
& T_2 = T_{[j_1+2,j_1+c+1] \cup \{j_1+d,j_{c+2},\dots,j_n\}}.
\end{align}
The tableau $T_1$ has gap weight one. The tableau $T_2$ has strictly smaller gap weight and can be factored by induction. The existence follows.  
\end{proof}

\begin{remark}\label{rmk:TtoTprime}
The $\sim$-equivalence class of a tableau $T$ bears two distinguished elements, the reduced tableau $T_{{\rm red}}$ and the small gaps tableau $T'$. The existence portion of the proof of Lemma~\ref{lem:plucker and minimal affinization} establishes that the ratio $\frac{T'}{T_{{\rm red}}}$ is a trivial tableau. Rather than considering the factorizations in Lemma~\ref{lem:plucker and minimal affinization} as equivalences $T \sim T'$, we could instead think of them as equalities $T = T'' \cup T'$, with $T'' \in K_0({\rm SSYT}(n,[m]))$ a uniquely defined fraction of trivial tableaux. Moreover, the denominator of $T''$ is controlled: it divides the ratio $\frac{T'}{T_{{\rm red}}}$. 
\end{remark}

\begin{example}\label{eg:factorourexample}
We have the equality
$$ \begin{ytableau}
1 & 2 \\
3 & 4 \\
5 & 6
\end{ytableau} = \begin{ytableau} 2 & 3\\3 & 4   \\4 & 5 \end{ytableau}^{-1} \cup \begin{ytableau} 1 & 2 & 2& 3\\3 & 3 & 4 &4  \\4 & 5 & 5 & 6 \end{ytableau},$$
which is of the form $T = T'' \cup T'$ as in the previous remark. The tableau $T = T_{{\rm red}}$ has gap weight $4$, and $T'$ is its factorization into 4 fundamental tableaux. \end{example}


Lemma~\ref{lem:plucker and minimal affinization} asserted that the monoid ${\rm SSYT}(n,[m],\sim)$ was free on the tableaux with gap weight one. Hernandez and Leclerc gave the following algebraic counterpart (using Theorem~\ref{thm:Hernandez-Leclerc quantum affine algebras and Grassmannians} and our Lemma~\ref{lem:fundamental modules and Plucker coordinates}). One can give a direct proof using standard monomials.  

\begin{proposition}[{\cite[Theorem 5.1]{HL16}}]\label{prop:basisforquotient}
The set $\{P_T\}_{\text{small gaps $T \in {\rm SSYT}(n,[m])$}}$ is a basis for $\CC[\Gr(n,m,\sim)]$. 
\end{proposition}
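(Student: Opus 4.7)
The plan is to establish spanning and linear independence of $\{P_T : T \text{ small gaps}\}$ in $\CC[\Gr(n,m,\sim)]$ separately.

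For spanning, the standard monomials $\{P_T\}_{T \in {\rm SSYT}(n,[m])}$ are a $\CC$-basis of $\CC[\Gr(n,m)]$, so their images span the quotient. Given any such $T$, Lemma~\ref{lem:plucker and minimal affinization} produces a unique small gaps tableau $T'$ with $T \sim T'$. By Definition~\ref{defn:sim} one can write $T = T_{\rm red} \cup A$ and $T' = T_{\rm red} \cup A'$ for trivial tableaux $A, A'$. Each trivial tableau decomposes as a union of trivial columns of the form $[i,i+1,\ldots,i+n-1]$, so $P_A$ and $P_{A'}$ are products of the trivial frozen Pl\"uckers and are therefore equal to $1$ in $\CC[\Gr(n,m,\sim)]$. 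Hence $P_T \equiv P_{T_{\rm red}} \equiv P_{T'}$ in the quotient, so images of small gaps standard monomials already span.

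For linear independence, I use Lemma~\ref{lem:nosorting}: the small gaps tableaux are closed under $\cup$, with $P_S P_T = P_{S \cup T}$, so $V := \Span_\CC\{P_T : T \text{ small gaps}\}$ is a $\CC$-subalgebra of $\CC[\Gr(n,m)]$, and the bijection between small gaps tableaux and finite multisets of their fundamental columns shows that $V$ is freely generated as a polynomial algebra by the $(n-1)(\ell+1)$ fundamental Pl\"ucker coordinates. The composition $\psi : V \hookrightarrow \CC[\Gr(n,m)] \twoheadrightarrow \CC[\Gr(n,m,\sim)]$ is then a ring homomorphism that is surjective by spanning. To see $\psi$ is injective, I combine Theorem~\ref{thm:Hernandez-Leclerc quantum affine algebras and Grassmannians} with the classical fact (see \cite{FR}) that $K_0(\mathcal{C}_\ell)$ is a polynomial ring freely generated by the classes of the fundamental modules lying in $\mathcal{C}_\ell$. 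By Lemma~\ref{lem:fundamental modules and Plucker coordinates}, the isomorphism $\Phi$ carries these generators to the fundamental Pl\"uckers, so $V$ and $\CC[\Gr(n,m,\sim)]$ are both polynomial rings in $(n-1)(\ell+1)$ variables with matching generating sets; thus $\psi$ is an isomorphism.

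The hard step is the linear independence: the module-theoretic route above is quick given Theorem~\ref{thm:Hernandez-Leclerc quantum affine algebras and Grassmannians}, but a fully self-contained tableau-only proof is subtle. The obvious candidate, namely the projection $\CC[\Gr(n,m)] \to V$ along the $\CC$-span of non-small-gaps standard monomials, is \textbf{not} a ring homomorphism: straightening a product $P_U \cdot P_{[i,i+n-1]}$ for a generic semistandard $U$ typically introduces standard monomials whose columns have gap weight $\geq 2$. A self-contained approach would therefore need to match Hilbert series of $V$ and the quotient under the $\ZZ^m/H$-grading (where $H$ is the subgroup of $\ZZ^m$ generated by the degrees of the trivial frozens), or run a careful Gr\"obner degeneration argument compatible with $I = \langle P_{[i,i+n-1]}-1 : i \in [m-n+1] \rangle$.
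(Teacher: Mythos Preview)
Your spanning argument in the first paragraph contains a genuine error. You implicitly use $P_{S\cup T}=P_S\cdot P_T$ when you pass from $T=T_{\rm red}\cup A$ to $P_T\equiv P_{T_{\rm red}}P_A$, but Lemma~\ref{lem:nosorting} only guarantees this multiplicativity for \emph{nonlarge-gaps} tableaux, and $T_{\rm red}$ need not have nonlarge gaps. Concretely, take $n=3$ and $T$ with columns $[1,3,4],[2,3,5]$. Then $[2,3,4]$ is a $\cup$-factor and $T_{\rm red}=[1,3,5]$. But the Pl\"ucker relation $P_{134}P_{235}=P_{123}P_{345}+P_{135}P_{234}$ gives, in the quotient,
\[
P_T=P_{134}P_{235}=1+P_{135}\neq P_{135}=P_{T_{\rm red}}.
\]
So neither $P_T\equiv P_{T_{\rm red}}$ nor $P_T\equiv P_{T'}$ holds as claimed. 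What \emph{is} true is that $P_T$ lies in the $\CC$-span of small-gaps standard monomials (here $P_{135}=P_{134}P_{235}-1$), but establishing this directly needs an inductive straightening argument, not a single equality.

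Fortunately your second paragraph is correct and in fact does all the work. Once you know (via Theorem~\ref{thm:Hernandez-Leclerc quantum affine algebras and Grassmannians}, Lemma~\ref{lem:fundamental modules and Plucker coordinates}, and the polynomiality of $K_0(\mathcal{C}_\ell)$ in fundamental classes) that $\CC[\Gr(n,m,\sim)]$ is a polynomial ring freely generated by the images of the fundamental Pl\"uckers, and once you know (via Lemma~\ref{lem:nosorting}) that $V$ is a polynomial ring freely generated by the fundamental Pl\"uckers themselves, the map $\psi$ sending generators to generators is automatically an isomorphism---you get surjectivity for free and do not need the first paragraph at all. This is exactly the approach the paper takes: it cites the result to \cite{HL16} and remarks that it follows from Theorem~\ref{thm:Hernandez-Leclerc quantum affine algebras and Grassmannians} together with Lemma~\ref{lem:fundamental modules and Plucker coordinates}. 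So delete the first paragraph (or replace it with the observation that surjectivity follows from the polynomial-ring identification), and your proof agrees with the paper's.
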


Now we make the main definitions of this section. Recall the isomorphism $\Phi \colon K_0(\mathcal{C}_\ell) \to \CC[\Gr(n, n+\ell+1,\sim)]$. By Proposition~\ref{prop:basisforquotient}, for any module $[L(M)] \in K_0(\mathcal{C}_\ell)$, one can therefore uniquely express 
\begin{align} \label{eq:expression of Phi(L(M)) in terms of tableaux}
\Phi([L(M)]) = \sum_{\text{small gaps }T} c_T P_T \in \bbc[\Gr(n,n+\ell+1,\sim)],
\end{align}
where $c_T \in \CC$. We denote by ${\rm Top}(\Phi([L(M)]))$ the tableau which appears in (\ref{eq:expression of Phi(L(M)) in terms of tableaux}) with highest weight. In Lemma \ref{lem:weight of module is the same as the weight of tableau corresponding to the module}, we will prove the existence ${\rm Top}(\Phi(L(M)))$ for every $L(M) \in K_0(\mathcal{C}_\ell)$. Assuming for the moment this lemma, we define a map 
\begin{align}
\widetilde{\Phi}: \mathcal{P}^+_{\ell,A_{n-1}} \to {\rm SSYT}(n, [n+\ell+1],\sim) \label{eq:tildephi} \hspace{.7cm}
M \mapsto {\rm Top}(\Phi(L(M))),
\end{align}
sending a dominant monomial to this tableau of highest weight. We denote $T_M = \widetilde{\Phi}(M)$.

Next, we define a map in the other direction, producing a dominant monomial from a tableau. For $T \in {\rm SSYT}(n,[m])$, let $T\sim \cup_{i=1}^k T_{P^{(a_i,b_i,2)}}$ be its unique factorization as a $\cup$-product of fundamental tableaux, as described in Lemma~\ref{lem:plucker and minimal affinization}. 
Define the map  
\begin{align}
\Psi: {\rm SSYT}(n, [n+\ell+1])  \to \mathcal{P}^+_{\ell,A_{n-1}}  \hspace{.7cm} T \mapsto \prod_{i=1}^k Y_{n-a_i, n-a_i-2b_i}, \label{eq:psi}
\end{align}
replacing $T$ by the corresponding product of fundamental monomials. We denote $M_T = \Psi(T)$. Clearly, $\Psi$ descends to a map on $\sim$-equivalence classes.

\begin{theorem} \label{thm: parametrization of simple modules by tableaux}
The map $\widetilde{\Phi} \colon \mathcal{P}^+_{\ell,A_{n-1}} \to {\rm SSYT}(n, [n+\ell+1],\sim)$ is an isomorphism of monoids, with inverse $\Psi$. 
\end{theorem}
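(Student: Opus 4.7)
The strategy is to show that both maps extend bijections between the generators of two free abelian monoids into mutually inverse monoid isomorphisms. Specifically, $\mathcal{P}^+_{\ell,A_{n-1}}$ is free abelian on $\{Y_{i,i-2k-2} : i \in I,\ k \in [0,\ell]\}$, and by Lemma \ref{lem:plucker and minimal affinization}, ${\rm SSYT}(n,[n+\ell+1],\sim)$ is free abelian on the $\sim$-classes of fundamental tableaux.

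First, I observe that $\Psi$ is a monoid homomorphism essentially by construction: for $T, T' \in {\rm SSYT}(n,[m])$, the concatenation of their fundamental factorizations is the fundamental factorization of $T \cup T'$, so $\Psi(T \cup T') = \Psi(T)\Psi(T')$. Next, I check the two maps on generators. By Lemma \ref{lem:fundamental modules and Plucker coordinates}, $\Phi([L(Y_{j,s})]) = P_{(j,s)}$ is itself a single small-gaps standard monomial, hence $\widetilde{\Phi}(Y_{j,s}) = T_{P_{(j,s)}}$ is a fundamental tableau. Conversely, a fundamental tableau is its own fundamental factorization, and \eqref{eq:psi} recovers the corresponding $Y$-generator. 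A bookkeeping check on indices verifies that $\widetilde{\Phi}$ and $\Psi$ restrict to mutually inverse bijections between the two sets of generators.

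The remaining substantive step is to prove that $\widetilde{\Phi}$ is itself a monoid homomorphism, i.e., $\widetilde{\Phi}(MM') = \widetilde{\Phi}(M) \cup \widetilde{\Phi}(M')$. The key input is the Grothendieck ring decomposition
\[
[L(M)][L(M')] = [L(MM')] + \sum_{M'' < MM'} c_{M''}\,[L(M'')],
\]
where the sum is over the other simple composition factors of $L(M) \otimes L(M')$, satisfying $M'' < MM'$ in \eqref{partial order of monomials}. Since in type $A_{n-1}$ one has $\wt(A_{i,s}) = \alpha_i$, and the simple roots $\alpha_i$ are linearly independent, every such $M''$ satisfies $\wt(M'') < \wt(M) + \wt(M')$ strictly in $P_{\mathfrak{g}}$.

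Applying $\Phi$ and expanding both sides in the basis of small-gaps standard monomials (Proposition \ref{prop:basisforquotient}), I compare top-weight terms. By definition of $\widetilde{\Phi}$ (and Lemma \ref{lem:weight of module is the same as the weight of tableau corresponding to the module}), $\Phi([L(M)])$ has a unique standard monomial $P_{T_M}$ of maximum weight $\wt(M)$ with nonzero coefficient, and analogously for $\Phi([L(M')])$. By Lemma \ref{lem:nosorting}, every product $P_S P_{S'}$ arising in $\Phi([L(M)])\Phi([L(M')])$ is again a small-gaps standard monomial $P_{S \cup S'}$; by additivity of weight, the unique term of maximum weight $\wt(M) + \wt(M')$ is $P_{T_M \cup T_{M'}}$, and it appears with nonzero coefficient. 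On the right-hand side, each contribution $\Phi([L(M'')])$ with $M'' < MM'$ expands into standard monomials of weight strictly less than $\wt(MM')$, so the unique top-weight term is $P_{T_{MM'}}$. Equating yields $T_{MM'} = T_M \cup T_{M'}$ in ${\rm SSYT}(n,[n+\ell+1],\sim)$, establishing multiplicativity. The main step requiring care is precisely this uniqueness of the top-weight term, which rests on Lemma \ref{lem:weight of module is the same as the weight of tableau corresponding to the module}. Combined with the generator-level bijection, this proves that $\widetilde{\Phi}$ and $\Psi$ are mutually inverse isomorphisms of monoids.
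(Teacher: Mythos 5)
Your proposal is correct and follows essentially the same route as the paper: both arguments use that the two monoids are free (on the $Y_{i,i-2k-2}$ and on the classes of fundamental tableaux via Lemma~\ref{lem:plucker and minimal affinization}), reduce the inverse check to generators via Lemma~\ref{lem:fundamental modules and Plucker coordinates}, and obtain multiplicativity of $\widetilde{\Phi}$ by comparing top-weight terms in the standard monomial expansion of the decomposition of $[L(M)\otimes L(M')]$. Note only that your ``remaining substantive step'' is precisely the content of Lemma~\ref{lem:weight of module is the same as the weight of tableau corresponding to the module}, which you also cite, so that portion of your argument reproduces the paper's proof of that lemma rather than adding anything new.
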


In the remainder of this subsection, we explain that $\Psi$ is a homomorphism, that $\widetilde{\Phi}$ is well defined and is a homomorphism, and finally we prove the theorem. 

\begin{lemma} \label{lem:T and T prime equivalent implies that the corresponding heighest weight monomials are the same}
The map $\Psi$ is a monoid homomorphism ${\rm SSYT}(n, [n+\ell+1]) \to \mathcal{P}^+_{\ell,A_{n-1}}$. 
\end{lemma}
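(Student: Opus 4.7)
The plan is to leverage the uniqueness of small-gaps factorizations established in Lemma \ref{lem:plucker and minimal affinization}. First, I note that $\Psi$ is well-defined: by that lemma, every tableau $T \in {\rm SSYT}(n,[n+\ell+1])$ is $\sim$-equivalent to a \emph{unique} small-gaps tableau whose columns determine the fundamental factors $T_{P^{(a_i,b_i,2)}}$, hence the monomial $\prod_{i} Y_{n-a_i,\,n-a_i-2b_i} \in \mathcal{P}^+_{\ell,A_{n-1}}$.

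Given $S, T \in {\rm SSYT}(n,[n+\ell+1])$, let $S'$ and $T'$ be their small-gaps representatives. The core claim is that $S' \cup T'$ is itself the small-gaps representative of $S \cup T$. Granting this, the (multiset of) columns of $S' \cup T'$ is just the union of those of $S'$ and $T'$, so reading off the corresponding $Y$-variables column by column factors the output as $\Psi(S)\Psi(T)$, which by definition equals $\Psi(S \cup T)$.

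To establish the claim I would proceed in two steps. First, $S' \cup T'$ has small gaps: by Lemma \ref{lem:nosorting}, $\cup$-multiplication of small-gaps tableaux merely concatenates their columns and lexicographically sorts them, with no sorting along rows, so every column of $S' \cup T'$ remains fundamental. Second, $S' \cup T' \sim S \cup T$: using Definition \ref{defn:sim}, write $S = A \cup S_{\rm red}$, $S' = A' \cup S_{\rm red}$, $T = B \cup T_{\rm red}$, $T' = B' \cup T_{\rm red}$ with $A, A', B, B'$ trivial. Then
\begin{equation*}
S \cup T = (A \cup B) \cup (S_{\rm red} \cup T_{\rm red}), \qquad S' \cup T' = (A' \cup B') \cup (S_{\rm red} \cup T_{\rm red}),
\end{equation*}
and since $A \cup B$ and $A' \cup B'$ are trivial, both products have the same reduced form $(S_{\rm red} \cup T_{\rm red})_{\rm red}$; thus they are $\sim$-equivalent. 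By the uniqueness half of Lemma \ref{lem:plucker and minimal affinization}, $S' \cup T'$ must be \emph{the} small-gaps representative of $S \cup T$, as required.

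The only mild subtlety is the well-definedness of extracting fundamental column data from a small-gaps tableau; this is automatic because the factorization of any semistandard tableau into its columns under $\cup$ is canonical. Once the claim above is in hand, the homomorphism property $\Psi(S \cup T) = \Psi(S)\Psi(T)$ is immediate, and in particular $\Psi$ descends to a monoid map on the quotient ${\rm SSYT}(n,[n+\ell+1],\sim)$.
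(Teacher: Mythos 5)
Your proposal is correct and takes essentially the same route as the paper: reduce to the small-gaps representatives (using that $\Psi$ depends only on the $\sim$-class and that $\sim$ is compatible with $\cup$), invoke Lemma~\ref{lem:nosorting} to see that the $\cup$-product of small-gaps tableaux simply concatenates their columns, and evaluate $\Psi$ column by column. The only difference is that you spell out the congruence step $S\cup T \sim S'\cup T'$, which the paper treats as immediate (Lemma~\ref{lem:SSYT is a monoid}); that is a harmless elaboration, not a new argument.
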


\begin{proof}
Since $\Psi(T)$ only depends on the equivalence class of $T$, it suffices to check that $\Psi(T)\Psi(S) = \Psi(S \cup T)$ when $S,T$ have small gaps. By Lemma~\ref{lem:nosorting}, the product $S \cup T$ also has small gaps, and moreover the columns of $S \cup T$ are obtained as the union of the columns of $S$ and $T$ respectively. By definition, to evaluate $\Psi$ on a tableaux with small gaps is to apply the bijection between fundamental tableaux and monomials, column by column. It follows that $\Psi(T)\Psi(S) = \Psi(S \cup T)$. 
\end{proof}

\begin{example}
Let $T = \begin{ytableau}
1 \\
3 \\
6 
\end{ytableau}$ and
$T' = \begin{ytableau}
2  \\
4  \\
5 
\end{ytableau}$.
Then 
\begin{align*}
\Psi(T \cup T') & = \Psi( \begin{ytableau}
1 & 2 \\
3 & 4 \\
5 & 6 
\end{ytableau}) 
= Y_{1,-3}Y_{2,0}Y_{1,-5}Y_{2,-2}, \text{ whereas }\\
\Psi(T)\Psi(T') & = (Y_{1,-5}Y_{1,-3}Y_{2,0})(Y_{2,-2}).
\end{align*}
\end{example}

Now we set out to show that $\widetilde{\Phi}$ is a well-defined monoid homomorphism. 
\begin{lemma} \label{lem:LMLMprime decomposition}
Let $L(M), L(M') \in \mathcal{C}_{\ell}^{A_{n-1}}$. Then
\begin{align} \label{eq:LMLMprime decomposition}
[L(M)] [L(M')] = [ L(M) \otimes L(M') ] = [L(M M')] + \sum_{\tilde{M}, \wt(\tilde{M}) < \wt(MM')} c_{\tilde{M}} [L(\tilde{M})],
\end{align}
for some $c_{\tilde{M}} \in \ZZ_{\ge 0}$. 
\end{lemma}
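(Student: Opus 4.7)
The plan is to combine two ingredients: (i) the standard description of the Jordan--H\"older decomposition of a tensor product of simple $U_q(\widehat{\mathfrak{g}})$-modules in terms of the partial order on $l$-weights, and (ii) a translation from the $l$-weight partial order~\eqref{partial order of monomials} to the weight partial order on $P_{\mathfrak{g}}$.

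First, the equality $[L(M)][L(M')] = [L(M)\otimes L(M')]$ holds by the definition of the product in the Grothendieck ring. Writing the Jordan--H\"older decomposition
\[
[L(M)\otimes L(M')] = \sum_{\tilde M} c_{\tilde M}\,[L(\tilde M)], \qquad c_{\tilde M}\in\ZZ_{\ge 0},
\]
the task reduces to showing that $c_{MM'}=1$ and that every other $\tilde{M}$ appearing on the right-hand side satisfies $\wt(\tilde M)<\wt(MM')$ in $P_{\mathfrak{g}}$.

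For the first point, let $v,v'$ be highest $l$-weight vectors of $L(M),L(M')$ respectively. Then $v\otimes v'$ is a cyclic vector in $L(M)\otimes L(M')$ of $l$-weight $MM'$, and by the standard theory of finite-dimensional $U_q(\widehat{\mathfrak{g}})$-modules (cf.~\cites{CP94,CP95a,FR}) the $l$-weight $MM'$ occurs with multiplicity one in $L(M)\otimes L(M')$. It follows that $L(MM')$ appears exactly once as a composition factor, and that every other composition factor $L(\tilde M)$ has highest $l$-weight $\tilde M$ strictly less than $MM'$ in the partial order~\eqref{partial order of monomials}, i.e.\ $MM'\tilde M^{-1}\in \mathcal{Q}^+$ and $MM'\tilde M^{-1}\ne 1$.

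For the second point, I compute $\wt(A_{i,s})$ using~\eqref{eq:Aia} and the definition of $\wt$:
\[
\wt(A_{i,s}) \;=\; 2\omega_i - \sum_{j\in I,\ |j-i|=1}\omega_j \;=\; \alpha_i,
\]
the $i$-th positive simple root. If $\tilde M\ne MM'$ and $\tilde M < MM'$, then $MM'\tilde M^{-1}=\prod_{i,s} A_{i,s}^{n_{i,s}}$ with $n_{i,s}\in\ZZ_{\ge 0}$ and some $n_{i,s}>0$, so
\[
\wt(MM')-\wt(\tilde M) \;=\; \sum_{i,s} n_{i,s}\,\alpha_i
\]
is a nonzero nonnegative combination of positive simple roots. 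Hence $\wt(\tilde M)<\wt(MM')$ in $P_{\mathfrak{g}}$, which gives the bound claimed in the lemma.

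The main conceptual obstacle is the first step, namely the statement about the multiplicity of $L(MM')$ and the bound on highest $l$-weights of other composition factors in $L(M)\otimes L(M')$; however this is a standard consequence of the Chari--Pressley classification and the theory of $q$-characters, so the argument reduces to a careful citation. Once that is in hand, the weight-strictness statement is a direct computation with the definition of $A_{i,s}$.
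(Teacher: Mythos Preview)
Your proof is correct and follows essentially the same approach as the paper: both argue via $q$-characters (equivalently, the $l$-weight decomposition of the tensor product) that $MM'$ is the unique highest $l$-weight monomial, so $L(MM')$ occurs once and every other composition factor has highest $l$-weight strictly below $MM'$ in the order~\eqref{partial order of monomials}. You are more explicit than the paper in carrying out the second step, computing $\wt(A_{i,s})=\alpha_i$ to convert the inequality $\tilde M < MM'$ in $\mathcal{P}$ into the claimed inequality $\wt(\tilde M)<\wt(MM')$ in $P_{\mathfrak{g}}$; the paper leaves this translation implicit.
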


\begin{proof}
The equation (\ref{eq:LMLMprime decomposition}) is equivalent to 
\begin{align*}
\chi_q(L(M))\chi_q(L(M')) = \chi_q(L(MM')) + \sum_{\tilde{M}, \wt(\tilde{M}) < \wt(MM')} c_{\tilde{M}} \chi_q(L(\tilde{M})).
\end{align*}
The unique highest $l$-weight monomial in $\chi_q(L(M))$ is $M$ and the unique highest $l$-weight monomial in $\chi_q(L(M'))$ is $M'$. Therefore the unique highest $l$-weight in $\chi_q(L(M)) \otimes \chi_q(L(M'))$ is $MM'$. All other $l$-weight monomials in $\chi_q(L(M)) \otimes \chi_q(L(M'))$ are less than $MM'$. 
\end{proof}

The next lemma is not needed to prove Theorem~\ref{thm: parametrization of simple modules by tableaux}, but is used in Section \ref{sec: real modules}. 
 
\begin{lemma} \label{lem:qcharacter condition of real modules and prime modules}
A module $L(M)$ is real if and only if $\chi_q(L(M))\chi_q(L(M)) = \chi_q(L(M^2))$ and it is prime if and only if there are no $L(M'), L(M'') \ne \CC$ such that $\chi_q(L(M)) = \chi_q(L(M')) \chi_q(L(M''))$.
\end{lemma}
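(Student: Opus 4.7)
The plan is to rely on two ingredients: the Frenkel--Reshetikhin theorem that $\chi_q \colon K_0(\mathcal{C}) \to \mathbb{Z}\mathcal{P}$ is an injective ring homomorphism, and Lemma~\ref{lem:LMLMprime decomposition}, which describes the composition factors of a tensor product $L(M) \otimes L(M')$ in terms of simple modules of weights bounded above by $\wt(MM')$. Both equivalences will then reduce to short arguments inside $K_0(\mathcal{C}_\ell)$.

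For the reality assertion, I would first unwind the definition: $L(M)$ is real if and only if $L(M) \otimes L(M)$ is a simple module. Lemma~\ref{lem:LMLMprime decomposition} applied with $M' = M$ writes $[L(M)]^2$ as $[L(M^2)]$ plus a $\ZZ_{\ge 0}$-combination of simple classes $[L(\tilde M)]$ of strictly lower weight. Since distinct simple classes are linearly independent in $K_0(\mathcal{C}_\ell)$, the tensor square is simple precisely when this correction vanishes, i.e.\ when $[L(M)]^2 = [L(M^2)]$. Applying the injective ring map $\chi_q$ to both sides then translates this directly into the claimed identity $\chi_q(L(M))^2 = \chi_q(L(M^2))$.

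For the primeness assertion, one implication is immediate: if $L(M) \cong L(M') \otimes L(M'')$ with both factors nontrivial, then multiplicativity of $\chi_q$ gives the desired factorization of $q$-characters. For the converse, suppose $\chi_q(L(M)) = \chi_q(L(M'))\chi_q(L(M''))$ with $L(M'), L(M'') \neq \CC$. Using the ring homomorphism property I would rewrite the right side as $\chi_q(L(M') \otimes L(M''))$, and then invoke injectivity of $\chi_q$ to conclude $[L(M)] = [L(M') \otimes L(M'')]$ in $K_0(\mathcal{C}_\ell)$. Since the left side is the class of a simple module, the composition series of $L(M') \otimes L(M'')$ must have length one with unique factor $L(M)$, forcing $L(M') \otimes L(M'') \cong L(M)$ and contradicting primeness.

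The only mildly delicate step is the last one, passing from equality of Grothendieck classes with a simple class to an actual isomorphism of modules; but this is a standard property of finite-length abelian categories and is not a substantive obstacle. The real content of the argument is thus Frenkel--Reshetikhin's injectivity of $\chi_q$ (well-known but used implicitly), together with Lemma~\ref{lem:LMLMprime decomposition}.
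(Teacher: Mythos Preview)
Your proof is correct and follows essentially the same approach as the paper: both use Lemma~\ref{lem:LMLMprime decomposition} (with $M'=M$) for the reality statement and treat primeness as a direct consequence of the definitions together with injectivity of $\chi_q$. Your version simply makes explicit the ingredients (Frenkel--Reshetikhin injectivity, the passage from Grothendieck-class equality to module isomorphism) that the paper leaves implicit in its very brief argument.
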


\begin{proof}
By definition and Lemma \ref{lem:LMLMprime decomposition}, a module $L(M)$ is real if and only if the right hand side of (\ref{eq:LMLMprime decomposition}) has only one term $[L(MM')]$. Therefore $L(M)$ is real if and only if 
\begin{align*}
\chi_q(L(M))\chi_q(L(M)) = \chi_q(L(M^2)).
\end{align*} 

By definition, a module $L(M)$ is prime if and only if there are no $L(M'), L(M'') \ne \CC$ such that $\chi_q(L(M)) = \chi_q(L(M')) \chi_q(L(M''))$.
\end{proof}

\begin{lemma}\label{lem:weight of module is the same as the weight of tableau corresponding to the module}
For a module $L(M) \in K_0(\mathcal{C}_\ell)$, ${\rm Top}(\Phi(L(M)))$ exists, and $\widetilde{\Phi}$ is a homomorphism. Moreover, $\wt(M) = \wt( {\rm Top}(\Phi(L(M))) )$.
\end{lemma}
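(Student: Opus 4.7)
The plan is to prove by strong induction on $M\in \mathcal{P}^+_\ell$ the following strengthened claim: letting $T^\flat_M$ denote the unique small gaps tableau (well-defined up to $\sim$ by Lemma~\ref{lem:plucker and minimal affinization}) with $\Psi(T^\flat_M)=M$,
\begin{align*}
\Phi([L(M)]) \;=\; P_{T^\flat_M} \;+\; \sum_{\substack{S\text{ small gaps}\\ \wt(S) < \wt(M)}} c_S\, P_S\ \in\ \bbc[\Gr(n,n+\ell+1,\sim)],\qquad c_S\in\bbc.
\end{align*}
Granting this, all three assertions of the lemma follow immediately. The Pl\"ucker monomial $P_{T^\flat_M}$ is the unique term of maximal weight in the small gaps expansion, so $\mathrm{Top}(\Phi([L(M)]))=T^\flat_M$ exists; thus $\widetilde\Phi(M)=T^\flat_M=\Psi^{-1}(M)$. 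The weight $\wt(T^\flat_M)=\wt(M)$ holds because by Lemma~\ref{lem:fundamental modules and Plucker coordinates} each fundamental column of $T^\flat_M$ contributes exactly the fundamental weight of its corresponding $Y_{i,s}$-factor of $M$, and $\wt$ is a homomorphism (Lemma~\ref{lem:weightmaphomom}). Finally, since $\Psi$ is a monoid homomorphism (Lemma~\ref{lem:T and T prime equivalent implies that the corresponding heighest weight monomials are the same}) that restricts to a bijection on the respective generating sets of fundamentals, $\widetilde\Phi=\Psi^{-1}$ is a monoid homomorphism as well.

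For the induction I use the integer-valued functional $(\wt(M),\rho^\vee)\in\ZZ_{\ge 0}$ as a well-founded measure, which strictly decreases whenever $\wt(\tilde M)<\wt(M)$. The base case $M=1$ is trivial, and $M=Y_{i,s}$ is exactly Lemma~\ref{lem:fundamental modules and Plucker coordinates}. For the inductive step, factor $M=Y_{i,s}\cdot M'$ with $M'\neq 1$ and rearrange Lemma~\ref{lem:LMLMprime decomposition} to obtain
\begin{align*}
\Phi([L(M)])\;=\;\Phi([L(Y_{i,s})])\,\Phi([L(M')])\;-\;\sum_{\tilde M<M} c_{\tilde M}\,\Phi([L(\tilde M)]).
\end{align*}
Applying Lemma~\ref{lem:fundamental modules and Plucker coordinates} to $\Phi([L(Y_{i,s})])$ and the inductive hypothesis to $\Phi([L(M')])$ and to each $\Phi([L(\tilde M)])$, Lemma~\ref{lem:nosorting} yields that the product of leading terms is
\[
P_{T^\flat_{Y_{i,s}}}\cdot P_{T^\flat_{M'}} \;=\; P_{T^\flat_{Y_{i,s}}\,\cup\, T^\flat_{M'}} \;=\; P_{T^\flat_M},
\]
the last equality because $\Psi(T^\flat_{Y_{i,s}}\cup T^\flat_{M'})=Y_{i,s}M'=M$ (by Lemma~\ref{lem:T and T prime equivalent implies that the corresponding heighest weight monomials are the same}) and small gaps representatives of a $\sim$-class are unique. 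All other cross terms in the product are small gaps Pl\"{u}cker monomials of weight strictly less than $\wt(M)$ by Lemma~\ref{lem:weightmaphomom}, and each subtracted term $\Phi([L(\tilde M)])$ expands into small gaps monomials of weight at most $\wt(\tilde M)<\wt(M)$ by induction. This proves the strengthened claim.

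The main obstacle is confirming that the coefficient of $P_{T^\flat_M}$ in $\Phi([L(M)])$ remains exactly~$1$ after the subtraction. This rests on combining two stability results: Lemma~\ref{lem:nosorting} guarantees that products of small gaps standard monomials are again single small gaps standard monomials with no Pl\"ucker ``straightening,'' so no spurious copy of $P_{T^\flat_M}$ can be produced by cross-terms among lower-order pieces of $\Phi([L(Y_{i,s})])\Phi([L(M')])$; and Lemma~\ref{lem:weightmaphomom} ensures that every other term, in either the product or the subtraction, is strictly lighter than $P_{T^\flat_M}$, so no cancellation can reach the top weight. The remainder of the argument is routine bookkeeping of weights in the partial order on $P_{\mathfrak{g}}$.
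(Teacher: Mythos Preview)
Your proof is correct and follows essentially the same inductive strategy as the paper: both argue by induction on (a measure of) the weight of $M$, invoke the decomposition of Lemma~\ref{lem:LMLMprime decomposition} to reduce to strictly smaller monomials, and compare highest-weight terms to pin down the top of $\Phi([L(M)])$. Your version is somewhat more explicit---you state the strengthened inductive hypothesis (leading coefficient exactly~$1$, with $\widetilde\Phi(M)=\Psi^{-1}(M)$ identified outright) and specify the well-founded measure $(\wt(M),\rho^\vee)$, and you always split off a single fundamental rather than an arbitrary nontrivial factorization---but the underlying argument is the same.
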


\begin{proof}
By induction on the weight of $M$. The base case of fundamental modules is covered in Lemma \ref{lem:fundamental modules and Plucker coordinates}.

Now suppose we have a simple module corresponding to a dominant monomial of degree $\geq 2$. Choose any factorization of this monomial as $MM'$ with both factors nontrivial. By Lemma \ref{lem:LMLMprime decomposition} and the fact that $\Phi$ is an isomorphism, we have
\begin{align} 
\Phi(L(M)) \Phi(L(M')) = \Phi(L(M M')) + \sum_{\tilde{M}, \wt(\tilde{M}) < \wt(MM')} c_{\tilde{M}} \Phi( L(\tilde{M}) ),
\end{align}
for some $c_{\tilde{M}} \in \ZZ_{\ge 0}$. 

Then all of the terms $M,M',\tilde{M}$ have smaller weight than $MM'$, so by the inductive hypothesis ${\rm Top}(\Phi(L(M)))$ exists and its weight is $\wt(M)$ (and likewise for $M',\tilde{M}$). Moreover, each $\wt({\tilde{M}}) < \wt(M) + \wt(M')$. By comparing the highest weight terms in the left and right hand side, we conclude that ${\rm Top}(\Phi(L(MM')))$ exists, and in fact this top term coincides with ${\rm Top}(\Phi(L(M))) \cup {\rm Top}(\Phi(L(M')))$ (recalling that for small gaps tableaux, the product of Pl\"ucker coordinates corresponds to $\cup$). This establishes that $\widetilde{\Phi}$ is a well-defined homomorphism. The statement about weights follows: 
\begin{align*}
\wt(MM') & = \wt(M) + \wt(M') \\
& = \wt( {\rm Top}(\Phi(L(M))) )  + \wt( {\rm Top}(\Phi(L(M'))) ) = \wt( {\rm Top}(\Phi(L(MM'))) ). 
\end{align*}
\end{proof}

Finally, we prove the main theorem of this subsection. 
\begin{proof}[Proof of Theorem~\ref{thm: parametrization of simple modules by tableaux}]
By the proof of Lemma~\ref{lem:plucker and minimal affinization}, the monoid ${\rm SSYT}(n,[m],\sim)$ is free on the classes of the fundamental tableaux $T_{P^{(i,s)}}$. The monoid $\mathcal{P}_\ell^+$ is free by definition. We have defined monoid homomorphisms  $\widetilde{\Phi}$ and $\Psi$ in both directions, and we need to check that both composites $\Psi \widetilde{\Phi}$ and $\widetilde{\Phi} \Psi$ are the identity map. One can check such a statement on the free monoid generators. But then both statements follow from the definition of $\Psi$, and the equality $\widetilde{\Phi}(Y_{i,s}) = T_{P_{(i,s)}}$ established in Lemma~\ref{lem:fundamental modules and Plucker coordinates}. 
\end{proof}

\subsection{\texorpdfstring{The elements $\ch(T) \in \CC[\Gr(n,m,\sim)]$}{A basis for the coordinate ring}}
\begin{definition} \label{def:definition of ch(T)}
For a semistandard tableau $T \in {\rm SSYT}(n, [n+\ell+1],\sim)$ define $\ch(T) \in \CC[\Gr(n,n+\ell+1,\sim)]$ by $\ch(T) = \Phi([L(\Psi(T))])$ with $\Psi$ defined in \eqref{eq:psi}. 
\end{definition}

We use homogeneity to lift this definition from $\CC[\Gr(n,n+\ell+1,\sim)]$ to (a localization of) $\CC[\Gr(n,n+\ell+1)]$ in Definition~\ref{defn:chTforGr}.

\begin{example}
Tables \ref{table:correspondence Gr36 and Uqsl3hat} and \ref{table:correspondence Gr37 and Uqsl3hat} are examples of the correspondence between tableaux and modules. To save space, we write a tableau by listing its column sets. For example, $[1,2,4], [3,5,6]$ denotes the tableau $\begin{ytableau} 1 & 3 \\ 2 & 5 \\ 4 & 6 \end{ytableau}$. In Tables \ref{table:correspondence Gr36 and Uqsl3hat} and \ref{table:correspondence Gr37 and Uqsl3hat}, we write $Y_{i,s}$ as $i_s$. For example, $1_{-3} 1_{-1}$ denotes the module $L(Y_{1,-3} Y_{1,-1})$. 
\end{example}

\begin{table}
{\tiny
\begin{equation*}
\begin{minipage}{0.3\textwidth}
\begin{tabular}{|c|c|}
\hline
tableaux & modules \\
\hline
$[1,2,4]$ & $1_{-1}$ \\
\hline
$[1,2,5]$ & $1_{-3}1_{-1}$ \\  
\hline 
 $[1,2,6]$ & $1_{-5} 1_{-3} 1_{-1}$ \\  
\hline
 $[1,2,7]$ & $1_{-7} 1_{-5} 1_{-3} 1_{-1}$ \\  
\hline
$[1,3,4]$ & $2_{0}$ \\  
\hline 
$[1,3,5]$ & $2_{0} 1_{-3}$ \\  
 \hline
 $[1,3,6]$ & $2_{0} 1_{-3} 1_{-5}$ \\  
\hline
 $[1,3,7]$ & $2_{0} 1_{-3} 1_{-5} 1_{-7}$ \\  
\hline
 $[1,4,5]$ & $2_{0} 2_{-2}$ \\  
\hline
$[1,4,6]$ & $2_{0} 2_{-2} 1_{-5}$ \\  
\hline
 $[1,4,7]$ & $2_{0} 2_{-2} 1_{-5} 1_{-7}$ \\  
\hline
 $[1,5,6]$ & $2_{0} 2_{-2} 2_{-4}$ \\  
\hline
 $[1,5,7]$ & $2_{0} 2_{-2} 2_{-4} 1_{-7}$ \\  
\hline
 $[1,6,7]$ & $2_{0} 2_{-2} 2_{-4} 2_{-6}$ \\  
\hline
$[2,3,5]$ & $1_{-3}$ \\  
\hline
\end{tabular}
\end{minipage}
\hspace{-0.5cm}
\begin{minipage}{0.3\textwidth}
\begin{tabular}{|c|c|}
\hline
tableaux & modules \\
\hline 
 $[2,3,6]$ & $1_{-3} 1_{-5}$ \\ 
 \hline 
 $[2,3,7]$ & $1_{-3} 1_{-5} 1_{-7}$ \\   
\hline
$[2,4,5]$ & $2_{-2}$ \\  
\hline
 $[2,4,6]$ & $2_{-2} 1_{-5}$ \\  
\hline
 $[2,4,7]$ & $2_{-2} 1_{-5} 1_{-7}$ \\  
\hline
 $[2,5,6]$ & $2_{-2} 2_{-4}$ \\  
\hline
 $[2,5,7]$ & $2_{-2} 2_{-4} 1_{-7}$ \\  
 \hline
 $[2,6,7]$ & $2_{-2} 2_{-4} 2_{-6}$ \\
\hline
$[3,4,6]$ & $1_{-5}$ \\  
\hline 
 $[3,4,7]$ & $1_{-5} 1_{-7}$ \\  
 \hline
$[3,5,6]$ & $2_{-4}$ \\  
\hline
 $[3,5,7]$ & $2_{-4} 1_{-7}$ \\  
\hline
 $[3,6,7]$ & $2_{-4} 2_{-6}$ \\  
\hline 
$[4,5,7]$ & $1_{-7}$ \\ 
\hline 
$[4,6,7]$ & $2_{-6}$ \\  
\hline
\end{tabular}
\end{minipage}
\hspace{-1cm}
\begin{minipage}{0.3\textwidth}
\begin{tabular}{|c|c|}
\hline
tableaux & modules  \\ 
\hline 
 $[1,2,4],[3,5,6]$ & $1_{-1} 2_{-4}$ \\  
\hline
 $[1,2,4],[3,5,7]$ & $1_{-1} 2_{-4} 1_{-7}$ \\  
\hline
 $[1,2,4],[3,6,7]$ & $1_{-1} 2_{-4} 2_{-6}$ \\  
 \hline
 $[1,2,5],[3,6,7]$ & $1_{-1} 1_{-3} 2_{-4} 2_{-6}$ \\  
\hline
 $[1,2,5],[4,6,7]$ & $1_{-1} 1_{-3} 2_{-6}$ \\  
\hline
 $[1,3,5],[2,4,6]$ & $2_{0} 1_{-3} 2_{-2} 1_{-5}$ \\  
\hline
 $[1,3,5],[2,4,7]$ & $2_{0} 1_{-3} 2_{-2} 1_{-5} 1_{-7}$ \\  
\hline
 $[1,3,5],[4,6,7]$ & $2_{0} 1_{-3} 2_{-6}$ \\
\hline
 $[1,3,6],[2,4,7]$ & $2_{0} 1_{-3} 2_{-2} 1_{-5}^2 1_{-7}$ \\  
\hline
$[1,3,6],[2,5,7]$ & $2_{0} 1_{-3} 2_{-2} 1_{-5} 2_{-4} 1_{-7}$ \\
\hline
 $[1,4,6],[2,5,7]$ & $2_{0} 2_{-2}^2 1_{-5} 2_{-4} 1_{-7}$ \\    
\hline
 $[1,4,6],[3,5,7]$ & $2_{0} 2_{-2} 1_{-5} 2_{-4} 1_{-7}$ \\  
 \hline
 $[2,3,5],[4,6,7]$ & $1_{-3} 2_{-6}$ \\  
\hline
 $[2,4,6],[3,5,7]$ & $2_{-2} 1_{-5} 2_{-4} 1_{-7}$ \\  
\hline
\end{tabular}
\end{minipage}
\end{equation*}
}
\caption{Correspondence between tableaux for ${\rm SSYT}(3,[7])$ and modules in $\mathcal{C}_{3}^{A_2}$.}
\label{table:correspondence Gr37 and Uqsl3hat}
\end{table}

Using the isomorphism $\widetilde{\Phi}$ and the results of Kashiwara, Kim, Oh, and Park \cite{KKOP} and Qin \cite{Qin}, we have the following. 
\begin{theorem} \label{thm: cluster monomials are tableaux}
Every cluster monomial (resp. cluster variable) in $\bbc[\Gr(n,n+\ell+1,\sim)]$ is of the form $\ch(T)$ for some real tableau (resp. prime real tableau) $T \in {\rm SSYT}(n, [n+\ell+1])$.
\end{theorem}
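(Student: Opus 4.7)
The plan is to assemble three ingredients that have already been recalled in the paper: the Hernandez--Leclerc isomorphism $\Phi$ (Theorem~\ref{thm:Hernandez-Leclerc quantum affine algebras and Grassmannians}), the monoid isomorphism $\widetilde{\Phi}$ (Theorem~\ref{thm: parametrization of simple modules by tableaux}), and the deep representation-theoretic results of Kashiwara--Kim--Oh--Park \cite{KKOP} and Qin \cite{Qin}. Once these are combined the statement reduces to tracing definitions, since we \emph{defined} a tableau $T$ to be real (resp.\ prime) exactly when $L(\widetilde{\Phi}^{-1}(T)) = L(\Psi(T))$ is real (resp.\ prime).

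First I would use Theorem~\ref{thm:Hernandez-Leclerc quantum affine algebras and Grassmannians}, which says $\Phi$ is an isomorphism of cluster algebras from $K_0(\mathcal{C}_\ell)$ to $\CC[\Gr(n,n+\ell+1,\sim)]$. In particular, $\Phi$ takes cluster variables to cluster variables and cluster monomials to cluster monomials, so every cluster variable (resp.\ monomial) of $\CC[\Gr(n,n+\ell+1,\sim)]$ is of the form $\Phi([V])$ for $[V]$ a cluster variable (resp.\ monomial) of $K_0(\mathcal{C}_\ell^{A_{n-1}})$.

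Next I would invoke the results \cite{KKOP} and \cite{Qin}, already quoted in the introduction. Since $\mathfrak{g} = \mathfrak{sl}_n$ is of type $A$, \cite{KKOP} asserts that each cluster monomial in $K_0(\mathcal{C}_\ell^{A_{n-1}})$ is the class of a real simple module $[L(M)]$ for some $M \in \mathcal{P}_\ell^+$. Combined with \cite{Qin}, each cluster variable in $K_0(\mathcal{C}_\ell^{A_{n-1}})$ is the class $[L(M)]$ of a real \emph{and} prime simple module. Applying $\Phi$, every cluster monomial (resp.\ variable) in $\CC[\Gr(n,n+\ell+1,\sim)]$ is of the form $\Phi([L(M)])$ for a real (resp.\ prime real) simple module $L(M)$.

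Finally I would translate to tableaux. By Theorem~\ref{thm: parametrization of simple modules by tableaux}, the map $\widetilde{\Phi}\colon \mathcal{P}_\ell^+ \to {\rm SSYT}(n,[n+\ell+1],\sim)$ is a bijection with inverse $\Psi$, so we may write $M = \Psi(T)$ for a unique $T \in {\rm SSYT}(n,[n+\ell+1],\sim)$. Then by Definition~\ref{def:definition of ch(T)},
\[
\Phi([L(M)]) = \Phi([L(\Psi(T))]) = \ch(T).
\]
Since we call $T$ real (resp.\ prime) precisely when $L(\Psi(T))$ is, this identifies the cluster monomials (resp.\ variables) of $\CC[\Gr(n,n+\ell+1,\sim)]$ with $\ch(T)$ for $T$ a real (resp.\ prime real) tableau, proving the theorem. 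There is no real obstacle here: the entire content of the theorem lies in the cited works of \cite{KKOP} and \cite{Qin} and in Theorems~\ref{thm:Hernandez-Leclerc quantum affine algebras and Grassmannians} and \ref{thm: parametrization of simple modules by tableaux}; the proof itself amounts to chasing one commutative diagram and invoking our definitions of ``real'' and ``prime'' for tableaux.
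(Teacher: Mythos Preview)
Your proposal is correct and follows essentially the same approach as the paper: invoke \cite{Qin} and \cite{KKOP} to identify each cluster monomial (resp.\ cluster variable) in $K_0(\mathcal{C}_\ell)$ with the class of a real (resp.\ real prime) simple module $L(M)$, then use $\ch(\widetilde{\Phi}(M)) = \Phi([L(M)])$ together with the definition of real/prime for tableaux. The paper's proof is simply a two-sentence compression of exactly this argument.
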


\begin{proof}
By \cite[Theorem 1.2.1]{Qin} and \cite[Theorem 6.10]{KKOP}, any cluster monomial (resp. cluster variable) in $\mathcal{R}_{\ell}^{\mathfrak{g}}$ corresponds to the Grothendieck class of a real (resp. real prime) simple object $L(M) \in \mathcal{C}_{\ell}^{\mathfrak{g}}$. Then $\ch(\widetilde{\Phi}(M)) = \Phi([L(M)])$ is a cluster monomial (resp. cluster variable) as claimed. 
\end{proof}

\begin{proposition}
Let $T, T' \in {\rm SSYT}(n, [n+\ell+1])$. Then $\ch(T \cup T') = \ch(T) \ch(T')$ if and only if $[L(\Psi(T\cup T'))] = [L(\Psi(T)) \otimes L(\Psi(T'))]$.
\end{proposition}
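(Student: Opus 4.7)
The plan is to unwind the definitions and use that $\Phi$ is a ring isomorphism and $\Psi$ is a monoid homomorphism; no genuinely new content is required.

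First I would rewrite both sides in terms of $\Phi$ and $\Psi$. By Definition~\ref{def:definition of ch(T)}, $\ch(T) = \Phi([L(\Psi(T))])$ and similarly for $T'$ and $T \cup T'$. Since $\Phi$ is a ring homomorphism and the product in $K_0(\mathcal{C}_\ell)$ corresponds to the tensor product of modules,
\begin{equation*}
\ch(T)\,\ch(T') \;=\; \Phi([L(\Psi(T))])\,\Phi([L(\Psi(T'))]) \;=\; \Phi\bigl([L(\Psi(T)) \otimes L(\Psi(T'))]\bigr).
\end{equation*}
On the other hand, by Lemma~\ref{lem:T and T prime equivalent implies that the corresponding heighest weight monomials are the same}, $\Psi$ is a monoid homomorphism, so $\Psi(T \cup T') = \Psi(T)\Psi(T')$, and hence
\begin{equation*}
\ch(T \cup T') \;=\; \Phi\bigl([L(\Psi(T)\Psi(T'))]\bigr).
\end{equation*}

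Next I would invoke the fact that $\Phi$ is an isomorphism (Theorem~\ref{thm:Hernandez-Leclerc quantum affine algebras and Grassmannians}). Comparing the two displayed expressions, the equality $\ch(T \cup T') = \ch(T)\ch(T')$ holds in $\CC[\Gr(n,n+\ell+1,\sim)]$ if and only if
\begin{equation*}
[L(\Psi(T)\Psi(T'))] \;=\; [L(\Psi(T)) \otimes L(\Psi(T'))]
\end{equation*}
holds in $K_0(\mathcal{C}_\ell)$. Substituting $\Psi(T)\Psi(T') = \Psi(T \cup T')$ into the left-hand side yields exactly the statement $[L(\Psi(T \cup T'))] = [L(\Psi(T)) \otimes L(\Psi(T'))]$, completing the equivalence.

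There is no real obstacle; the statement is essentially a transport-of-structure observation. The substantive content is contained in Theorem~\ref{thm: parametrization of simple modules by tableaux} (that $\Psi$ intertwines the two monoid structures) and in Theorem~\ref{thm:Hernandez-Leclerc quantum affine algebras and Grassmannians} (that $\Phi$ is a ring isomorphism), both of which have already been established. A short remark at the end could emphasize the meaning: the product $\ch(T)\ch(T')$ equals $\ch(T \cup T')$ precisely when $L(\Psi(T)) \otimes L(\Psi(T'))$ is simple (equivalently, when applying Lemma~\ref{lem:LMLMprime decomposition} the sum of lower-weight terms vanishes), which is how reality of tableaux is detected in practice.
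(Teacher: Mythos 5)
Your proposal is correct and follows essentially the same argument as the paper: rewrite $\ch(T\cup T')$ and $\ch(T)\ch(T')$ via $\Phi$ and $\Psi$, use that $\Phi$ is a ring isomorphism (so products correspond to classes of tensor products and injectivity transfers the equality), and conclude. The only cosmetic difference is your extra appeal to $\Psi$ being a monoid homomorphism, which is harmless but not needed since Definition~\ref{def:definition of ch(T)} already gives $\ch(T\cup T') = \Phi([L(\Psi(T\cup T'))])$ directly.
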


\begin{proof}
This follows by comparing 
\begin{align*}
\ch(T\cup T') &= \Phi([L(\Psi(T \cup T'))])  \text{ versus } \\
\ch(T) \ch(T')&= \Phi([L(\Psi(T))]) \Phi([L(\Psi(T'))]) = \Phi( [L(\Psi(T)) \otimes L(\Psi(T'))]),
\end{align*}
since $\Phi$ is an isomorphism. 
\end{proof}

\subsection{Comparison of partial orders}\label{subsecn:comparison}
We end this section by comparing the partial order on tableaux with the one on dominant monomials. 

Following \cite[(3.7)]{FZ07} and \cite[Section 4.5.1]{HL16}, denote
\begin{align*}
\widehat{y}_{i,r} = \prod_{(i,r) \to (j,s)} z_{j,s} \prod_{(j,s) \to (i,r)} z_{j,s}^{-1}, \quad i \in [n-1], \quad r \in [0, \ell-1],
\end{align*}
the standard Fomin-Zelevinsky $\widehat{y}$-variables with respect to the initial seed in Figure~\ref{fig:initial cluster for Uqslhat5}. 

\begin{lemma}[{\cite[Lemma 4.15]{HL16}}] \label{lem:yhat is Ainverse}
For $i \in [n-1]$, $r \in [0, \ell-1]$, $\widehat{y}_{i,r} = A_{i,r-1}^{-1}$, where $A_{i,s}$ is defined in (\ref{eq:Aia}).
\end{lemma}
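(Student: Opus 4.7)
The plan is to compute $\widehat{y}_{i,r}$ directly from its definition and then match the resulting monomial against $A_{i,r-1}^{-1}$. Using the three edge families defining $Q_\ell$, the arrows leaving $(i,r)$ point to $(i+1,r+1)$, $(i,r-1)$, and $(i-1,r)$, while the arrows entering $(i,r)$ originate at $(i-1,r-1)$, $(i,r+1)$, and $(i+1,r)$; if an endpoint falls outside $I\times[0,\ell]$ the corresponding arrow is simply absent. Substituting into the definition of $\widehat{y}_{i,r}$ gives
\[
\widehat{y}_{i,r} \;=\; \frac{z_{i+1,r+1}\, z_{i,r-1}\, z_{i-1,r}}{z_{i-1,r-1}\, z_{i,r+1}\, z_{i+1,r}},
\]
with the convention that a $z$-factor whose index falls outside $I\times[0,\ell]$ is $1$.

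Next I would substitute $z_{j,t} = \prod_{k=0}^{t} Y_{j,\,j-2k-2}$, which follows from the Kirillov-Reshetikhin formula \eqref{eq:KR} applied to the identification $z_{i,t}\mapsto L(X_{i,t+1}^{(i-2t-2)})$ of initial cluster variables. For fixed $j$, each consecutive ratio $z_{j,t+1}/z_{j,t}$ telescopes to a single $Y$-factor. Thus the ratios $z_{i+1,r+1}/z_{i+1,r}$ and $z_{i-1,r}/z_{i-1,r-1}$ each collapse to one $Y$-factor (contributing to the numerator), while the combined ratio $z_{i,r+1}/z_{i,r-1}$ collapses to two $Y_{i,\ast}$-factors (contributing to the denominator). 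The six-fold ratio therefore reduces to a monomial in exactly four $Y$'s.

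Finally I would match this four-factor monomial against $A_{i,r-1}^{-1}$ via the defining formula \eqref{eq:Aia}. The numerator contributions $Y_{i-1,\ast},\,Y_{i+1,\ast}$ align with the Dynkin-neighbor product $\prod_{|j-i|=1} Y_{j,\ast}$ appearing in $A_{i,\ast}^{-1}$, while the denominator contributions $Y_{i,\ast}\,Y_{i,\ast}$ correspond to the two leading factors $Y_{i,s+1}Y_{i,s-1}$ of $A_{i,s}$. Boundary cases $i\in\{1,n-1\}$ or $r=0$ need no separate argument: each arrow missing from $Q_\ell$ removes a $z$-factor whose telescoped $Y$-contribution would involve an index outside $I$, and such a contribution is already absent from the product in \eqref{eq:Aia}. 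The only care required is to reconcile the index conventions relating the grid coordinate $r$ to the spectral parameter $s$ in $Y_{i,s}$; I do not anticipate any conceptual obstacle beyond this bookkeeping.
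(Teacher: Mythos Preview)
The paper does not supply its own proof of this lemma; it is quoted from \cite[Lemma~4.15]{HL16} and used as a black box. Your direct computation---listing the six arrows incident to $(i,r)$, telescoping the Kirillov--Reshetikhin monomials $z_{j,t}=\prod_{k=0}^{t}Y_{j,\,j-2k-2}$, and matching the resulting four-term monomial against \eqref{eq:Aia}---is exactly the natural verification and is correct.

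Your closing caveat about reconciling the grid index $r$ with the spectral index $s$ is well placed and in fact carries more weight than you suggest. If you carry the telescoping through you obtain
\[
\widehat{y}_{i,r}
\;=\;
\frac{Y_{i+1,\,i-2r-3}\;Y_{i-1,\,i-2r-3}}{Y_{i,\,i-2r-2}\;Y_{i,\,i-2r-4}}
\;=\;
A_{i,\,i-2r-3}^{-1},
\]
so the subscript ``$r-1$'' in the lemma is written in the indexing conventions of \cite{HL16} rather than literally as the spectral parameter $s=r-1$ in this paper's definition \eqref{eq:Aia}. This is purely a change of labels and does not affect how the lemma is applied in Section~\ref{subsecn:comparison}, but it is worth making explicit in your write-up.
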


We remark that the partial order on monomials \eqref{partial order of monomials} therefore matches the partial order on Laurent monomials in initial cluster variables, as defined for arbitrary cluster algebras whose extended exchange matrices have full rank by Qin \cite{Qin} (see also \cite{Cas}). We believe that \eqref{partial order of monomials} was in fact an inspiration for Qin's definition of this partial order.

\begin{proposition}\label{prop:posetsaresame}
Let $T,T' \in {\rm SSYT}(n,[n+\ell+1])$ be tableaux with the same content. Then $T \leq T'$ (in the sense of Definition~\ref{defn:dominanceontableaux}) if and only if $M_T \leq M_{T'} \in \mathcal{P}^+_{\ell}$.
\end{proposition}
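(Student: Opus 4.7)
The plan is to prove both implications by reducing to ``elementary moves'' on each side and matching them. On the tableau side, by the characterization cited just after Definition~\ref{defn:dominanceontableaux}, $T \le T'$ iff there is a chain $T = T_0 \le T_1 \le \cdots \le T_r = T'$ in which each step is a transposition of the entries in a pair of boxes. On the monomial side, $M_T \le M_{T'}$ iff $M_{T'}/M_T = \prod_{a=1}^{r'} A_{i_a, s_a}$ is a product of generators of $\mathcal{Q}^+$. The proof therefore reduces to a dictionary between these two kinds of elementary moves.

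For $(\Rightarrow)$, consider a single transposition $T \le T'$ in which $T$ has $y$ at $(i,c)$ and $x$ at $(j,d)$, while $T'$ has $x$ at $(i,c)$ and $y$ at $(j,d)$, with $i<j$ and $x<y$ (the ``sorted'' direction). Only columns $c$ and $d$ of $T, T'$ differ, so the ratio $M_{T'}/M_T$ depends only on these four columns, and can be computed explicitly using the small-gap factorization of Lemma~\ref{lem:plucker and minimal affinization}. I claim that
\[
M_{T'}/M_T \;=\; \prod_{k=i}^{j-1}\prod_{y'=x}^{y-1} A_{n-k,\,s_{k,y'}} \;\in\; \mathcal{Q}^+,
\]
for specific shifts $s_{k,y'}$ determined by the local column data. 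Using $A_{i,s} = Y_{i,s+1}Y_{i,s-1}Y_{i-1,s}^{-1}Y_{i+1,s}^{-1}$, the product telescopes, and the surviving $Y$-variables recover the fundamental-column decompositions of $c$ and $d$ in $T$ versus $T'$. A consistency check is provided by Lemma~\ref{lem:dominanceimpliesweight}: the weight of both sides equals $(y-x)(\alpha_i+\cdots+\alpha_{j-1})$, which matches the fact that each $A_{k,\cdot}$ has weight $\alpha_k$.

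For $(\Leftarrow)$, invert the dictionary: given a factorization $M_{T'}/M_T = A_{i_1, s_1}\cdots A_{i_{r'}, s_{r'}}$, group the factors into the $(k,y')$-blocks predicted by the product formula above and reassemble a chain of dominance-increasing transpositions from $T$ to $T'$. Equivalently, one may induct on $r'$, peeling off one $A_{i,s}$ at a time and using the dictionary to identify each peel with a tableau transposition; well-definedness of the intermediate tableaux requires that the intermediate monomials remain in $\mathcal{P}^+_\ell$, which can be arranged by choosing the specific ordering of the $A$-factors produced in $(\Rightarrow)$.

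The main obstacle is verifying the product formula in $(\Rightarrow)$. A single transposition can affect the small-gap factorizations of the two columns nontrivially, so the shifts $s_{k,y'}$ depend delicately on the positions of the swapped entries within the gap structure of the columns. I would first pin these shifts down in the simplest cases ($j-i=1$, $y-x=1$) by direct computation (the $n=3$ calculations in Tables~\ref{table:correspondence Gr36 and Uqsl3hat} and \ref{table:correspondence Gr37 and Uqsl3hat} already provide enough data to conjecture them), then extend by induction on $(j-i)+(y-x)$ using the telescoping identities among the $A_{i,s}$ that underlie their definition.
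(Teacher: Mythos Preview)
Your $(\Rightarrow)$ direction is a plausible computational route, but the heart of the proposal --- the claimed product formula $M_{T'}/M_T = \prod_{k,y'} A_{n-k,s_{k,y'}}$ --- is only asserted, not proved, and the induction you sketch would need real work because a single box-transposition can change the small-gaps factorizations of \emph{both} affected columns in complicated ways. The weight check is necessary but far from sufficient.

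The genuine gap is in $(\Leftarrow)$. Your plan is to ``invert the dictionary'': given a factorization $M_{T'}/M_T = \prod_a A_{i_a,s_a}$, either group the $A$-factors into blocks coming from the $(\Rightarrow)$ formula, or peel them off one at a time. Neither works. Grouping presupposes a chain of tableau transpositions from $T$ to $T'$, which is exactly what you are trying to prove. Peeling fails because multiplying $M_T$ by a single $A_{i,s}$ need not land in $\mathcal{P}^+_\ell$ (for instance if $M_T=1$), so there is no intermediate tableau to speak of; and your proposed fix --- use ``the ordering produced in $(\Rightarrow)$'' --- is circular, since that ordering was built \emph{from} a chain of transpositions.

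The paper avoids this circularity by never trying to build the chain from the $A$-factors. The key input is Lemma~\ref{lem:yhat is Ainverse}: each $A_{i,s}^{-1}$ equals a $\widehat{y}$-variable for the initial Grassmannian seed, and by direct inspection of that seed each $\widehat{y}$ is a ratio $N/D$ of tableaux in $K_0({\rm SSYT}(n,[m]))$ with $N \le D$ differing by a swap of \emph{consecutive} entries in \emph{adjacent} rows. Thus $M_T \le M_{T'}$ transports, via the monoid isomorphism and the same-content hypothesis, to an equation $T = T' \cdot \prod_j (N_j/D_j)$ in $K_0({\rm SSYT}(n,[m]))$. Since ${\rm sh}((-)[i])$ is additive under $\cup$ and each $N_j \le D_j$, one reads off $T \le T'$ directly --- no chain needed. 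For $(\Rightarrow)$ the paper again works in $K_0({\rm SSYT})$: it first realizes swaps of consecutive entries in adjacent rows as single $\widehat{y}$'s, then builds arbitrary transpositions from these by (i) multiplying by a trivial tableau to enable swaps of non-consecutive entries and (ii) telescoping across rows. This replaces your unverified product formula with a short structural argument.
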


\begin{proof}
We set $m = n+\ell+1$. We work in the group $K_0({\rm SSYT}(n,[m]))$ of fractions of tableaux. By definition $M_T \leq M_{T'}$ means that $M_{T'}M_T^{-1} \in \mathcal{Q}^+$, and by Lemma~\ref{lem:yhat is Ainverse}, this is equivalent to requiring that $M_T$ equals  $M_{T'}$ times a monomial in the $\widehat{y}$'s with respect to our initial seed in Figure~\ref{fig:initial cluster for Uqslhat5}. So we need to show that $T \leq T' \in {\rm SSYT}(n,[m])$ implies that the equivalence class $T \in  {\rm SSYT}(n,[m],\sim)$ equals $T' \cdot A \in {\rm SSYT}(n,[m],\sim)$, where $A$ is the a monomial in the $\widehat{y}$'s with respect to our initial seed in Figure~\ref{fig:initial cluster for a quotient of Gr(5,10)}. 

To keep track of homogeneity, we prefer to work with $\widehat{y}$'s for $\bbc[\Gr(n,m)]$ rather than in the quotient $\bbc[\Gr(n,m,\sim)]$. By direct inspection of Figure~\ref{fig:initial cluster for a quotient of Gr(5,10)}, each $\widehat{y}$ is a fraction of two tableaux $\frac{N}{D}$, where $N \leq D \in {\rm \SSYT}(n,[m])$. In fact, $D$ is obtained from $N$ by swapping a single pair of entries of the form $x,x+1$, with these entries occupying adjacent rows. For example, the $\widehat{y}$ in the (1,0) entry of the grid amounts to a swap of entries $x,x+1 = 5,6$ in rows 4 and 5 (recall our unusual conventions on grid coordinates from Section~\ref{subsec:Grothendieck ring and Grassmannian}). Moving down the diagonal, the $\widehat{y}$ in entry (2,1),(3,2), and (4,3) corresponding to swapping 
$x,x+1 = 5,6$ in rows $\{3,4\},\{2,3\},\{1,2\}$, respectively. The $\widehat{y}$ for the $(1,1)$ entry of the grid corresponds to swapping entries $4,5$ in rows 3 and 4. Note that swapping entries swapping entries $3,4$ in rows $4,5$ does not appear as a $\widehat{y}$ of any variable in the grid, but this is because $3$ cannot appear in row $4$ of a semistandard tableaux. More subtly, swapping $4,5$ in rows $4,5$ also does not appear as a $\widehat{y}$ of any variable in the grid. But $4,5$ only appear in rows $4,5$ as part of a frozen column $1,2,3,4,5$, and the entries are never able to participate in a swap. With these two subtleties in mind, one can verify that all swaps of consecutive entries in adjacent rows that could occur appear as a $\widehat{y}$ of some entry in the grid. 

First we show that $M_T \leq M_{T'}$ (and $T,T'$ have the same content) implies that $T \leq T'$ in dominance order. Indeed, using the isomorphism of monoids we see that the equation $T = T' \cdot A$ holds in $K_0(n,[m],\sim)$, and therefore a similar equation $BT = T' \cdot A$ holds in $K_0(n,[m])$, where $B$ is a Laurent monomial in trivial tableaux. But $T$ and $T'$ have the same content, and multiplying by $A$ does not change the content (since the $\widehat{y}$'s have the same content in the numerator and denominator). Thus $B =1$ and $T = T' \cdot A$ holds in $K_0(n,[m])$. But this implies that $T \leq T'$ (since $T$ can be obtained from $T'$ by transpositions, each of which lowers in the partial order on tableaux). 

For the other direction, it suffices to assume that $T$ and $T'$ are related by swapping a single pair of entries $x,y$ as in the proof of Lemma~\ref{lem:dominanceimpliesweight}. By two paragraphs previous, repeatedly multiplying by $\widehat{y}$'s corresponds to repeatedly switching adjacent entries in adjacent rows. It remains to prove that, perhaps upon replacing $T$ and $T'$ by $\sim$-equivalent tableaux, we can swap the pair of entries $x,y$ in which $T,T'$ differ by repeatedly swapping consecutive entries in adjacent rows (in the manner described above).  

This relies on two ideas.  First, suppose that we want to swap nonadjacent entries $x,x+2$ in adjacent rows, with the $x$ in row $i$. Multiplying by the frozen variable with $x+1$ in row $i$, we can subsequently multiply by a $\widehat{y}$ that simulates swapping $x+1,x+2$ in rows $i,i+1$, and then multiply by the $\widehat{y}$ that simulates swapping $x,x+1$ in rows $i,i+1$. The result will be divisible again by the same frozen, but with $x,x+2$ swapped. Generalizing this idea in the obvious way, we can swap $x,x+a$ in adjacent rows for any $a \geq 1$. This allows us to relax the requirement that we only perform swaps of adjacent entries. Second, suppose for example that $T'$ and $T$ differ by a pair of entries $x,x+1$ in nonadjacent rows $i < j$ with $j-i \geq 2$. Then one can multiply $T'$ by the $\widehat{y}$ corresponding to switching $x,x+1$ in rows $i,i+1$, and also the $\widehat{y}$ corresponding to switching them in $i+1,i+2$, and so on until $j-1,j$. This product ``telescopes'' and the resulting fraction of tableaux equals $T$ inside $K_0({\rm SSYT}(n,[m]))$. Thus, one can relax the requirement that we only perform swaps in adjacent rows. Combining these two ideas, we can perform an arbitrary transposition, and the result follows. 
\end{proof}

\section{Mutation of tableaux and modules} \label{sec:mutations description}
By Theorem \ref{thm: cluster monomials are tableaux}, every cluster variable in $\bbc[\Gr(n,m,\sim)]$ is of the form $\ch(T)$ for some (real, prime) $T \in {\rm SSYT}(n,[m])$. Starting from the initial seed of $\bbc[\Gr(n,m,\sim)]$, each time we perform a mutation at the cluster variable $\ch(T_k)$, we obtain a cluster variable $\ch(T'_k)$ defined recursively by 
\begin{align*}
\ch(T'_k)\ch(T_k) = \prod_{i \to k} \ch(T_i) + \prod_{k \to i} \ch(T_i),
\end{align*}
with $\ch(T_i)$ the cluster variable at the vertex $i$. On the other hand, Theorem \ref{thm:Hernandez-Leclerc quantum affine algebras and Grassmannians} and Lemma \ref{lem:LMLMprime decomposition} imply that
\begin{align} \label{eq:decomposition of ch(T)ch(T')}
\ch(T_k) \ch(T'_k) = \ch(T_k \cup T'_k) + \sum_{T''} c_{T''} \ch(T'')
\end{align}
for some $T'' \in {\rm SSYT}(n,[m])$, $\wt(T'')<\wt(T_k \cup T'_k)$, $c_{T''} \in \ZZ_{\ge 0}$. Therefore one of the two tableaux $\cup_{i \to k} T_i$ or $\cup_{k \to i} T_i$ has strictly greater weight than the other, and moreover this leading terms agrees with 
$T_k \cup T'_k$ in ${\rm SSYT}(n,[m],\sim)$. Denoting by $\max\{\cup_{i \to k} T_i, \cup_{k \to i} T_i \}$ this higher weight tableau, it follows that the ``new'' cluster variable $\ch(T_k')$ can be computed in the monoid ${\rm SSYT}(n,[m])$:
\begin{align}
T'_k = T^{-1}_k \max\{\cup_{i \to k} T_i, \cup_{k \to i} T_i \}.\label{eq:howtomutate}
\end{align}
In principle, the above argument only specifies what $T'_k$ should be up to $\sim$. However, working in $\CC[\Gr(n,m)]$ rather than in the quotient, the $\ZZ^m$-degree of $\ch(T'_k)$ is determined by the $\ZZ^m$-degrees of the current cluster variables, and \eqref{eq:howtomutate} is the unique choice of $T'_k$ in its equivalence class so that the content of $T'_k$ matches the $\ZZ^m$-degree of the corresponding cluster variable in $\CC[\Gr(n,m)]$.

By the same reasoning, the mutation rule for a module is as follows. 
When we mutate at the vertex $k$ with a cluster variable $\chi_q(M_k)$, the $q$-character $\chi_q(M'_k)$ is given by 
\begin{align*}
\chi_q(M_k)\chi_q(M'_k) &= \prod_{k \to i} \chi_q(M_i) + \prod_{i \to k} \chi_q(M_i), \text{ with } \\
M'_k &= M_k^{-1} \max\{ \prod_{k \to i} M_i, \prod_{i \to k} M_i \}. 
\end{align*}
Again, $\chi_q(M_i)$ is the $q$-character of the module at vertex $i$, and $\max$ denotes taking the higher weight monomial in $\mathcal{P}^+$.

\begin{example} \label{example:exchange relations}
The following are some examples of mutations in $\bbc[\Gr(3,8)]$:
\begin{align*}
& \ch(\begin{ytableau}
1  \\
3 \\
4 
\end{ytableau}) \ch(\begin{ytableau}
2  \\
3 \\
5 
\end{ytableau}) = \ch(\begin{ytableau}
1  \\
3 \\
5 
\end{ytableau}) \ch(\begin{ytableau}
2  \\
3 \\
4 
\end{ytableau}) + \ch(\begin{ytableau}
1  \\
2 \\
3 
\end{ytableau}) \ch(\begin{ytableau}
3  \\
4 \\
5 
\end{ytableau}), \\
& \ch(\begin{ytableau}
2  \\
3 \\
8 
\end{ytableau}) \ch(\begin{ytableau}
1 & 3 & 4  \\
2 & 5 & 6 \\
4 & 7 & 8 
\end{ytableau}) = \ch(\begin{ytableau}
1  \\
2 \\
8 
\end{ytableau}) \ch(\begin{ytableau}
3 & 4  \\
5 & 6 \\
7 & 8
\end{ytableau}) \ch(\begin{ytableau}
2  \\
3 \\
4 
\end{ytableau}) + \ch(\begin{ytableau}
3  \\
4 \\
8 
\end{ytableau}) \ch(\begin{ytableau}
2 & 4  \\
5 & 6 \\
7 & 8 
\end{ytableau}) \ch(\begin{ytableau}
1  \\
2 \\
3 
\end{ytableau}).
\end{align*}
In both cases, we ordered the two terms on the right hand side so that the heigher weight monomial came first. 

The corresponding mutations in $\mathcal{R}_{4}^{A_{2}}$ are:
\begin{align*}
& \chi_q(Y_{2,0}) \chi_q(Y_{1,-3}) = \chi_q(Y_{2,0}Y_{1,-3})\chi_q(\CC) + \chi_q(\CC)\chi_q(\CC) = \chi_q(Y_{2,0}Y_{1,-3}) + 1, \\
& \chi_q(Y_{1,-3}  Y_{1,-5}  Y_{1,-7}  Y_{1,-9}) \chi_q(Y_{1,-1}  Y_{2,-4}  Y_{1,-7}  Y_{2,-6}  Y_{1,-9}) \\
& = \chi_q(Y_{1,-1}  Y_{1,-3}  Y_{1,-5}  Y_{1,-7}  Y_{1,-9}) \chi_q(Y_{2,-4}  Y_{1,-7}  Y_{2,-6}  Y_{1,-9}) \chi_q(\mathbb{C}) \\
& \qquad + \chi_q(Y_{1,-5}  Y_{1,-7}  Y_{1,-9}) \chi_q(Y_{2,-2}  Y_{2,-4}  Y_{1,-7}  Y_{2,-6}  Y_{1,-9}) \chi_q(\CC). 
\end{align*}
\end{example}

\begin{remark}
As far as we know, the previously known method for performing a mutation in $\bbc[\Gr(n,m)]$ was ``guess and check:'' one would make an educated guess what the neighboring cluster variable should be based on its $\ZZ^m$-degree (e.g., by exhausting over all webs of the right grading, cf.~Section~\ref
{sec:Fomin Pylyavskyy conjecture}), and then verify algebraically that such a guess solved the required exchange relation. Our observation is that mutations are in fact controlled by the monoid ${\rm SSYT}(n,[m])$. For example, the exchange graph of the cluster algebra can be computed (to any desired mutation depth) purely in this monoid. 
\end{remark} 

\begin{remark}
By the results of Section~4, together with the recurrence for $g$-vectors implied by the sign-coherence theorem (cf. e.g. \cite{Mul}), we have the following interpretation of green versus red vertices with respect to our initial seed. Consider a cluster $\ch(T_1),\dots,\ch(T_N)$ for the Grassmannian, and $\ch(T_k)$ a mutable variable in this cluster. Let $T^+ = \prod_{i \to k} T_i$ and $T^-= \prod_{k \to i}T_i$ be the $\cup$-product of all tableaux pointing inwards (resp. outwards) at $T_k$ in this cluster. Then $T_k$ is either green or red in its cluster according to whether $T^+$ or $T^-$ is larger in the partial order on tableaux. For example, all initial variables are green, and the inwards monomial has higher weight than the outwards monomial for every mutable vertex in Figure~\ref{fig:initial cluster for a quotient of Gr(5,10)}. Corollary \ref{corollary:correspondence between g vectors and real tableaux} gives a tableau-theoretic interpretation of $c$-vectors, and thus an alternative rule for determining green versus red-ness. 
\end{remark}

\section{\texorpdfstring{A $q$-character formula and formula for $\ch(T)$}{Character formulas}} \label{sec:qcharacter formulas}
\subsection{\texorpdfstring{Representations of $p$-adic groups}{Representations of p-adic groups}}
We recall certain results about representations of $p$-adic groups, \cites{BZ,Zel,LM}. 

Let $F$ be a non-archimedean local field with a normalized
absolute value $|\cdot|$. For any reductive group $G$ over $F$, let $\mathcal{C}(G)$ be the category of complex, smooth representations of $G(F)$ of finite length and let $\text{Irr}G$ be the set of irreducible objects of $\mathcal{C}$ up to equivalence. Let $G_n=GL_n(F)$, $n =0, 1, 2, \ldots$. For $\pi_i \in \mathcal{C}(G_{n_i})$, $i=1,2$, denote by $\pi_1 \times \pi_2 \in \mathcal{C}(G_{n_1+n_2})$ the representation which is parabolically induced from $\pi_1 \otimes \pi_2$. Denote by $\mathcal{R}^G_n$ (resp. $\mathcal{R}^G$) the Grothendieck ring of $\mathcal{C}(G_n)$ (resp. $\mathcal{C}=\oplus_{n \ge 0} \mathcal{C}(G_n)$). Then $\mathcal{R}^G = \oplus_{n \ge 0} \mathcal{R}^G_n$ is a commutative graded ring under $\times$.  Denote $\text{Irr} = \cup_{\ge 0} {\rm Irr}G_n$ and denote by ${\rm Irr}_c \subset {\rm Irr}$ the subset of supercuspidal representations of $G_n$, $n > 0$. For $\pi \in \mathcal{C}(G_n)$, we denote $\deg(\pi) = n$. 

For $\rho \in {\rm Irr}_c$, we denote $\overrightarrow{\rho} = \rho \nu$, $\overleftarrow{\rho}=\rho \nu^{-1}$, where $\nu$ is the character $\nu(g) = |\det(g)|$. A {\sl segment} is a finite nonempty subset of ${\rm Irr}_c$ of the form $\Delta = \{\rho_1, \ldots, \rho_k\}$, where $\rho_{i+1} = \overrightarrow{\rho_i}$, $i \in [k-1]$. We denote $b(\Delta)=\rho_1$, $e(\Delta)=\rho_k$, and $\deg(\Delta) = \sum_{i=1}^k \rho_i$ ($\deg(\Delta)$ is called the degree of $\Delta$). Usually we write $\Delta$ as $[b(\Delta), e(\Delta)]$. For a segment $\Delta =\{\rho_1, \ldots, \rho_k\}$, we denote ${\rm Z}(\Delta) = {\rm soc}(\rho_1 \times \cdots \times \rho_k) \in {\rm Irr} G_{\deg \Delta}$, where ${\rm soc}(\pi)$ denotes the socle of $\pi$, i.e., the largest semisimple subrepresentation of $\pi$. We use the convention that ${\rm Z}(\emptyset)=1$.

A {\sl multi-segment} is a formal finite sum ${\bf m} = \sum_{i=1}^k \Delta_i$ of segments. Let $\mathcal{M}$ denote the resulting commutative monoid of multi-segments. Denote $\deg {\bf m} = \sum_{i=1}^k \deg(\Delta_i)$. Denote
\begin{align*}
\overleftarrow{\Delta} = \{\overleftarrow{\rho_1}, \ldots, \overleftarrow{\rho_k}\}, \quad \overrightarrow{\Delta} = \{\overrightarrow{\rho_1}, \ldots, \overrightarrow{\rho_k}\}.
\end{align*}
For two segments $\Delta_1, \Delta_2$, say that $\Delta_1$ {\sl precedes} $\Delta_2$ (denoted by $\Delta_1 \prec \Delta_2$) if
\begin{align} \label{eq:Delta1 precede Delta2}
b(\Delta_1) \not\in \Delta_2, \ b(\overleftarrow{\Delta_2}) \in \Delta_1, \ e(\Delta_2) \not\in \Delta_1.
\end{align}

Let ${\bf m} = \sum_{i=1}^k \Delta_i$ be a multi-segment. We may assume that $\Delta_i \not\prec \Delta_j$ for all $i<j$ (any multi-segment can be ordered in this way). We denote 
\begin{align*}
\zeta({\bf m}) = {\rm Z}(\Delta_1) \times \cdots \times {\rm Z}(\Delta_k) \in \mathcal{C}(G_{\deg {\bf m}}),
\end{align*}
and ${\rm Z}({\bf m}) = {\rm soc}(\zeta({\bf m})) \in {\rm Irr}G_{\deg {\bf m}}$. The map ${\bf m} \in \mathcal{M} \mapsto {\rm Z}({\bf m})$ defines a bijection between $\mathcal{M}$ and ${\rm Irr}$, see \cites{BZ,Zel}.

From now on we fix $\rho \in {\rm Irr}_c$ and write a segment $\{\rho \nu^i: i \in [a,b] \}$ as $[a,b]$, $a, b \in \ZZ$, $a \le b$. 

Let $H_N$ ($N \in \ZZ_{\ge 1}$) be the Iwahori-Hecke algebra of $GL_N(F)$ and let $I_N$ be the standard Iwahori-subgroup of $GL_N(F)$. Then each finite-dimensional representation of $H_N$ can be identified with the subspace of $I_N$-fixed vectors in a smooth finite-length representation of $GL_N(F)$ \cite{LNT}. The category of finite-dimensional representations of $H_N$ is equivalent to the category of smooth finite-length representations of $GL_N(F)$ which are generated by the vectors which are fixed under the Iwahori subgroup.  

Chari and Pressley \cite{CP96b} proved that when $N \le n$, there is an equivalence between the category of finite-dimensional representation of $H_N$ and the subcategory of finite-dimensional representations of $U_q(\widehat{\mathfrak{sl}_n})$ consisting of those representations whose irreducible components under $U_q(\mathfrak{sl}_n)$ all occur in the $N$-fold tensor product of the natural representation of $U_q(\mathfrak{sl}_n)$.

By the theorem in \cite[Section 7.6]{CP96b}, under the equivalence of categories, multi-segments and dominant monomials are identified via the following correspondence between segments and fundamental monomials: 
\begin{equation}\label{eq:multisegtomonom}
[a, b] \mapsto Y_{b-a+1, a+b-1},  \hspace{1cm} Y_{i,s} \mapsto [\frac{s-i+2}{2}, \frac{s+i}{2}].
\end{equation}
We denote this correspondence by ${\bf m} \mapsto M_{\bf m}$ and $M \mapsto {\bf m}_M$ accordingly. 
We interpret an expression $M_{[a,a-1]}$ as encoding the trivial monomial 
$1 \in \mathcal{P}_+$ (noting that $[a,a-1]$ is not a segment). Likewise, we interpret $M_{[a,b]}$ with $b < a-1$ as encoding zero. Thus for any k-tuples $(\mu, \lambda) \in \ZZ^k \times \ZZ^k$, we can define a multi-set: 
\begin{align*}
{\rm Fund}_M(\mu, \lambda) = \{M_{[\mu_i,\lambda_i]}: i \in [k] \}.
\end{align*}

\begin{example}
Let $M=Y_{2,0} Y_{2,-4} Y_{4,-4} Y_{2,-8}$. Then
\begin{align*}
{\bf m}_M = [0, 1] + [-3, 0] + [-2, -1] + [-4, -3].
\end{align*}
\end{example}

\subsection{\texorpdfstring{A $q$-character formula and formula for $\ch(T)$}{A q-character formula and formula for ch(T)}}
We denote by $S_k$ denote the symmetric group on $k$ symbols, denote by $\ell(w)$ the Coxeter length of $w \in S_k$, and denote by $w_0 \in S_k$ the longest permutation.

For $\lambda = (\lambda_1, \ldots, \lambda_k) \in \ZZ^k$, denote by $S_{\lambda}$ the subgroup of $S_k$ consisting of elements $\sigma$ such that $\lambda_{\sigma(i)} = \lambda_i$. For $\mu=(\mu_1, \ldots, \mu_k)$, $\lambda = (\lambda_1, \ldots, \lambda_k) \in \ZZ^k$, we denote ${\bf m}_{\mu, \lambda} = \sum_{i=1}^k [\mu_i, \lambda_i]$. 

The action of $S_k \times S_k$ on $\ZZ^k \times \ZZ^k$ by permutation of coordinates determines an action on formal sums $(w,v) \cdot {\bf m}_{\mu, \lambda} = {\bf m}_{w \cdot \mu, v \cdot \lambda}$, for $(w,v) \in S_k \times S_k$. Using ${\bf m}_{w \cdot \mu, v \cdot \lambda} = {\bf m}_{v^{-1}w \cdot \mu, \lambda}$, it is clear that any formal sum as above can be written in the form
\begin{equation}\label{eq:lammu}
{\bf m}_{w \cdot \mu, \lambda} \text{ for } \lambda,\mu \in \ZZ^k \text{ weakly decreasing and } w \in S_k. 
\end{equation} 
Moreover, the formal sums ${\bf m}_{w \cdot \mu, \lambda}$ and ${\bf m}_{w' \cdot \mu, \lambda}$ are equal (possibly with terms rearranged) if and only if $w'$ is in the double coset $S_\lambda w S_\mu$. Since such a double coset is finite, one knows that it contains a unique permutation of maximal length (cf.~\cite[Sections 2.4, 2.5]{Bou}, \cite[Proposition 2.3]{Kob}, and \cite[Proposition 2.7]{BKPST}). For a multisegment ${\bf m}$ with $k$ terms, denote by $\mu_{\bf m},\lambda_{\bf m} \in \ZZ^k$ and $w_{\bf m} \in S_k$ the permutation of maximal length such that \eqref{eq:lammu} holds. We can equivalently index these quantities as $\mu_{M},\lambda_{M},w_M$ where $M = M_{\bf m}$ is the corresponding dominant monomial, or as $\mu_{T},\lambda_{T},w_T$, where $T = \widetilde{\Phi}(M)$ is the corresponding equivalence class of tableaux. 

For an object $M$ in $\mathcal{C}^G$, denote by $[M]$ the class of $M$ in $\mathcal{R}^G$. The following result is originally due to Arakawa-Suzuki \cites{AS, BaCi, Hen, LM}.
\begin{theorem} [{ \cite{AS}, \cite[Section 10.1]{LM} }] \label{thm:multi-segment character formula}
Let ${\bf m}$ be a multi-segment with $\lambda_{\bf m}, \mu_{\bf m} \in \ZZ^k, w_{\bf m} \in S_k$ as just defined. Then
\begin{align*}
[{\rm Z}({\bf m})] = \sum_{u \in S_k} (-1)^{\ell(u w_{{\bf m}})} p_{uw_0, w_{{\bf m}}w_0}(1) [ \zeta( {\bf m}_{u \mu, \lambda} ) ],
\end{align*} 
where $p_{y,y'}(1)$ ($y,y' \in S_k$) is the value at $1$ of the Kazhdan-Lusztig polynomial $p_{y,y'}(t)$. 
\end{theorem}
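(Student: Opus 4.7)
The plan is to deduce the formula from Zelevinsky's Kazhdan--Lusztig--type multiplicity theorem for $GL_N(F)$ (proved by Ginzburg via quiver varieties and independently by Arakawa--Suzuki via combinatorial functor techniques), and then to reinterpret the resulting sum over multi-segments as a sum over the symmetric group $S_k$. Zelevinsky's formula expresses
$$[{\rm Z}({\bf m})] \;=\; \sum_{{\bf m}'} \pm\, P^Z_{{\bf m},{\bf m}'}(1)\, [\zeta({\bf m}')],$$
where ${\bf m}'$ ranges over multi-segments with the same ``content'' (same underlying multiset of $\rho\nu^i$) as ${\bf m}$, the coefficients $P^Z_{{\bf m},{\bf m}'}(t)$ are the Zelevinsky KL polynomials, and the sign is determined by the combinatorial dimensions of the two multi-segments.

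First I would set up the parametrization of these multi-segments by symmetric-group elements. Since $\mu = \mu_{\bf m}$ and $\lambda = \lambda_{\bf m}$ are weakly decreasing, the assignment $u \mapsto {\bf m}_{u\mu,\lambda}$ gives a surjection $S_k \to \{{\bf m}' : \text{content}({\bf m}') = \text{content}({\bf m})\}$ whose fibers are exactly the double cosets in $S_\lambda \backslash S_k / S_\mu$. Each such double coset contains a unique element of maximal length, and these maximal representatives are in bijection with the multi-segments of the required content; by construction $w_{\bf m}$ is the maximal representative corresponding to ${\bf m}$ itself.

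Next I would translate the Zelevinsky polynomials into the symmetric-group Kazhdan--Lusztig polynomials $p_{y,y'}$ appearing in the statement. Using the double-coset dictionary above, one has
$$P^Z_{{\bf m},\,{\bf m}_{w\mu,\lambda}}(t) \;=\; p_{ww_0,\; w_{\bf m} w_0}(t),$$
whenever $w$ is the longest representative of its double coset, with the involution $x \mapsto xw_0$ converting the ``reverse Bruhat'' order natural on the Zelevinsky side to the usual one, and with the sign becoming $(-1)^{\ell(ww_{\bf m})}$. This identification is standard and can be checked either by comparing the recursions for both families of polynomials or by invoking the Ariki--Grojnowski categorification.

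Finally I would rewrite the sum over multi-segments (equivalently over longest double-coset representatives) as a sum over all $u \in S_k$. This amounts to proving the collapse identity
$$\sum_{u \in S_\lambda w S_\mu} (-1)^{\ell(uw_{\bf m})}\, p_{uw_0,\; w_{\bf m} w_0}(1) \;=\; (-1)^{\ell(ww_{\bf m})}\, p_{ww_0,\; w_{\bf m} w_0}(1),$$
valid whenever $w$ is the longest element of its double coset. The main obstacle is verifying precisely this identity: one must combine Deodhar-type recursions for KL polynomials with careful parity bookkeeping to show that the contributions from non-maximal representatives cancel pairwise (pairs related by a simple reflection in $S_\lambda$ or $S_\mu$). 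Once this collapse is established, the formula follows by summing over the double cosets and comparing with Zelevinsky's expression. The corresponding formula for $\ch(T)$ and the $q$-character formula are then immediate corollaries via the correspondences $M \mapsto {\bf m}_M$ of \eqref{eq:multisegtomonom} and $M \mapsto \widetilde{\Phi}(M)$ of Theorem~\ref{thm: parametrization of simple modules by tableaux}.
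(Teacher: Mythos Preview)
The paper does not prove this theorem; it is quoted verbatim as a result of Arakawa--Suzuki, with the precise formulation attributed to \cite[Section~10.1]{LM}. There is therefore no ``paper's own proof'' to compare against---the authors treat it as input and immediately pass to the translations in Theorems~\ref{cor:qcharacter formula} and~\ref{cor:qcharacter formula tableaux}.

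Your outline is a reasonable reconstruction of how one arrives at this statement by tracing through the cited literature (Zelevinsky's multiplicity conjecture, its proof via \cite{GV} or \cite{AS}, and the reformulation in \cite{Hen} and \cite{LM}). Two remarks on the details. First, the identification $P^Z_{{\bf m},{\bf m}'}(t) = p_{ww_0, w_{\bf m}w_0}(t)$ at the longest double-coset representative and your ``collapse identity'' are both correct, but they are not independent facts requiring separate Deodhar-style arguments: they follow together from the standard property that $p_{y,y'} = p_{sy,y'}$ whenever $sy' > y'$, applied with $y' = w_{\bf m}w_0$ (which is minimal in its $(S_\lambda, w_0 S_\mu w_0)$-double coset precisely because $w_{\bf m}$ is maximal). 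This forces $p_{uw_0, w_{\bf m}w_0}$ to be constant on each $S_\lambda u S_\mu$, and then the alternating signs over a coset cancel except at the longest element---no pairing argument is needed. Second, your last sentence about deducing the $\ch(T)$ and $q$-character formulas is exactly what the paper does in the subsequent theorems, so that part is on target.
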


\begin{example}\label{eg:cosets}
Consider the monomial $M = Y_{1,-5} Y_{1,-3} Y_{2,-2} Y_{2,0} \in \mathcal{P}^+$ with ${\bf m}_M=[0,1]+[-1,0]+[-1,-1]+[-2,-2]$. Then 
$\mu_M=(0,-1,-1,-2)$, $\lambda_M=(1,0,-1,-2)$ are the left and right endpoints in sorted order. The subgroups $S_\mu,S_{\lambda} \subset S_4$ are $\{1,s_2\},\{1\}$ respectively. The identity permutation $u = \text{id}$ satisfies ${\bf m}_M = {\bf m}_{u \mu,\lambda}$, and the maximal length element that satisfies this is $u = s_2$.\end{example}

We have the following translation of Theorem \ref{thm:multi-segment character formula} to the language of $q$-characters. We interpret $\chi_q(L(M_{[a,a-1]})) = 1$ and $\chi_q(L(M_{[a,b]})) = 0$ when $b < a-1$, see above.   
\begin{theorem}\label{cor:qcharacter formula}
Let $M \in \mathcal{P^+}$ be a monomial of degree $k$, with $\lambda_M,\mu_M \in \ZZ^k$ and $w_M \in S_k$ as in \eqref{eq:lammu}. Then the $q$-character of the simple $U_q(\widehat{\mathfrak{sl}_n})$-module $L(M)$ is given by 
\begin{align} \label{eq:Kazhdan-Lusztig character formula qcharacter formula}
\chi_q(L(M)) = \sum_{u \in S_k} (-1)^{\ell(uw_M)} p_{uw_0, w_Mw_0}(1) \prod_{M' \in {\rm Fund}_M(u\mu_M, \lambda_M)} \chi_q(L(M')).
\end{align}
\end{theorem}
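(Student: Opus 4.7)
The plan is to derive this as a direct translation of Theorem \ref{thm:multi-segment character formula} under the Chari--Pressley equivalence of categories recalled in Section 5.1. That equivalence identifies, for $N \le n$, the category of finite-dimensional $H_N$-modules with a subcategory of finite-dimensional $U_q(\widehat{\mathfrak{sl}_n})$-modules, and intertwines parabolic induction $\times$ with tensor product. Combined with the bijection \eqref{eq:multisegtomonom} between multi-segments and dominant monomials, it sends $\mathrm{Z}({\bf m}) \mapsto L(M_{\bf m})$ and in particular $\mathrm{Z}(\Delta) \mapsto L(Y_{i,s})$ for each segment $\Delta$ corresponding to a fundamental monomial. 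Applying the $q$-character homomorphism $\chi_q$ therefore turns $[\mathrm{Z}(\Delta_1) \times \cdots \times \mathrm{Z}(\Delta_k)]$ into $\prod_i \chi_q(L(M_{\Delta_i}))$, so the left-hand side of \eqref{eq:Kazhdan-Lusztig character formula qcharacter formula} matches $\chi_q$ applied to the left-hand side of Theorem \ref{thm:multi-segment character formula}.

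Next I would unwind the right-hand side term by term. Given $M \in \mathcal{P}^+$ of degree $k$, fix the triple $(\mu_M, \lambda_M, w_M)$ from \eqref{eq:lammu}, which by construction satisfies ${\bf m}_M = {\bf m}_{w_M \cdot \mu_M, \lambda_M}$ with $w_M$ the maximal-length double-coset representative. For each $u \in S_k$, the formal sum ${\bf m}_{u\mu_M,\lambda_M} = \sum_{i=1}^k [u\mu_{M,i}, \lambda_{M,i}]$ translates under \eqref{eq:multisegtomonom} into the multiset $\mathrm{Fund}_M(u\mu_M, \lambda_M)$ of fundamental monomials, and hence $\zeta({\bf m}_{u\mu_M,\lambda_M})$ translates into $\prod_{M' \in \mathrm{Fund}_M(u\mu_M, \lambda_M)} \chi_q(L(M'))$. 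The combinatorial data $(-1)^{\ell(uw_M)} \, p_{uw_0, w_M w_0}(1)$ depends only on $S_k$ and is unchanged by the translation. Assembling these identifications gives exactly \eqref{eq:Kazhdan-Lusztig character formula qcharacter formula}. Note that the Chari--Pressley equivalence is available for our $M$ once we take $n$ large enough that $N = \deg M$ falls in the allowed range; since \eqref{eq:Kazhdan-Lusztig character formula qcharacter formula} is a polynomial identity in the formal variables $Y_{i,s}$, the result for arbitrary $n$ follows by stability of the $q$-character under truncation.

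The step requiring the most care, and where I expect the main bookkeeping work, is the treatment of the ``degenerate'' terms. For nontrivial $u$ one may well have $u\mu_{M,i} > \lambda_{M,i}$, so $[u\mu_{M,i}, \lambda_{M,i}]$ is not literally a segment; I would verify that the paper's conventions $\chi_q(L(M_{[a,a-1]})) = 1$ and $\chi_q(L(M_{[a,b]})) = 0$ for $b < a-1$ are the correct ones. The first matches $\mathrm{Z}(\emptyset) = 1$ on the $p$-adic side; the second matches the fact that in Theorem \ref{thm:multi-segment character formula} the multi-segment data ${\bf m}_{u\mu_M,\lambda_M}$ is obtained from ${\bf m}_M$ by a permutation within a fixed $S_k$-orbit of endpoint pairs, so terms producing a formal sum with an inadmissible segment must be discarded, and this cancellation is equivalent to declaring the corresponding $\chi_q$ to be zero. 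Once this convention-matching is done, applying $\chi_q$ to both sides of Theorem \ref{thm:multi-segment character formula} yields the desired identity.
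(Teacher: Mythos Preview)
Your proposal is correct and takes essentially the same approach as the paper: the paper presents this theorem with no proof, introducing it simply as ``the following translation of Theorem~\ref{thm:multi-segment character formula} to the language of $q$-characters,'' together with the conventions $\chi_q(L(M_{[a,a-1]}))=1$ and $\chi_q(L(M_{[a,b]}))=0$ for $b<a-1$. Your write-up is in fact more detailed than the paper's, correctly identifying the Chari--Pressley equivalence as the mechanism and flagging the $N\le n$ hypothesis; the paper does not address that constraint at all, so your stability remark is a reasonable gesture toward a point the paper leaves implicit.
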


The quantities $\chi_q(L(M'))$ appearing on the right hand side of \eqref{eq:Kazhdan-Lusztig character formula qcharacter formula}, namely $q$-characters of fundamental modules, can be computed via the Frenkel-Mukhin algorithm~\cite{FM}, so this is indeed a formula for the $q$-character of $L(M)$. Ginzburg and Vasserot have given a formula similar to \eqref{eq:Kazhdan-Lusztig character formula qcharacter formula} in geometric language, cf.~\cite[Theorem 3]{Vas} and also \cite{GV}.

\begin{example}\label{eg:firstqcharacter}
Let $M = Y_{2,-4} Y_{1, -1}$ with $k=2$. Then ${\bf m}_M = [0,0] + [-2, -1]$, $\mu_M = (0, -2)$, $\lambda_M = (0, -1)$, $S_{\lambda}=S_{\mu}=\{1\}$ and $w_M = 1$. 
The right hand side of (\ref{eq:Kazhdan-Lusztig character formula qcharacter formula}) has two terms ($u=s_1$ and $u=s_1$): 
\begin{align*}
& (-1)^{\ell(1)} p_{s_1, s_1}(1) \chi_q(M_{[0,0]})\chi_q(M_{[-2,-1]})  = \chi_q(Y_{1,-1})\chi_q(Y_{2,-4}), \\
& (-1)^{\ell(s_1)} p_{1, s_1}(1)  \chi_q(M_{[0, -1]})\chi_q(M_{[-2, 0]}) = - \chi_q(Y_{3,-3}), 
\end{align*}
so that $\chi_q(Y_{2,-4} Y_{1, -1}) = \chi_q(Y_{1,-1})\chi_q(Y_{2,-4}) - \chi_q(Y_{3,-3})$, valid for any $n$. When $n=3$, the formula simplifies to $\chi_q(Y_{2,-4} Y_{1, -1}) = \chi_q(Y_{1,-1})\chi_q(Y_{2,-4}) - 1$. 
\end{example}

Composing \eqref{eq:multisegtomonom} with the bijection in Lemma~\ref{lem:fundamental modules and Plucker coordinates}, we obtain a correspondence between multi-segments and fundamental tableaux in ${\rm SSYT}(n,[m])$ (for any fixed value of $n$): 
\begin{equation}\label{eq:multisegtotab}
[a, b] \mapsto T_{[1-a,1-a+n] \setminus \{n-b\}}.
\end{equation}
Thus, one can directly translate Theorem~\ref{cor:qcharacter formula} to obtain an inhomogeneous formula for $\ch(T) \in \bbc[\Gr(n,m,\sim)]$, replacing $\chi_q(L(M'))$ for a fundamental monomial $M'$ by the corresponding Pl\"ucker coordinate.

Rather than writing down this inhomogeneous formula for $\ch(T) \in \bbc[\Gr(n,m,\sim)]$, we give the ``correct'' homogeneous lift to $\CC[\Gr(n,m)]$. Namely, by Lemma~\ref{lem:plucker and minimal affinization}, every $\sim$-equivalence class contains a unique tableau $T'$ with small gaps. We begin by giving the unique lift of $\ch(T) \in \bbc[\Gr(n,m,\sim)]$ to $\CC[\Gr(n,m)]$ whose $\ZZ^m$-degree is the content of $T'$. Once this is done, we define $\ch(T)$ for all $T$ by homogeneity. 

Let $T \in {\rm SSYT}(n,[m])$ of gap weight $k$ be given, corresponding to $\mu_T,\lambda_T \in \ZZ^k$ and $w_T \in S_k$ as in \eqref{eq:lammu}. It is convenient to repackage the indexing data $\mu_T,\lambda_T$ in the following form. Let $T'$ be the small gaps tableau equivalent to $T$. Clearly $T'$ is the same as the following data: the weakly increasing sequence ${\bf i} = i_1 \leq i_2 \dots \leq i_k$ of entries in the first row, and the elements $r_1,\dots,r_k$ that are ``deleted'' from each column, meaning that the $a$th column of $T'$ has content $[i_a,i_a+n] \setminus \{r_a\}$. We let ${\bf j} = j_1 \leq j_2 \leq \dots \leq j_k$ be the elements $r_1,\dots,r_k$ written in weakly increasing order. In the notation of \eqref{eq:lammu} and \eqref{eq:multisegtotab}, we have that $i_a = 1-\mu_a$ and $j_a = n-\lambda_a$ for $a \in [k]$. Moreover $r_a = j_{w_T^{-1}(a)}$. Rephrasing the above in a way that is more compatible with the next definition, we can say that the columns of $T'$ have content of the form $[i_{w_T(a)},i_{w_T(a)}+n] \setminus \{j_a\}$ for $a \in [k]$ (written this way, we are sorting based on ${\bf j}$ rather than sorting based on ${\bf i}$).

\begin{definition}\label{defn:usemicolonT}
Let $T$ be a small gaps tableau with $k$ columns, with ${\bf i,j} \in \ZZ^k$ the weakly increasing sequences just defined. For $u \in S_k$, define $P_{u;T} \in \CC[\Gr(n,m)]$ as follows. Provided  $j_a \in [i_{u(a)}, i_{u(a)}+n]$ for all $a \in [k]$, define the tableau $\alpha(u;T)$ to be the semistandard tableau whose columns have content $[i_{u(a)}, i_{u(a)}+n] \setminus \{j_a\}$ for $a \in [k]$, and define $P_{u; T} = P_{\alpha(u;T)} \in \CC[\Gr(n,m)]$ to be the corresponding standard monomial. On the other hand, if $j_a \notin [i_{u_a}, i_{u(a)}+n]$ for some $a$, then the tableau $\alpha(u;T)$ is {\sl undefined} and $P_{u ;T} = 0$. 
\end{definition}

The tableau $\alpha(u;T)$ has nonlarge gaps by construction, so the monomial $P_{u;T}$
is indeed standard by Lemma~\ref{lem:nosorting}. Note also that $P_{w_T;T} = P_{T}$, and more generally $P_{u;T} = P_{T}$ for $u \in S_{\lambda}w_TS_{\mu}$. 

\begin{example}
Let $T =\begin{ytableau}
1&1&2&2&3&3&4 \\
2&2&3&3&5&5&5 \\
4&4&4&4&6&6&6 \\
5&5&6&6&7&7&8
\end{ytableau}$ with $\substack{{\bf i} = 1,1,2,2,3,3,4\\{\bf j} = 3,3,4,4,5,5,7}$. The permutation $u\in S_7$ with one-line notation $u = 2165437$ has the property that the sets $\{[i_{u(a)},i_{u(a)}+4] \setminus j_a\}_{a \in [7]}$ describe the columns of $T$. Moreover, $u$ is of maximal length with this property (i.e. $u = w_{T}$). For a general $u \in S_7$, $P_{u;T}$ is nonzero exactly when $u(1) \neq  7$ and $u(7) \geq 5$. If we take $u = 3124576 \in S_7$, then $\alpha(u;T)$ is given by lexicographically sorting the columns of $\begin{ytableau}
2&1&1&2&3&4&3 \\
4&2&2&3&4&6&4 \\
5&4&3&5&6&7&5 \\ 
6&5&5&6&7&8&6
\end{ytableau}
$ and $P_{u;T}$ is the standard monomial with these columns. 
\end{example} 

With these preparations, we have the following corollary of Theorem~\ref{thm:multi-segment character formula} for Grassmannians. 
\begin{theorem} \label{cor:qcharacter formula tableaux}
Let $T \in {\rm SSYT}(n,[m])$ with gap weight $k$, and let $T' \sim T$ the small gaps tableau equivalent to $T$. Let $w_T \in S_k$ be the maximal length permutation described after \eqref{eq:lammu}. Then 
\begin{align}\label{eq:Kazhdan-Lusztig character formula tableau formula}
\ch(T) = \sum_{u \in S_k} (-1)^{\ell(uw_T)} p_{uw_0, w_Tw_0}(1) P_{u; T'} \in \bbc[\Gr(n,m,\sim)]
\end{align}
with $P_{u ; T'}$ the standard monomial just defined.  
\end{theorem}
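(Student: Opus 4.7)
The plan is to derive the tableau formula directly from Theorem~\ref{cor:qcharacter formula} by applying the isomorphism $\Phi: K_0(\mathcal{C}_\ell) \to \bbc[\Gr(n,m,\sim)]$ of Theorem~\ref{thm:Hernandez-Leclerc quantum affine algebras and Grassmannians} term by term. First, set $M = \Psi(T) \in \mathcal{P}^+_\ell$. By Definition~\ref{def:definition of ch(T)}, $\ch(T) = \Phi([L(M)])$. Since the data $(\mu_T, \lambda_T, w_T)$ and $(\mu_M, \lambda_M, w_M)$ are defined identically from the multisegment ${\bf m}_M$ (by the discussion after \eqref{eq:lammu}), these triples coincide, so in particular $w_T = w_M \in S_k$.

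Next, apply $\Phi$ to both sides of the $q$-character formula in Theorem~\ref{cor:qcharacter formula}. The left-hand side becomes $\ch(T)$, while each summand on the right is a product $\prod_{M' \in {\rm Fund}_M(u\mu_M, \lambda_M)} \chi_q(L(M'))$ of fundamental $q$-characters. Under $\Phi$, Lemma~\ref{lem:fundamental modules and Plucker coordinates} identifies each such fundamental class with a fundamental Pl\"ucker coordinate, and composing with the conversions \eqref{eq:multisegtomonom} and \eqref{eq:multisegtotab} gives the explicit assignment $[a,b] \mapsto P_{[1-a,1-a+n] \setminus \{n-b\}}$. It remains to verify that the resulting product of Pl\"ucker coordinates is exactly $P_{u;T'}$ from Definition~\ref{defn:usemicolonT}. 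Writing $i_a = 1-\mu_a$ and $j_a = n - \lambda_a$ as in the discussion before Definition~\ref{defn:usemicolonT}, direct substitution shows that the multi-set of segments indexed by ${\rm Fund}_M(u\mu_M, \lambda_M)$ translates to the multi-set of columns $\{[i_{u(a)},i_{u(a)}+n] \setminus \{j_a\}\}_{a \in [k]}$, which is precisely the column data of $\alpha(u;T')$. Since each such column is either fundamental or trivial, Lemma~\ref{lem:nosorting} guarantees that the unordered product of Pl\"ucker coordinates coincides with the standard monomial $P_{\alpha(u;T')}$, even after lexicographic reordering.

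Two edge cases must be handled. A degenerate segment $[a,a-1]$ appearing in ${\rm Fund}_M(u\mu_M, \lambda_M)$ contributes $\chi_q(L(M_{[a,a-1]})) = 1$ under our convention, and corresponds to a trivial column on the tableau side, which equals $1$ in $\bbc[\Gr(n,m,\sim)]$ by the definition of the quotient. A formal expression $[a,b]$ with $b < a-1$ contributes $0$, and on the tableau side gives a violation $j_a \notin [i_{u(a)}, i_{u(a)}+n]$ of the condition in Definition~\ref{defn:usemicolonT}, yielding $P_{u;T'} = 0$. With both edge cases matched, every term of the $q$-character formula transports to the corresponding term of \eqref{eq:Kazhdan-Lusztig character formula tableau formula}.

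The main obstacle is the index bookkeeping: pinning down the conventions under which the symmetric group action on multisegment endpoints matches the one on columns of $T'$, and then checking that the correspondence preserves the Kazhdan-Lusztig coefficients $(-1)^{\ell(uw_T)}p_{uw_0,w_Tw_0}(1)$ verbatim. Once $w_T = w_M$ is established and the dictionary $[a,b] \leftrightarrow$ column content is fixed, the coefficients are identical by construction, so the equality of the two sides follows summand by summand.
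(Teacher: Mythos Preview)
Your proposal is correct and follows exactly the approach the paper intends: the paper presents Theorem~\ref{cor:qcharacter formula tableaux} as a direct corollary of Theorem~\ref{thm:multi-segment character formula} (via Theorem~\ref{cor:qcharacter formula}), obtained by applying $\Phi$ term by term and using the dictionary \eqref{eq:multisegtotab} between segments and fundamental tableaux, without writing out any further details. Your write-up supplies precisely the bookkeeping the paper leaves implicit, including the identification $w_T = w_M$, the match between ${\rm Fund}_M(u\mu_M,\lambda_M)$ and the columns of $\alpha(u;T')$ via $i_a = 1-\mu_a$, $j_a = n-\lambda_a$, and the handling of the degenerate segments $[a,a-1]$ and $[a,b]$ with $b<a-1$.
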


Though the equality \eqref{eq:Kazhdan-Lusztig character formula tableau formula} holds in $\CC[\Gr(n,m,\sim)]$, the right hand side makes sense in $\bbc[\Gr(n,m)]$. It is homogeneous, with $\ZZ^m$-degree the content of the small gaps tableau $T'$. 

Let $\bbc[\Gr^\circ(n,m)]$ denote the localization of $\bbc[\Gr(n,m)]$ at all the frozen variables. For our purposes, we in fact only need to localize at the trivial frozen variables, but the localization $\bbc[\Gr^\circ(n,m)]$ is a more familiar object (it is the homogeneous coordinate ring of the {\sl open positroid variety}). 
The next definition lifts $\ch(T)$ from $\bbc[\Gr(n,m,\sim)]$ to $\CC[\Gr^\circ(n,m)]$. 
\begin{definition}\label{defn:chTforGr}
Let $T \in {\rm SSYT}(n,[m])$ and let $T = T'' \cup T'$ where $T'$ has small gaps and $T'' \in K_0({\rm SSYT}(n,[m]))$ is a fraction of two trivial tableaux (cf.~Remark~\ref{rmk:TtoTprime}). Define 
\begin{equation}\label{eq:liftofchT}
\ch(T) = P_{T''} \ch(T') \in \bbc[\Gr^\circ(n,m)]\end{equation}
where $\ch(T')$ is as defined in the right hand side of \eqref{eq:Kazhdan-Lusztig character formula tableau formula}, and $P_{T''}$ is the Laurent monomial in trivial frozen Pl\"ucker coordinates corresponding to $T''$.
\end{definition}

We conjecture that the elements $\{\ch(T)\}_{T \in {\rm SYYT}(n,[m])}$ lie in $\bbc[\Gr(n,m)]$, not merely in the localization $\bbc[\Gr^\circ(n,m)]$. We provide evidence that they are Lusztig's dual canonical basis (also known as Kashiwara's upper global base) for $\bbc[\Gr(n,m)]$. In a very closely related setting, it is already known that the basis of simples in a monoidal categorification matches the dual canonical basis \cite{HL15}. 

\begin{remark}\label{rmk:clearing}
The right hand side of \eqref{eq:liftofchT} is the unique homogeneous lift of $\ch(T) \in \CC[\Gr(n,m,\sim)]$ to $\CC[\Gr^\circ(n,m)]$ whose $\ZZ^m$-degree matches the content of $T$. However, because the formula \eqref{eq:liftofchT} can have frozen variables in the denominator, it is not obvious that this lift lies in 
$\CC[\Gr(n,m)]$ rather than in the localization $\CC[\Gr^\circ(n,m)]$. For cluster monomials, this well-behavedness is clear by the discussion in Section \ref{sec:mutations description}. We give further evidence that the lifts lie in $\bbc[\Gr(n,m)]$ for arbitrary tableaux by checking this for the ``smallest'' nonreal tableaux in Example~\ref{eg:catalogue}.  
\end{remark}

The following proposition follows from Theorem \ref{thm:Hernandez-Leclerc quantum affine algebras and Grassmannians}, Equation (\ref{eq:expression of Phi(L(M)) in terms of tableaux}), and Theorem~\ref{cor:qcharacter formula}.
\begin{proposition} \label{prop: characher formula in terms of semistandard tableaux}
For a tableau $T \in {\rm SSYT}(n,[m])$, 
\begin{align} \label{eq:ch(T) using semistandard tableau}
\ch(T)= P_T + P_{T''}\sum_{S} c_{S} P_{S} \in \bbc[\Gr^\circ(n,m)],
\end{align}
where the sum is over tableaux $S \in {\rm SSYT}(n,[m])$ with lower weight than $T$, $c_S \in \ZZ$, and $P_{T''}$ is the Laurent monomial in trivial frozen variables above. 
\end{proposition}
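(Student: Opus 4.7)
The plan is to reduce the statement to the small-gaps case and then read off the leading standard monomial from the Arakawa-Suzuki-type formula of Theorem~\ref{cor:qcharacter formula tableaux}. By Definition~\ref{defn:chTforGr}, $\ch(T) = P_{T''}\ch(T')$, where $T'$ is the unique small-gaps representative of the $\sim$-class of $T$ and $P_{T''}$ is a Laurent monomial in trivial frozen Pl\"ucker coordinates; moreover $P_T = P_{T''}P_{T'}$ in $\CC[\Gr^\circ(n,m)]$, since $T = T'' \cup T'$ in $K_0({\rm SSYT}(n,[m]))$ (cf.~Remark~\ref{rmk:TtoTprime}). It therefore suffices to show that $\ch(T') = P_{T'} + \sum_{S'} c_{S'} P_{S'}$ in $\CC[\Gr(n,m,\sim)]$, where the sum runs over small-gaps tableaux $S' \neq T'$ with $\wt(S') < \wt(T')$ and $c_{S'} \in \ZZ$.

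To identify the coefficient of $P_{T'}$ in $\ch(T')$, I would apply Theorem~\ref{cor:qcharacter formula tableaux}, which expresses $\ch(T')$ as the signed sum $\sum_u(-1)^{\ell(uw_{T'})} p_{uw_0, w_{T'}w_0}(1)\, P_{u;T'}$. The permutations $u$ with $P_{u;T'} = P_{T'}$ form precisely the double coset $S_\lambda w_{T'}S_\mu$, in which $w_{T'}$ is by construction the element of maximal length; the standard fact that the longest element of a parabolic double coset Bruhat-dominates the rest then gives $u < w_{T'}$ in Bruhat order, hence $uw_0 > w_{T'}w_0$ and $p_{uw_0, w_{T'}w_0}(t) \equiv 0$, for every other $u$ in the coset. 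Only $u = w_{T'}$ survives, contributing $(-1)^{0} \cdot p_{w_{T'}w_0,w_{T'}w_0}(1) \cdot P_{T'} = +P_{T'}$.

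The remaining and essential step is to show that every other nonzero standard monomial $P_{u;T'}$ appearing in the Arakawa-Suzuki sum satisfies $\wt(\alpha(u;T')) < \wt(T')$. For this I would appeal to Lemma~\ref{lem:weight of module is the same as the weight of tableau corresponding to the module}, which identifies $T'$ as the \emph{unique} highest-weight small-gaps tableau in the standard-monomial expansion of $\Phi([L(M_T)]) = \ch(T')$---this is the place where the inductive structure of products of simple modules (Lemma~\ref{lem:LMLMprime decomposition}) is essential, and where the uniqueness, rather than mere existence, of the Top is really needed. Combined with the coefficient computation of the previous paragraph, every tableau $\alpha(u;T') \neq T'$ arising in the formula must lie strictly below $T'$ in weight. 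Multiplying the resulting expansion of $\ch(T')$ through by $P_{T''}$, absorbing the factor $P_{T''}P_{T'}$ into $P_T$, and relabeling $S = S'$ (with all other coefficients $c_S = 0$), yields the stated form in $\CC[\Gr^\circ(n,m)]$.
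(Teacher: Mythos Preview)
Your overall strategy---reduce to the small-gaps representative $T'$, extract the coefficient of $P_{T'}$ from Theorem~\ref{cor:qcharacter formula tableaux} via the maximal-length property of $w_{T'}$ in its double coset, and invoke Lemma~\ref{lem:weight of module is the same as the weight of tableau corresponding to the module} for the remaining terms---matches the ingredients the paper cites and is correct as far as the expansion of $\ch(T')$ is concerned.

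The gap is in the last paragraph. The claim ``$P_T = P_{T''}P_{T'}$ in $\CC[\Gr^\circ(n,m)]$, since $T = T'' \cup T'$ in $K_0({\rm SSYT}(n,[m]))$'' is false: the assignment $S \mapsto P_S$ is \emph{not} a monoid homomorphism from $({\rm SSYT}(n,[m]),\cup)$ to Pl\"ucker monomials. Lemma~\ref{lem:nosorting} establishes $P_S P_{S'} = P_{S \cup S'}$ only when both factors have nonlarge gaps, and $T$ typically does not. For the tableau $T$ of Example~\ref{eg:factorourexample} (columns $[1,3,5],[2,4,6]$) one has $P_T = P_{135}P_{246}$, whereas $P_{T''}P_{T'} = (P_{234}P_{345})^{-1}P_{134}P_{235}P_{245}P_{346}$; equality would force the non-standard monomial $P_{135}P_{246}P_{234}P_{345}$ to coincide with the standard monomial $P_{134}P_{235}P_{245}P_{346}$, which it does not (cf.\ the computations in Example~\ref{example:character of the tableau 135246}).

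What your argument actually yields is $\ch(T) = P_{T''}P_{T'} + P_{T''}\sum_{S'}c_{S'}P_{S'}$. To reach the stated form you must still show that $P_{T''}P_{T'} - P_T$ is a combination of lower-weight terms. Writing $T'' = AB^{-1}$ with $A,B$ trivial, Lemma~\ref{lem:nosorting} gives $P_A P_{T'} = P_{A \cup T'} = P_{B \cup T}$, so the problem reduces to the standard straightening fact that the expansion of the (generally non-standard) monomial $P_B P_T$ in the standard basis has $P_{B \cup T}$ as its unique dominance-maximal term with coefficient~$1$; Lemma~\ref{lem:dominanceimpliesweight} then converts dominance into weight. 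This step is routine but is not supplied by the lemmas you invoked.
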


In particular, the passage from standard monomials $\{P_T\}$ to the elements $\{\ch(T)\}$ is triangular, so that the $\{\ch(T)\}$ are linearly independent.

\begin{example}\label{eg:124356}
We illustrate the formula~\eqref{eq:Kazhdan-Lusztig character formula tableau formula} for the simplest non-Pl\"ucker cluster variable, labeled by the tableau with columns $[1,2,4],[3,5,6]$. This tableau has small gaps, and has ${\bf i} = 1,3$, ${\bf j} = 3,4$, $w_T = {\rm id}$. It corresponds to the monomial 
$M = Y_{2,-4} Y_{1, -1}$ from Example~\ref{eg:firstqcharacter}.  
Applying the formula~\eqref{eq:Kazhdan-Lusztig character formula tableau formula} directly yields 
\begin{align}\label{eq:124356}
\ch( \begin{ytableau} 1 & 3 \\ 2 & 5 \\ 4 & 6 \end{ytableau} ) = P_{124}P_{356} - P_{123}P_{456},
\end{align}
with the first term corresponding to $u = {\rm id}$ and the second corresponding to $u = s_1$. This quadratic expression in Pl\"ucker coordinates is a cluster variable. An alternative way to compute the right hand side of \eqref{eq:124356} is to translate the $n=3$ version of Example~\ref{eg:firstqcharacter} using the correspondence between fundamental monomials and fundamental tableaux. The result is $P_{124}P_{356}-1$, which lifts to the right hand side of \eqref{eq:124356} using homogeneity. 

If instead we translate the $n=4$ instance of Example~\ref{eg:firstqcharacter}, the tableau formula is
\begin{align*}
\ch( \begin{ytableau} 1 & 3 \\ 2 & 4 \\ 3 & 6 \\ 5 & 7 \end{ytableau} ) = P_{1235}P_{3467} - P_{3567} = P_{1235}P_{3467} - P_{1234}P_{3567}.
\end{align*}
\end{example}

\begin{example} \label{example:qcharacter of a module}
We now address the other non-Pl\"ucker cluster variable in $\CC[\Gr(3,6)]$, corresponding to the tableau with columns $[1,3,5],[2,4,5]$. It has gap weight $k=4$ and $w_M = w_T = s_2 \in S_4$. 

The corresponding monomial is $M=Y_{1,-5} Y_{1,-3} Y_{2,-2} Y_{2,0}$ (cf.~Example~\ref{eg:cosets}). The Kazhdan-Lusztig polynomials $\{p_{w'w_0, s_2w_0}(t) \colon w' \in S_k\}$ are in the set $\{0,1,1+t\}$. Thus $p_{w'w_0, s_2w_0}(1) \in \{0,1,2\}$. Exactly twenty of these are nonzero, but there are certain cancellations. 
By the alternating nature of \eqref{eq:Kazhdan-Lusztig character formula qcharacter formula}, the contributions of $w',w's_2$ cancel whenever their K-L polynomials match. After these and similar cancellations, \eqref{eq:Kazhdan-Lusztig character formula qcharacter formula}) simplifies to
\begin{align} \label{eq: example of qcharacter formula}
\begin{split}
\chi_q(M) & = \chi_q(Y_{2,-2}) \chi_q(Y_{4,-2}) - \chi_q(Y_{3,-1}) \chi_q(Y_{3,-3}) + \chi_q(Y_{1,-1}) \chi_q(Y_{3,-1}) \chi_q(Y_{2,-4}) \\
& \quad - \chi_q(Y_{2,0}) \chi_q(Y_{2,-2}) \chi_q(Y_{2,-4}) - \chi_q(Y_{1,-1}) \chi_q(Y_{1,-3}) \chi_q(Y_{3,-1}) \chi_q(Y_{1,-5}) \\
& \quad + \chi_q(Y_{2,0}) \chi_q(Y_{1,-3}) \chi_q(Y_{2,-2}) \chi_q(Y_{1,-5}), 
\end{split}
\end{align}
valid for any~$n$. 

Specializing to $n=3$, certain terms vanish and we obtain 
\begin{align}\label{eq:nequals3q}
\begin{split}
\chi_q(M) & = - 1 + \chi_q(Y_{1,-1}) \chi_q(Y_{2,-4}) - \chi_q(Y_{2,0}) \chi_q(Y_{2,-2}) \chi_q(Y_{2,-4}) \\
& \quad - \chi_q(Y_{1,-1}) \chi_q(Y_{1,-3}) \chi_q(Y_{1,-5}) + \chi_q(Y_{2,0}) \chi_q(Y_{1,-3}) \chi_q(Y_{2,-2}) \chi_q(Y_{1,-5}).
\end{split}
\end{align}

To compute $\ch(T)$, we must first compute $\ch(T')$ where $T = T'' \cup T'$ as in Example~\ref{eg:factorourexample}. With the same KL cancellations as above, we have \begin{equation}\label{eq:chT'}
\begin{split}
\ch(T') =& -P_{123}P_{234}P_{345}P_{456}+P_{124}P_{234}P_{345}P_{356}-P_{134}P_{234}P_{245}P_{356} \\
        &-P_{124}P_{235}P_{345}P_{346}+ P_{134}P_{235}P_{245}P_{346}.
\end{split}
\end{equation}
Then by definition, $\ch(T) = \frac{\ch(T')}{P_{234}P_{345}}.$ Remark~\ref{rmk:clearing} asserts that $\ch(T')$ is in fact divisible by $P_{234}P_{345}$, so that $\ch(T)$ is in $\CC[\Gr(3,6)]$ (not merely in $\CC[\Gr^\circ(3,6)]$). We confirm this directly via Pl\"ucker relations in Example~\ref{example:character of the tableau 135246}, and show that $\ch(T)$ is a cluster variable in Example~\ref{example:chT is equal to web invariant}.

The $n=4$ version of \eqref{eq: example of qcharacter formula} only changes by setting $\chi_q(Y_{4,-2}) =1$. 
The $n=4$ (inhomogeneous) tableau formula is 
\begin{align*}
\ch(\begin{ytableau}
1 & 2  \\
2 & 3 \\
4 & 5 \\
6 & 7 
\end{ytableau}) & = P_{2356} - P_{2456} P_{3567} + P_{1235} P_{2456} P_{3467} - P_{1245} P_{2356} P_{3467} \\
& \qquad - P_{1235} P_{2346} P_{2456} P_{3457} + P_{1245} P_{2346} P_{2356} P_{3457} \in \bbc[\Gr(4,7,\sim)].
\end{align*}
\end{example}

\begin{example} \label{example:character of the tableau 135246}
Continuing the previous example, we claim that $\frac{1}{P_{234}P_{345}}\ch(T') \in \bbc[\Gr(3,6)]$, where $\ch(T')$ is the right hand side of \eqref{eq:chT'}. This is a consequence of the following Pl\"ucker relations: 
\begin{align}
P_{245}P_{356} &= P_{345}P_{256}+P_{235}P_{456} \label{eq:straightener3} \\
P_{124}P_{235} &= P_{234}P_{125}+P_{123}P_{245} \label{eq:straightener4} \\ 
P_{134}P_{235} &= P_{234}P_{135}+P_{345}P_{123} \label{eq:straightener5a}\\ 
P_{245}P_{346} &= P_{345}P_{246} +P_{234}P_{456}\label{eq:straightener5b} \\
P_{123}P_{345} &= P_{234}P_{135}-P_{134}P_{235} \label{eq:straightener35} \\
P_{234}P_{456} &= P_{345}P_{246}-P_{245}P_{346} \label{eq:straightener45}.
\end{align}
Applying \eqref{eq:straightener3} to the third term in $\ch(T')$, the first term on the right hand side is divisible by $P_{234}P_{345}$ and we consider the second term as a ``leftover term''. Likewise, applying \eqref{eq:straightener4} to the fourth term in $\ch(T')$, the first term on the right hand side is divisible and the second term is leftover. Applying both \eqref{eq:straightener5a} and \eqref{eq:straightener5b} to the fifth term in $\ch(T')$, two of the resulting terms are divisible and the other two terms are leftover. The first of these leftover terms cancels with the leftover term from \eqref{eq:straightener3} using \eqref{eq:straightener35}, and the second of these leftover terms cancels with the leftover term from \eqref{eq:straightener4} using \eqref{eq:straightener45}. The result is divisible by $P_{234}P_{345}$, and keeping track of the terms one gets   
\begin{equation}\label{eq:chTstdmonom}
\ch(T) = \frac{1}{P_{234}P_{345}}\ch(T') = 2P_{123}P_{456}+P_{124}P_{356}-P_{125}P_{346}-P_{134}P_{256}+P_{135}P_{246}.\end{equation}
This is the expression for $\ch(T)$ in terms of standard monomials. Note that the highest weight term is $P_T$, as expected.  
\end{example}

It would be interesting to generalize the calculations in Example~\ref{example:character of the tableau 135246}, obtaining the standard monomial expression for $\ch(T)$ from the one for $\ch(T')$. 

\subsection{Kazhdan-Lusztig Immanants}
We rephrase the formula \eqref{eq:Kazhdan-Lusztig character formula tableau formula} in the language of {\sl Kazhdan-Lusztig immanants} defined by Rhoades and Skandera \cite{RS}. As a corollary, we conclude that $\ch(T)$ is nonnegative on the totally nonnegative Grassmannian. 

\begin{definition}\label{defn:KLImmanant}
Let $A = (M_{i,j})_{i,j \in [m]} \in \GL_m$. For a permutation $v \in S_m$, the {\sl Kazhdan-Lusztig immanant} $\text{Imm}_v \in \CC[\GL_m]$ is the function  
\begin{equation}
A \mapsto \sum_{u \geq v}(-1)^{\ell(w)-\ell(v)}p_{w_0u,w_0v}(1)\prod_{i=1}^mA_{i,u(i)},
\end{equation}  
where $\geq$ denotes the (strong) Bruhat order on $S_m$. 
\end{definition}

Let ${\bf i} = 1 \leq i_1 \leq i_2 \leq \cdots \leq i_k \leq m$ and  ${\bf j} = 1 \leq j_1 \leq j_2 \leq \cdots \leq j_k \leq m$ be two weakly increasing sequences of indices, thought of as row and column indices, respectively. The {\sl generalized submatrix} of $A$ corresponding to ${\bf i,j}$
is the $k \times k$ matrix $A^{{\bf i,j}}$ whose $(a,b)$ entry is $A_{i_a,j_b}$ for $a,b \in [k]$. Du and Skandera \cites{Du,Ska} showed that the dual canonical basis for $\CC[\GL_m]$ are exactly the nonzero Kazhdan-Lusztig immanants of generalized submatrices, i.e. functions of the form  
$A \mapsto \text{Imm}_v(A^{{\bf i,j}})$ as ${\bf i,j}$ and $v \in S_k$ vary.

Consider the regular map 
\begin{equation}\label{eq:MS}
\text{MS} \colon \Gr(n,m) \to B \subset \GL_m \hspace{.5cm} \text{ sending } x \mapsto (P_{[i,i+n] \setminus \{j\}}(x))_{i,j \in [m]},  
\end{equation}
where $B \subset \GL_m$ is the subgroup of upper triangular matrices. As usual, we treat indices of Pl\"ucker coordinates modulo  
$m$ and treat $P_{[i,i+n] \setminus j}(x) = 0$ if $j \notin [i,i+n]$. The map MS is closely related to a map defined by Marsh and Scott (see Remark~\ref{rmk:MS}). 

\begin{example} For $x \in \Gr(3,5)$, one has MS$(x) = \begin{pmatrix}
P_{234} & P_{134} & P_{124} & P_{123} &  0\\
0 & P_{345} & P_{245} & P_{235} &  P_{234}\\
0 & 0 & P_{145} & P_{135} &  P_{134}\\
0 & 0 & 0 & P_{125} &  P_{124}\\
0 & 0 & 0 & 0 &  P_{123}\\
\end{pmatrix},$
where we everywhere abbreviate $P_{234}(x) = P_{234}$, etc. In general, $MS(x)$ has nonzero entries concentrated within $n+1$ diagonals, with frozen Pl\"ucker coordinates sitting on the outermost diagonals.  
\end{example}

A point $x \in \Gr(n,m)$ is {\sl totally nonnegative} (TNN) if all of its Pl\"ucker coordinates are real and nonnegative. A function $f \in \CC[\Gr(n,m)]$ or in $\CC[\Gr^\circ(n,m)]$ is a {\sl TNN function} if $f(x) \geq 0$ whenever $x$ is TNN. Likewise, a matrix $A \in \GL_m$ is {\sl TNN} if all of its minors are nonnegative, and $f \in \CC[\GL_m]$ is a {\sl TNN function} if $f(A) \geq 0$ whenever $A \in \GL_m$ is TNN.

Let $T \in {\rm SSYT}(n,[m])$ with small gaps tableau $T'$. These determine a triple $\mu,\lambda,w_T$ \eqref{eq:lammu}, and moreover weakly increasing sequences $\bf{i,j}$ with $i_a = 1-\mu_a, j_a = n-\lambda_a$ as above. 

\begin{proposition}\label{prop:KLandChT}
If $T'$ is a small gaps tableau, then $\ch(T') \in \CC[\Gr(n,m)]$ is the pullback ${\rm MS^*}(f)$ of a dual canonical basis element $f \in \CC[\GL_m]$. Explicitly, if $T'$ corresponds to ${\bf i,j} \in \ZZ^k$ and $w_T \in S_k$ as just defined, then $f$ is the Kazhdan-Lusztig immanant $A \mapsto \text{Imm}_{w_T^{-1}}(A^{{\bf i,j}})$ indexed by $w_T^{-1},{\bf i,j}$. In particular, $\ch(T) \in \bbc[\Gr^\circ(n,m)]$ is a TNN function, for any $T \in {\rm SSYT}(n,[m])$. 
\end{proposition}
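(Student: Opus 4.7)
The plan is to directly unfold both sides and match them term-by-term. First I would expand $\text{Imm}_{w_T^{-1}}(\text{MS}(x)^{\mathbf{i},\mathbf{j}})$ using Definition~\ref{defn:KLImmanant} together with the definition of MS in~\eqref{eq:MS}, obtaining
$$\text{Imm}_{w_T^{-1}}\bigl(\text{MS}(x)^{\mathbf{i},\mathbf{j}}\bigr) = \sum_{u \in S_k} (-1)^{\ell(u)-\ell(w_T)} \, p_{w_0 u,\, w_0 w_T^{-1}}(1) \prod_{a=1}^{k} P_{[i_a,i_a+n]\setminus\{j_{u(a)}\}}(x).$$
Reindexing the product by $b = u(a)$ rewrites $\prod_a P_{[i_a,i_a+n]\setminus\{j_{u(a)}\}}(x)$ as $P_{u^{-1};T'}(x)$ in the notation of Definition~\ref{defn:usemicolonT} (noting that vanishing of a factor happens precisely when the corresponding column set is ill-defined, matching the convention there).

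Next I would change the summation variable via $u \mapsto u^{-1}$ (using $\ell(u) = \ell(u^{-1})$) and apply the standard symmetry $p_{x,y}(q) = p_{x^{-1},y^{-1}}(q)$ of Kazhdan--Lusztig polynomials, rewriting the KL factor as $p_{uw_0,\, w_T w_0}(1)$. Since $w \mapsto (-1)^{\ell(w)}$ is the sign character of $S_k$, the sign $(-1)^{\ell(u)-\ell(w_T)}$ agrees with $(-1)^{\ell(uw_T)}$. Comparing with~\eqref{eq:Kazhdan-Lusztig character formula tableau formula} shows the resulting sum is exactly $\ch(T')(x)$. Thus $\ch(T') = \text{MS}^*\bigl(\text{Imm}_{w_T^{-1}}(A^{\mathbf{i},\mathbf{j}})\bigr)$, and since the leading term (from $u = w_T$) is $P_{T'} \neq 0$, the immanant is nonzero; by the Du--Skandera theorem it is a dual canonical basis element of $\CC[\GL_m]$.

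For the TNN claim I would combine three ingredients. First, the result of Rhoades--Skandera (building on Haiman and Stembridge) that every Kazhdan--Lusztig immanant is a TNN function on $\GL_m$. Second, the assertion that MS carries TNN points of $\Gr(n,m)$ to TNN upper-triangular matrices; this reduces to expressing each minor of $\text{MS}(x)$ as a subtraction-free polynomial in the Pl\"ucker coordinates of $x$, which I would verify by a Lindstr\"om--Gessel--Viennot argument on the natural planar network underlying $\text{MS}$ (alternatively, this follows from the properties of the Marsh--Scott map cited in Remark~\ref{rmk:MS}). Third, for arbitrary $T$, the definition $\ch(T) = P_{T''}\,\ch(T')$ multiplies $\ch(T')$ by a Laurent monomial in the \emph{trivial} frozen Pl\"ucker coordinates, which are strictly positive on the totally nonnegative part of $\Gr^\circ(n,m)$; hence TNN-ness propagates from $\ch(T')$ to $\ch(T)$.

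The main obstacle I anticipate is the verification that $\text{MS}$ sends TNN Grassmannian points to TNN matrices: the matrix $\text{MS}(x)$ has an enforced ``zero wedge'' below the anti-diagonal, so minors straddling this region need care, but those minors vanish for index-set reasons and the remaining minors admit a planar-network interpretation making Lindstr\"om--Gessel--Viennot applicable. Everything else is a bookkeeping exercise in Kazhdan--Lusztig combinatorics and the definitions already in place.
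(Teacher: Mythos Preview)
Your argument for the identity $\ch(T') = \text{MS}^*(\text{Imm}_{w_T^{-1}}(A^{\mathbf{i},\mathbf{j}}))$ is correct and is essentially the paper's own computation run in the opposite direction: the paper starts from the formula~\eqref{eq:Kazhdan-Lusztig character formula tableau formula} for $\ch(T')$ and massages it into the immanant, while you start from the immanant and massage it into~\eqref{eq:Kazhdan-Lusztig character formula tableau formula}; the ingredients (reindexing the product, the substitution $u\mapsto u^{-1}$, the symmetry $p_{x,y}=p_{x^{-1},y^{-1}}$, and the sign-character observation) are identical.

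For the TNN claim your outline is also correct, but the paper handles your ``main obstacle'' more efficiently. Rather than constructing a planar network and invoking Lindstr\"om--Gessel--Viennot, the paper applies the Fekete criterion for triangular matrices: it suffices to check that every \emph{row-solid} minor of $\text{MS}(x)$ is nonnegative, and each such minor turns out to be a \emph{monomial} in Pl\"ucker coordinates of $x$ (by a short induction, or by the explicit formulas in \cite[Lemma~5.2]{MS}), hence nonnegative when $x$ is TNN. This sidesteps the need to build a network or to reason separately about minors that straddle the zero band. Your appeal to Remark~\ref{rmk:MS} as an alternative is a bit loose, since the map there differs from $\text{MS}$ by row rescalings and the complementation $P_I\mapsto P_{[m]\setminus I}$, so the TNN statement does not transfer without further argument.
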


We view this as further evidence that $\{\ch(T)\}_{T \in {\rm SSYT}(n,[m])}$ is the dual canonical basis for $\CC[\Gr(n,m)]$. 

\begin{proof}
This follows from the definitions, but we spell it out to match conventions. We use the following well known facts: $p_{u,v}(t) = 0$ unless $u \leq v$ in Bruhat order, $p_{u,v} = p_{u^{-1},v^{-1}}$, and multiplication by $w_0$ is an anti-automorphism of Bruhat order. Then for $x \in \Gr(n,m)$, 
\begin{align}
\ch(T')(x) &= \sum_{u \in S_k}(-1)^{\ell(uw_T)}p_{uw_0,w_Tw_0}(1)P_{u \cdot T}(x)\\
&= \sum_{u \in S_k}(-1)^{\ell(uw_T)}p_{uw_0,w_Tw_0}(1) \prod_{s=1}^k ({\rm MS}(x)^{{\bf i,j}})_{u(s),s} \\
&= \sum_{u \in S_k}(-1)^{\ell(u^{-1}w_T)}p_{w_0u^{-1},w_0w_T^{-1}}(1) \prod_{s=1}^k ({\rm MS}(x)^{{\bf i,j}})_{s,u^{-1}(s)} \\
&= \text{Imm}_{w_T^{-1}}({\rm MS}(x)^{{\bf i,j}}),
 \end{align}
where passing from the first line to the second we used the definition of MS$(x)$, passing from the second to the third we used properties of the sign character on $S_k$ and the KL property involving inverses, and passing to the final line we replaced $u$ by $u^{-1}$ and used the facts about Bruhat order. This string of equalities says that 
the KL-immanant pulls back to $\ch(T')$ under the map $x \mapsto {\rm MS}(x)$.  

To see that $\ch(T)$ is a totally nonnegative function, it suffices to prove that $\ch(T')$ is a TNN function, since the two differ by a Laurent monomial in frozen Pl\"ucker coordinates. To show that $\ch(T')$ is, it suffices to prove that ${\rm MS}(x)^{{\bf i,j}}$ is totally nonnegative, because Kazhdan-Lusztig immanants are TNN functions \cite[Proposition 2]{RS} (alternatively, one can use the well known fact that dual canonical basis elements for $\GL_m$ are TNN functions). To show that ${\rm MS}(x)^{{\bf i,j}}$ is TNN, it suffices to show that ${\rm MS}(x)$ is. By the Fekete criterion, one can check that a triangular matrix $A$ is TNN by checking that each of its row-solid minors is nonnnegative. But each such row-solid minor of ${\rm MS}(x)$ is a monomial in Pl\"ucker coordinates of $x$, as follows by an easy induction, or by the explicit formulas \cite[Lemma 5.2]{MS}.
\end{proof}

The dual canonical basis elements appearing in Proposition~\ref{prop:KLandChT} are a proper subset of the dual canonical basis for $\CC[\GL_m]$, namely those whose generalized submatrices use only the first $m-n$ rows. We expect that {\sl every} dual canonical basis element $f$ has the property ${\rm MS}^*(f)$ is either zero, or is of the form $\ch(T)$ for some tableau $T$ (not necessarily with small gaps). On the other hand, Remark~\ref{rmk:notsurjective} shows that not every $\ch(T)$ is not pullback of a dual canonical basis element for $\CC[\GL_m]$. We believe that ``matching up'' the two bases requires localizing at frozen variables on both sides. 

\begin{remark}\label{rmk:MS}
Marsh and Scott \cite{MS} gave an isomorphism of the open positroid variety $\Gr^\circ(n,m)$ with a certain {\sl unipotent cell} $N^w \subset N \subset \GL_m$ where $w \in S_m$ is the Grassmann permutation $i \mapsto i+n \mod m$ and $N\subset \GL_m$ is the subgroup of unipotent upper triangular matrices. Our map MS is a variant of theirs: their version rescales each row by the reciprocal of a frozen variable while we do not, and moreover, theirs differs from ours by the complementation map $P_I \to P_{[m] \setminus I}$. Hernandez and Leclerc \cite[Section 13.7]{HL10} described a certain map $\mathcal{R}_\ell \twoheadrightarrow K_0(\mathcal{C}_{\ell}^{A_{n-1}})$, where 
$\mathcal{R}_\ell$ is an algebra built from Grothendieck classes of modules for affine Hecke algebras. Under this map, each simple object is either sent to 
a simple object or to zero. We expect that this map is the pullback ${\rm MS}^*$, restricted to the subalgebra $\CC[N] \subset \CC[B]$, followed by the quotient map $\CC[\Gr(n,m)] \to \CC[\Gr(n,m,\sim)]$.   
\end{remark}

\section{Fomin and Pylyavskyy's conjectures}\label{sec:Fomin Pylyavskyy conjecture}
We recall some aspects of Fomin and Pylyavskyy's conjecture about cluster combinatorics in Grassmannian cluster algebras \cite{FP}. We begin by reviewing the definitions of tensor diagrams, webs, and web invariants. 

\begin{definition}
Let $D$ be a disk with some number $m$ of marked points on its boundary. A {\sl tensor diagram} is a finite bipartite multigraph $T$, drawn in $D$, with a fixed bipartition of its vertex set into black and white color sets, subject to the following additional requirements:
\begin{itemize}
\item the marked points of $D$ ({\sl boundary vertices}) are in the vertex set of $T$ and are colored black,
\item the remaining {\sl interior vertices} are in the interior of $D$ and are trivalent.
\end{itemize}
The boundary vertices can have arbitrary valence (including zero). Tensor diagrams are allowed can be nonplanar, with at most two edges crossing transversely at any point.

A tensor diagram is a {\sl web} (or an $\mathfrak{sl}_3$-web) if it is planar (i.e., if there are no such crossings). A web is {\sl non-elliptic} it has no multiple edges, and if any face formed by interior vertices has at least six sides. 
\end{definition}
Every tensor diagram $T$ with $m$ boundary vertices defines an invariant $[T] \in \CC[\Gr(3, m)]$ by an explicit formula \cite[Eq. (4.1)]{FP}. The $\ZZ^m$-degree of $[T]$ is the vector recording the valence of the boundary vertices. A {\sl web invariant} is an element $[W] \in \CC[\Gr(3, m)]$ for a non-elliptic web $W$. 

One can calculate with tensor diagrams $[T]$ implicitly via diagrammatic relations known as {\sl skein relations}. One considers the 
vector space of $\CC$-linear formal combinations of tensor diagrams with $m$ boundary vertices (considered up to isotopy fixing the boundary vertices), with multiplication given by superposition of diagrams. Then $\CC[\Gr(3, m)]$ is the quotient of this algebra by the diagrammatic relations in Figure \ref{fig:skein relations}. We illustrate these relations in many examples below. In this language, Pl\"ucker coordinates correspond to ``tripod'' diagrams.  
\begin{figure}
\begin{center}
\begin{tabular}{c}
\begin{tikzpicture}[scale = .6]
\coordinate (A) at (0,1);
\coordinate (B) at (2,1);
\coordinate (C) at (2,0);
\coordinate (D) at (0,0);
\draw (A)--(C);
\draw (B)--(D);

\draw [fill= white] (A) circle [radius = .065];
\draw [fill= white] (D) circle [radius = .065];
\draw [fill= black] (B) circle [radius = .065];
\draw [fill= black] (C) circle [radius = .065];
\node at (3,.5) {$=$};
\node at (7.2,.5) {$+$};

\begin{scope}[xshift = 4cm]
\coordinate (AA) at (0,1);
\coordinate (BB) at (2,1);
\coordinate (CC) at (2,0);
\coordinate (DD) at (0,0);
\draw (AA)--(BB);
\draw (CC)--(DD);

\draw [fill= white] (AA) circle [radius = .065];
\draw [fill= white] (DD) circle [radius = .065];
\draw [fill= black] (BB) circle [radius = .065];
\draw [fill= black] (CC) circle [radius = .065];
\end{scope}

\begin{scope}[xshift = 8cm]
\coordinate (AAA) at (0,1);
\coordinate (BBB) at (2,1);
\coordinate (CCC) at (2,0);
\coordinate (DDD) at (0,0);
\coordinate (E) at (.67,.5);
\coordinate (F) at (1.33,.5);
\draw (AAA)--(E)--(F)--(BBB);
\draw (CCC)--(F);
\draw (DDD)--(E);
\draw [fill= white] (AAA) circle [radius = .065];
\draw [fill= white] (DDD) circle [radius = .065];
\draw [fill= black] (BBB) circle [radius = .065];
\draw [fill= black] (CCC) circle [radius = .065];
\draw [fill= black] (E) circle [radius = .065];
\draw [fill= white] (F) circle [radius = .065];
\node at (2.5,0) {\hfill};

\end{scope}
\end{tikzpicture} \\
 \vspace{.05cm} \\ 
 \begin{tikzpicture}[scale = .7]
\coordinate (A) at (.2,.8);
\coordinate (B) at (1.8,.8);
\coordinate (C) at (1.8,.2);
\coordinate (D) at (.2,.2);
\draw (A)--(B)--(C)--(D)--(A);
\draw (A)--(0,1);
\draw (B)--(2,1);
\draw (C)--(2,0);
\draw (D)--(0,0);

\draw [fill= white] (A) circle [radius = .065];
\draw [fill= white] (C) circle [radius = .065];
\draw [fill= black] (B) circle [radius = .065];
\draw [fill= black] (D) circle [radius = .065];
\node at (3,.5) {$=$};
\node at (7.2,.5) {$+$};

\begin{scope}[xshift = 4cm]
\draw [rounded corners] (0,1)--(1,.8)--(2,1);
\draw [rounded corners] (0,0)--(1,.2)--(2,0);
\end{scope}

\begin{scope}[xshift = 8cm]
\draw [rounded corners] (0,1)--(.2,.5)--(0,0);
\draw [rounded corners] (2,1)--(1.8,.5)--(2,0);
\end{scope}
\end{tikzpicture} \\
\vspace{.05cm} \\ 
\begin{tikzpicture}[scale = .7]
\draw  (0,-2) circle [radius = .65];
\draw (-1.25,-2)--(-.65,-2);
\draw (1.25,-2)--(.65,-2);
\draw [fill= white] (-.65,-2) circle [radius = .07];
\draw [fill= black] (.65,-2) circle [radius = .07];
\node at (3,-2) {$= \hspace{.1cm} -2 \hspace{.2cm} \times$}; 
\draw (4.5,-2)--(7.5,-2);
\node at (9.5,-2) {\hfill};
\end{tikzpicture} \\
\vspace{.025cm} \\ 
\begin{tikzpicture}[scale = .7]
\draw (.5,0) circle [radius = .6];
\node at (-1.4,0) {\hfill};
\node at (3,0) {$= \hspace{.1cm} 3$}; 
\node at (11,0) {\hfill};
\end{tikzpicture} \\
\begin{tikzpicture}[scale = .7]
\draw [gray] (0,0) arc [radius=2, start angle=-90, end angle= -60];
\draw [gray] (0,0) arc [radius=2, start angle=-90, end angle= -120];
\draw [rounded corners] (0,0)--(-.4,.5)--(0,1);
\draw [rounded corners] (0,0)--(.4,.5)--(0,1);
\draw (0,1)--(0,1.35);
\draw [fill= black] (0,0) circle [radius = .065];
\draw [fill= white] (0,1) circle [radius = .065];
\node at (2.5,1) {$= \hspace{.1cm} 0$};
\node at (9.4,0) {\hfill};
\end{tikzpicture}
\end{tabular}
\end{center}
\caption{Skein relations for $\mathfrak{sl}_3$ webs. \label{fig:skein relations}}
\end{figure}
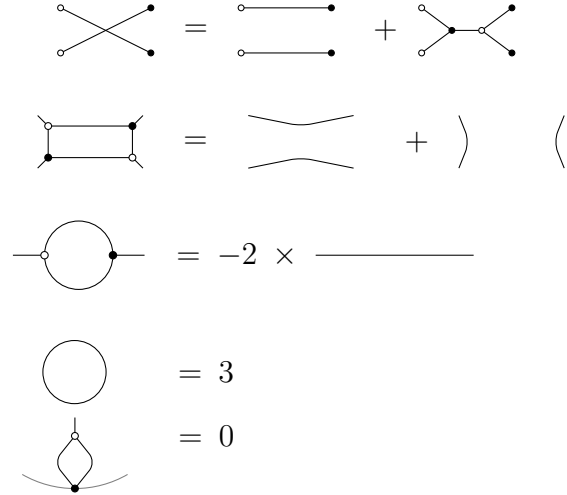

Kuperberg \cite{Kup} showed that web invariants are a basis for $\CC[\Gr(3, m)]$, and with Khovanov \cite{KK} gave a bijective labeling $T \mapsto [W(T)]$ of these webs by tableaux $T \in {\rm SSYT}(3,[m])$. For an easily computable description of this bijection we refer to \cite{Tym}. One useful property of the bijection is the following: if $T$ has an $i$ weakly northeast of an $i+1$, then  boundary vertices $i,i+1$ are joined by a ``fork'' or ``Y'' in $W(T)$ (see Example~\ref{eg:124356web} for an illustration of the meaning of ``fork''). This mnemonic allows for quick computation of $W(T)$ in examples. 

Fomin and Pylyavskyy \cite[Conjecture 9.3]{FP} conjectured that every cluster monomial in $\CC[\Gr(3, m)]$ is a web invariant. This is known in the finite mutation type cases $m \leq 9$ \cite{Fra}. We state the following more specific version.

\begin{conjecture}\label{conj:web invariant equals to ch(T)}
For $T \in {\rm SSYT}(3,[m])$, if $\ch(T) \in \CC[\Gr(3,m)]$ is a cluster monomial, then $\ch(T) = [W(T)]$. 
\end{conjecture}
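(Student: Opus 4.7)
The plan is to identify both $\ch(T)$ and $[W(T)]$ as the unique element of a natural basis of $\CC[\Gr(3,m)]$ whose standard monomial expansion has $P_T$ as the unique highest-weight term with coefficient one, thereby reducing the conjecture to a leading-term comparison combined with the Fomin-Pylyavskyy conjecture (which is known in the cases $m \leq 9$ and is implicitly being invoked by the hypothesis).

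First, I would verify the base case: for a single-column tableau $T$, we have $\ch(T) = P_T$ by definition, and the Khovanov-Kuperberg bijection sends such $T$ to a tripod web whose associated invariant is the same Pl\"ucker coordinate $P_T$. Second, I would show that both families are upper-triangular with respect to the standard monomial basis in the weight order of Definition~\ref{def:weight of product of Plucker coordinaes}, with leading term $P_T$ of coefficient one. For $\ch(T)$, this is exactly Proposition~\ref{prop: characher formula in terms of semistandard tableaux}. For $[W(T)]$, this leading-term property is essentially how the Khovanov-Kuperberg bijection is constructed: resolving the tensor diagram of $W(T)$ via the skein relations in Figure~\ref{fig:skein relations} yields $P_T$ from the ``growth state'' $T$, while every other resolution corresponds to a standard monomial of strictly lower $\wt$.

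Third, under the hypothesis that $\ch(T)$ is a cluster monomial, I would apply Fomin-Pylyavskyy's conjecture (Conjecture~9.3 of \cite{FP}) to write $\ch(T) = [W(T')]$ for some $T' \in \SSYT(3, [m])$. Comparing the unique highest-weight terms in both standard monomial expansions and using linear independence of the $\{P_S\}$ forces $P_T = P_{T'}$, hence $T = T'$, which gives $\ch(T) = [W(T)]$. In the cases $m\leq 9$ handled by \cite{Fra}, this would complete an unconditional proof.

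The hard part will be making the leading-term property for $[W(T)]$ precise with respect to exactly the weight order used throughout the paper; while folklore identifies $T$ as the ``top term'' of $[W(T)]$, carefully matching the growth-rule ordering of Khovanov-Kuperberg with the $P_{\mathfrak{g}}$-valued weight of Definition~\ref{def:weight of product of Plucker coordinaes} (which mixes the fundamental weights $\omega_1,\omega_2$ nontrivially) requires bookkeeping through the recursion. A more serious obstacle is that the strategy remains conditional on Fomin-Pylyavskyy in general. An unconditional proof would need to establish that mutation of web invariants intertwines with the tableau-monoid mutation rule of Section~\ref{sec:mutations description}: starting from Pl\"ucker coordinates (where $\ch(T) = [W(T)] = P_T$) one would propagate the identification along the exchange graph by checking that the skein-relation expansion of $[W(T_k)]\cdot [W(T'_k)]$ produces the same two monomial terms (with the higher-weight factor $\cup$-multiplying the other) as the tableau rule \eqref{eq:howtomutate} predicts. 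Verifying this compatibility appears to require new combinatorial input about the behavior of non-elliptic webs under square moves, and is where I expect the genuine difficulty of the conjecture to lie.
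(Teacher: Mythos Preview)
The statement you are attempting to prove is labeled a \emph{Conjecture} in the paper and is explicitly left open: the paper offers no proof, only the remark that ``a potential approach \ldots is to show that $[W(T)]$ satisfies the explicit formula for $\ch(T)$,'' together with the warning that $\ch(T)\neq[W(T)]$ can fail for non-real tableaux (Example~\ref{example:non-real tableaux}), so that ``proving Conjecture~\ref{conj:web invariant equals to ch(T)} requires some understanding of exchange relations.'' There is therefore no paper proof to compare your proposal against.

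Your proposal is not a proof either, and you correctly identify this. What you have written is a reduction: assuming the Fomin--Pylyavskyy conjecture \cite[Conjecture~9.3]{FP} and a unitriangularity statement for the web basis in the weight order of Definition~\ref{def:weight of product of Plucker coordinaes}, the conjecture follows by matching leading terms. This reduction is sound as logic, and for $m\le 9$ (where FP is known \cite{Fra}) it is a genuine argument, modulo the web leading-term claim. However, that claim is precisely the delicate point you flag: the Khovanov--Kuperberg bijection is built via growth rules, and while folklore says the transition to standard monomials is unitriangular with $P_T$ on the diagonal, the literature (e.g.\ \cite{KK}) establishes unitriangularity relative to a different partial order than $\wt$. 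You would need to show that the KK order refines (or is refined by) the dominance order of Definition~\ref{defn:dominanceontableaux}, after which Lemma~\ref{lem:dominanceimpliesweight} handles the passage to $\wt$. This is not automatic.

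Your final paragraph, proposing to propagate $\ch(T)=[W(T)]$ along the exchange graph by checking compatibility of skein relations with the tableau mutation rule \eqref{eq:howtomutate}, is essentially the approach the paper itself gestures at (``understanding of exchange relations''). You are right that this is where the real content lies; neither you nor the paper supplies it.
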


\begin{remark}
If this conjecture, as well as further conjectures of Fomin-Pylavskyy, were proved, then one would obtain a direct pictorial procedure for testing reality and primeness of $U_q(\widehat{\mathfrak{sl}_n})$-modules. Given a simple module $L(M)$, one would draw the corresponding web $W(T_M)$. Using another explicit pictorial procedure  known as the {\sl arborization algorithm} \cite[Section 10]{FP}, one conjecturally should be able to test whether $W(T_M)$ is a cluster monomial and moreover find its factorization into cluster variables (i.e., to test whether $L(M)$ is real and find its tensor factorization into prime modules). 
\end{remark}

A potential approach to proving Conjecture~\ref{conj:web invariant equals to ch(T)} is to show that $[W(T)]$ satisfies the explicit formula for $\ch(T)$. We warn however Example~\ref{example:non-real tableaux} gives an example of a tableau $T$ for which $\ch(T) \neq [W(T)]$. Thus, we imagine that proving Conjecture~\ref{conj:web invariant equals to ch(T)} requires some understanding of exchange relations. 

Replacing the trivalence condition by a similar $n$-valence condition, one can define $\mathfrak{sl}_n$-web invariants for $n \geq 4$. A complete set of skein relations is known \cite{CKM}. We imagine that any cluster monomial $\ch(T) \in \CC[\Gr(n,m)]$ is an $\mathfrak{sl}_n$-web invariant even when $n \geq 4$. Note however that there is no appropriate analogue of web basis or Khovanov-Kuperberg bijection when $n >3$, and webs are merely a distinguished spanning set.

\begin{example}\label{eg:124356web}
Consider the tableau $T$ with columns $[1,2,4],[3,5,6]$ from Example~\ref{eg:124356}. We computed that $\ch(T) = P_{1,2,4}P_{3,5,6}-P_{1,2,3}P_{4,5,6}$. On the other hand, applying the Khovanov-Kuperberg bijection to $T$ yields the web 
$\begin{tikzpicture}[scale=0.3]
\draw[thick] (0,0) circle (2 cm);
  \newdimen\R
   \R=2cm
   \foreach \x/\l/\p/\q in
     {120- 60/{1}/above/v1,
      120-120/{2}/right/v2,
      120-180/{3}/below/v3,
      120-240/{4}/below/v4,
      120-300/{5}/left/v5,
      120-360/{6}/above/v6
     }
     \node[inner sep=1pt,circle,draw,fill,label={\p:\l}] at (\x:\R) (\q) {};
     \node[inner sep=1pt,circle,draw,fill,label={above:6}] at (120-360:\R) (v6) {};
     \node[inner sep=1pt,circle,draw] at (-0.5,0.5) (p1) {};   
     \node[inner sep=1pt,circle,draw] at (0.5,0) (p2) {};
     \node[inner sep=1pt,circle,draw,fill] at (0,-1) (p5) {};
     \node[inner sep=1pt,circle,draw] at (0,-1.5) (p6) {};
     \draw (v1)--(p2);
     \draw (v3)--(p6);
     \draw (v5)--(p1);
     \draw (v2)--(p2);
     \draw (p1)--(p5);
     \draw (p2)--(p5);
     \draw (p6)--(p5);
     \draw (v4)--(p6);
     \draw (v6)--(p1);
\end{tikzpicture}$.
Note this web has a ``fork'' between vertices 1 and 2 (and also between 3 and 4, and 5 and 6) as in the mnemonic above. To see that $[W(T)]$ agrees with $\ch(T)$, 
see the second equation in Figure~\ref{fig:equations of web invariants}, which is an instance of the first skein relation in Figure~\ref{fig:skein relations}. 
\end{example}

\begin{example}\label{example:chT is equal to web invariant}
Consider the tableau $T$ with columns $[1,3,5],[2,4,5]$. Example \ref{example:character of the tableau 135246} gave its expansion into standard monomials:
\begin{align*}
\ch(\begin{ytableau}
1 & 2  \\
3 & 4 \\
5 & 6 
\end{ytableau}) & = P_{1 3 5} P_{2 4 6} - P_{1 2 5} P_{3 4 6}- P_{1 3 4} P_{2 5 6}+ P_{1 2 4} P_{3 5 6}- 2  P_{1 2 3} P_{4 5 6}.
\end{align*}
The web for this tableau is $W_T = 
\begin{tikzpicture}[scale=0.3]
\draw[thick] (0,0) circle (2 cm);
  \newdimen\R
   \R=2cm
   \foreach \x/\l/\p/\q in
     {120- 60/{1}/above/v1,
      120-120/{2}/right/v2,
      120-180/{3}/below/v3,
      120-240/{4}/below/v4,
      120-300/{5}/left/v5,
      120-360/{6}/above/v6
     }
     \node[inner sep=1pt,circle,draw,fill,label={\p:\l}] at (\x:\R) (\q) {};
     \node[inner sep=1pt,circle,draw,fill,label={above:6}] at (120-360:\R) (v6) {};
     \node[inner sep=1pt,circle,draw] at (-0.5,0.5) (p1) {};   
     \node[inner sep=1pt,circle,draw] at (0.5,0) (p2) {};
     \node[inner sep=1pt,circle,draw] at (0,1.5) (p3) {};
     \node[inner sep=1pt,circle,draw,fill] at (0,1) (p4) {};
     \draw (v1)--(p3);
     \draw (v6)--(p3);
     \draw (p4)--(p3);
     \draw (p4)--(p1);
     \draw (p4)--(p2);
     \draw (v5)--(p1); 
     \draw (v4)--(p1); 
     \draw (v2)--(p2); 
     \draw (v3)--(p2);
\end{tikzpicture}.$ 
We will show that $\ch(T) = [W(T)]$ using skein relations, by converting each term in the standard monomial to the web basis.  
By the equations in Figure \ref{fig:equations of web invariants}, we can express $P_{135}P_{246}$ in the web basis as 
\begin{align*}
P_{135}P_{246} = P_{123}P_{456} + [W'] + [W_T] + P_{156}P_{234} + P_{126}P_{345},
\end{align*}
where $W' = P_{124}P_{356} -P_{123}P_{456}$ is the web from the Example~\ref{eg:124356web}. We likewise express $P_{125}P_{346} = [W']+P_{126}P_{345}$, express $P_{134}P_{256} = [W']+P_{156}P_{234}$, and we note that $P_{123}P_{456}$ is already a web. 
Then 
\begin{align*}
\ch(T) = &P_{135}P_{246}-P_{125}P_{346}-P_{134}P_{256}+P_{124}P_{356}-2P_{123}P_{456} \\
= & (P_{123}P_{456} + [W'] + [W_T] + P_{156}P_{234} + P_{126}P_{345})-([W']+P_{126}P_{345}) \\
 &-([W']+P_{156}P_{124})+([W']+P_{123}P_{456})-2P_{123}P_{456} \\
= &[W_T].
\end{align*}

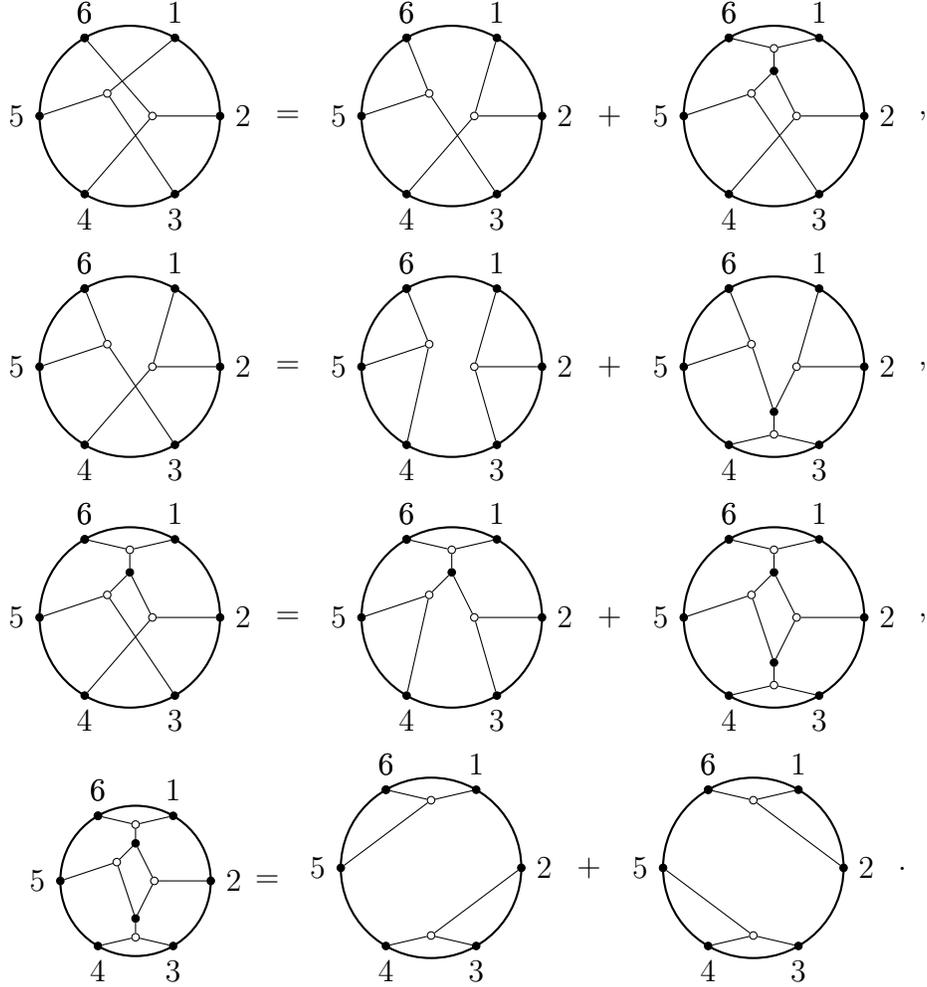
\begin{figure}[ht]
\begin{tikzpicture}[scale=0.6]
\draw[thick] (0,0) circle (2 cm);
  \newdimen\R
   \R=2cm
   \foreach \x/\l/\p/\q in
     {120- 60/{1}/above/v1,
      120-120/{2}/right/v2,
      120-180/{3}/below/v3,
      120-240/{4}/below/v4,
      120-300/{5}/left/v5,
      120-360/{6}/above/v6
     }
     \node[inner sep=1pt,circle,draw,fill,label={\p:\l}] at (\x:\R) (\q) {};
     \node[inner sep=1pt,circle,draw,fill,label={above:6}] at (120-360:\R) (v6) {};
     \node[inner sep=1pt,circle,draw] at (-0.5,0.5) (p1) {};   
     \node[inner sep=1pt,circle,draw] at (0.5,0) (p2) {};
     \draw (v1)--(p1);
     \draw (v3)--(p1);
     \draw (v5)--(p1);
     \draw (v2)--(p2);
     \draw (v4)--(p2);
     \draw (v6)--(p2);
     \node at (3.5,0) (p1) {$=$};   
\end{tikzpicture} 
\begin{tikzpicture}[scale=0.6]
\draw[thick] (0,0) circle (2 cm);
  \newdimen\R
   \R=2cm
   \foreach \x/\l/\p/\q in
     {120- 60/{1}/above/v1,
      120-120/{2}/right/v2,
      120-180/{3}/below/v3,
      120-240/{4}/below/v4,
      120-300/{5}/left/v5,
      120-360/{6}/above/v6
     }
     \node[inner sep=1pt,circle,draw,fill,label={\p:\l}] at (\x:\R) (\q) {};
     \node[inner sep=1pt,circle,draw,fill,label={above:6}] at (120-360:\R) (v6) {};
     \node[inner sep=1pt,circle,draw] at (-0.5,0.5) (p1) {};   
     \node[inner sep=1pt,circle,draw] at (0.5,0) (p2) {};
     \draw (v1)--(p2);
     \draw (v3)--(p1);
     \draw (v5)--(p1);
     \draw (v2)--(p2);
     \draw (v4)--(p2);
     \draw (v6)--(p1);
     \node at (3.5,0) (p1) {$+$};   
\end{tikzpicture} 
\begin{tikzpicture}[scale=0.6]
\draw[thick] (0,0) circle (2 cm);
  \newdimen\R
   \R=2cm
   \foreach \x/\l/\p/\q in
     {120- 60/{1}/above/v1,
      120-120/{2}/right/v2,
      120-180/{3}/below/v3,
      120-240/{4}/below/v4,
      120-300/{5}/left/v5,
      120-360/{6}/above/v6
     }
     \node[inner sep=1pt,circle,draw,fill,label={\p:\l}] at (\x:\R) (\q) {};
     \node[inner sep=1pt,circle,draw,fill,label={above:6}] at (120-360:\R) (v6) {};
     \node[inner sep=1pt,circle,draw] at (-0.5,0.5) (p1) {};   
     \node[inner sep=1pt,circle,draw] at (0.5,0) (p2) {};
     \node[inner sep=1pt,circle,draw] at (0,1.5) (p3) {};
     \node[inner sep=1pt,circle,draw,fill] at (0,1) (p4) {};
     \draw (v1)--(p3);
     \draw (v6)--(p3);
     \draw (p4)--(p3);
     \draw (p4)--(p1);
     \draw (p4)--(p2);
     \draw (v5)--(p1); 
     \draw (v3)--(p1); 
     \draw (v2)--(p2); 
     \draw (v4)--(p2);
     \node at (3.3,0) (p1) {$,$}; 
\end{tikzpicture} 

\begin{tikzpicture}[scale=0.6]
\draw[thick] (0,0) circle (2 cm);
  \newdimen\R
   \R=2cm
   \foreach \x/\l/\p/\q in
     {120- 60/{1}/above/v1,
      120-120/{2}/right/v2,
      120-180/{3}/below/v3,
      120-240/{4}/below/v4,
      120-300/{5}/left/v5,
      120-360/{6}/above/v6
     }
     \node[inner sep=1pt,circle,draw,fill,label={\p:\l}] at (\x:\R) (\q) {};
     \node[inner sep=1pt,circle,draw,fill,label={above:6}] at (120-360:\R) (v6) {};
     \node[inner sep=1pt,circle,draw] at (-0.5,0.5) (p1) {};   
     \node[inner sep=1pt,circle,draw] at (0.5,0) (p2) {};
     \draw (v1)--(p2);
     \draw (v3)--(p1);
     \draw (v5)--(p1);
     \draw (v2)--(p2);
     \draw (v4)--(p2);
     \draw (v6)--(p1);
     \node at (3.5,0) (p1) {$=$};   
\end{tikzpicture} 
\begin{tikzpicture}[scale=0.6]
\draw[thick] (0,0) circle (2 cm);
  \newdimen\R
   \R=2cm
   \foreach \x/\l/\p/\q in
     {120- 60/{1}/above/v1,
      120-120/{2}/right/v2,
      120-180/{3}/below/v3,
      120-240/{4}/below/v4,
      120-300/{5}/left/v5,
      120-360/{6}/above/v6
     }
     \node[inner sep=1pt,circle,draw,fill,label={\p:\l}] at (\x:\R) (\q) {};
     \node[inner sep=1pt,circle,draw,fill,label={above:6}] at (120-360:\R) (v6) {};
     \node[inner sep=1pt,circle,draw] at (-0.5,0.5) (p1) {};   
     \node[inner sep=1pt,circle,draw] at (0.5,0) (p2) {};
     \draw (v1)--(p2);
     \draw (v3)--(p2);
     \draw (v5)--(p1);
     \draw (v2)--(p2);
     \draw (v4)--(p1);
     \draw (v6)--(p1);
     \node at (3.5,0) (p1) {$+$};   
\end{tikzpicture} 
\begin{tikzpicture}[scale=0.6]
\draw[thick] (0,0) circle (2 cm);
  \newdimen\R
   \R=2cm
   \foreach \x/\l/\p/\q in
     {120- 60/{1}/above/v1,
      120-120/{2}/right/v2,
      120-180/{3}/below/v3,
      120-240/{4}/below/v4,
      120-300/{5}/left/v5,
      120-360/{6}/above/v6
     }
     \node[inner sep=1pt,circle,draw,fill,label={\p:\l}] at (\x:\R) (\q) {};
     \node[inner sep=1pt,circle,draw,fill,label={above:6}] at (120-360:\R) (v6) {};
     \node[inner sep=1pt,circle,draw] at (-0.5,0.5) (p1) {};   
     \node[inner sep=1pt,circle,draw] at (0.5,0) (p2) {};
     \node[inner sep=1pt,circle,draw,fill] at (0,-1) (p5) {};
     \node[inner sep=1pt,circle,draw] at (0,-1.5) (p6) {};
     \draw (v1)--(p2);
     \draw (v3)--(p6);
     \draw (v5)--(p1);
     \draw (v2)--(p2);
     \draw (p1)--(p5);
     \draw (p2)--(p5);
     \draw (p6)--(p5);
     \draw (v4)--(p6);
     \draw (v6)--(p1);
     \node at (3.3,0) (p1) {$,$}; 
\end{tikzpicture}


\begin{tikzpicture}[scale=0.6]
\draw[thick] (0,0) circle (2 cm);
  \newdimen\R
   \R=2cm
   \foreach \x/\l/\p/\q in
     {120- 60/{1}/above/v1,
      120-120/{2}/right/v2,
      120-180/{3}/below/v3,
      120-240/{4}/below/v4,
      120-300/{5}/left/v5,
      120-360/{6}/above/v6
     }
     \node[inner sep=1pt,circle,draw,fill,label={\p:\l}] at (\x:\R) (\q) {};
     \node[inner sep=1pt,circle,draw,fill,label={above:6}] at (120-360:\R) (v6) {};
     \node[inner sep=1pt,circle,draw] at (-0.5,0.5) (p1) {};   
     \node[inner sep=1pt,circle,draw] at (0.5,0) (p2) {};
     \node[inner sep=1pt,circle,draw] at (0,1.5) (p3) {};
     \node[inner sep=1pt,circle,draw,fill] at (0,1) (p4) {};
     \draw (v1)--(p3);
     \draw (v6)--(p3);
     \draw (p4)--(p3);
     \draw (p4)--(p1);
     \draw (p4)--(p2);
     \draw (v5)--(p1); 
     \draw (v3)--(p1); 
     \draw (v2)--(p2); 
     \draw (v4)--(p2);
     \node at (3.5,0) (p1) {$=$};   
\end{tikzpicture} 
\begin{tikzpicture}[scale=0.6]
\draw[thick] (0,0) circle (2 cm);
  \newdimen\R
   \R=2cm
   \foreach \x/\l/\p/\q in
     {120- 60/{1}/above/v1,
      120-120/{2}/right/v2,
      120-180/{3}/below/v3,
      120-240/{4}/below/v4,
      120-300/{5}/left/v5,
      120-360/{6}/above/v6
     }
     \node[inner sep=1pt,circle,draw,fill,label={\p:\l}] at (\x:\R) (\q) {};
     \node[inner sep=1pt,circle,draw,fill,label={above:6}] at (120-360:\R) (v6) {};
     \node[inner sep=1pt,circle,draw] at (-0.5,0.5) (p1) {};   
     \node[inner sep=1pt,circle,draw] at (0.5,0) (p2) {};
     \node[inner sep=1pt,circle,draw] at (0,1.5) (p3) {};
     \node[inner sep=1pt,circle,draw,fill] at (0,1) (p4) {};
     \draw (v1)--(p3);
     \draw (v6)--(p3);
     \draw (p4)--(p3);
     \draw (p4)--(p1);
     \draw (p4)--(p2);
     \draw (v5)--(p1); 
     \draw (v4)--(p1); 
     \draw (v2)--(p2); 
     \draw (v3)--(p2);
     \node at (3.5,0) (p1) {$+$};   
\end{tikzpicture} 
\begin{tikzpicture}[scale=0.6]
\draw[thick] (0,0) circle (2 cm);
  \newdimen\R
   \R=2cm
   \foreach \x/\l/\p/\q in
     {120- 60/{1}/above/v1,
      120-120/{2}/right/v2,
      120-180/{3}/below/v3,
      120-240/{4}/below/v4,
      120-300/{5}/left/v5,
      120-360/{6}/above/v6
     }
     \node[inner sep=1pt,circle,draw,fill,label={\p:\l}] at (\x:\R) (\q) {};
     \node[inner sep=1pt,circle,draw,fill,label={above:6}] at (120-360:\R) (v6) {};
     \node[inner sep=1pt,circle,draw] at (-0.5,0.5) (p1) {};   
     \node[inner sep=1pt,circle,draw] at (0.5,0) (p2) {};
     \node[inner sep=1pt,circle,draw] at (0,1.5) (p3) {};
     \node[inner sep=1pt,circle,draw,fill] at (0,1) (p4) {};
     \node[inner sep=1pt,circle,draw,fill] at (0,-1) (p5) {};
     \node[inner sep=1pt,circle,draw] at (0,-1.5) (p6) {};
     \draw (v1)--(p3);
     \draw (v6)--(p3);
     \draw (p4)--(p3);
     \draw (p4)--(p1);
     \draw (p4)--(p2);
     \draw (v5)--(p1); 
     \draw (v3)--(p6); 
     \draw (v2)--(p2); 
     \draw (v4)--(p6); 
     \draw (p1)--(p5);
     \draw (p2)--(p5);
     \draw (p5)--(p6);
     \node at (3.3,0) (p1) {$,$}; 
\end{tikzpicture}


\begin{tikzpicture}[scale=0.5]
\draw[thick] (0,0) circle (2 cm);
  \newdimen\R
   \R=2cm
   \foreach \x/\l/\p/\q in
     {120- 60/{1}/above/v1,
      120-120/{2}/right/v2,
      120-180/{3}/below/v3,
      120-240/{4}/below/v4,
      120-300/{5}/left/v5,
      120-360/{6}/above/v6
     }
     \node[inner sep=1pt,circle,draw,fill,label={\p:\l}] at (\x:\R) (\q) {};
     \node[inner sep=1pt,circle,draw,fill,label={above:6}] at (120-360:\R) (v6) {};
     \node[inner sep=1pt,circle,draw] at (-0.5,0.5) (p1) {};   
     \node[inner sep=1pt,circle,draw] at (0.5,0) (p2) {};
     \node[inner sep=1pt,circle,draw] at (0,1.5) (p3) {};
     \node[inner sep=1pt,circle,draw,fill] at (0,1) (p4) {};
     \node[inner sep=1pt,circle,draw,fill] at (0,-1) (p5) {};
     \node[inner sep=1pt,circle,draw] at (0,-1.5) (p6) {};
     \draw (v1)--(p3);
     \draw (v6)--(p3);
     \draw (p4)--(p3);
     \draw (p4)--(p1);
     \draw (p4)--(p2);
     \draw (v5)--(p1); 
     \draw (v3)--(p6); 
     \draw (v2)--(p2); 
     \draw (v4)--(p6); 
     \draw (p1)--(p5);
     \draw (p2)--(p5);
     \draw (p5)--(p6);
     \node at (3.5,0) (p1) {$=$};   
\end{tikzpicture} 
\begin{tikzpicture}[scale=0.6]
\draw[thick] (0,0) circle (2 cm);
  \newdimen\R
   \R=2cm
   \foreach \x/\l/\p/\q in
     {120- 60/{1}/above/v1,
      120-120/{2}/right/v2,
      120-180/{3}/below/v3,
      120-240/{4}/below/v4,
      120-300/{5}/left/v5,
      120-360/{6}/above/v6
     }
     \node[inner sep=1pt,circle,draw,fill,label={\p:\l}] at (\x:\R) (\q) {};
     \node[inner sep=1pt,circle,draw,fill,label={above:6}] at (120-360:\R) (v6) {};
     \node[inner sep=1pt,circle,draw] at (0,1.5) (p3) {};
     \node[inner sep=1pt,circle,draw] at (0,-1.5) (p6) {};
     \draw (v1)--(p3);
     \draw (v6)--(p3);
     \draw (v5)--(p3);
     \draw (v2)--(p6);
     \draw (v3)--(p6);
     \draw (v4)--(p6);
     \node at (3.5,0) (p1) {$+$};   
\end{tikzpicture} 
\begin{tikzpicture}[scale=0.6]
\draw[thick] (0,0) circle (2 cm);
  \newdimen\R
   \R=2cm
   \foreach \x/\l/\p/\q in
     {120- 60/{1}/above/v1,
      120-120/{2}/right/v2,
      120-180/{3}/below/v3,
      120-240/{4}/below/v4,
      120-300/{5}/left/v5,
      120-360/{6}/above/v6
     }
     \node[inner sep=1pt,circle,draw,fill,label={\p:\l}] at (\x:\R) (\q) {};
     \node[inner sep=1pt,circle,draw,fill,label={above:6}] at (120-360:\R) (v6) {};
     \node[inner sep=1pt,circle,draw] at (0,1.5) (p3) {};
     \node[inner sep=1pt,circle,draw] at (0,-1.5) (p6) {};
     \draw (v1)--(p3);
     \draw (v6)--(p3);
     \draw (v2)--(p3);
     \draw (v5)--(p6);
     \draw (v3)--(p6);
     \draw (v4)--(p6);   
     \node at (3.3,0) (p1) {$.$}; 
\end{tikzpicture} 
\caption{Converting the standard monomial expression to the web basis.}
\label{fig:equations of web invariants}
\end{figure}
\end{example}

\section{\texorpdfstring{$g$-vectors, dominant monomials, and tableaux}{g-vectors, dominant monomials, and tableaux}} \label{sec:g vectors and dominant monomials and tableaux}
We explain how results of Hernandez-Leclerc allow one to compute $g$-vectors for $\Gr(n,m)$ using the monoid ${\rm SSYT}(n,[m])$. 

Consider a cluster algebra whose initial cluster is denoted ${\bf x^{(0)}} = (x_1^{(0)},\dots,x_N^{(0)})$. For a vector $g \in \ZZ^N$ let $({\bf x^{(0)}})^g$ denote the Laurent monomial in these variables with exponent~$g$. Recall the Laurent monomial $\hat{y}_i = \prod_{j=1}^N (x_j^{(0)})^{\#(\text{ arrows } j \to i)- \#(\text{ arrows } i \to j)}$, defined for each mutable index $i \in [N]$, where $\#$ of arrows is computed with respect to the initial quiver. Then by definition, the $g$-{\sl vector} of a cluster monomial $x$ with respect to ${\bf x^{(0)}}$ is the exponent $g = g(x)$ such that $x = ({\bf x^{(0)}})^g F$, where $F$ is the {\sl $F$-polynomial} evaluated in the $\hat{y}_i$'s.

For our purposes, consider the initial cluster for $K_0(\mathcal{C}_\ell)$ indicated in Figure \ref{fig:initial cluster for Uqslhat5}. The initial cluster variables are of the form $[L(X_{i,j+1}^{(i-2j-2)})]$ where $i \in [n-1]$ and $j \in [0, \ell]$. Let ${\bf X} \subset \mathcal{P}^+$ be the dominant monomials $X_{i,j+1}^{(i-2j-2)}$ parameterizing the initial cluster variables. Let $N = |{\bf X}| = (n-1)(\ell+1)$. For $g \in \ZZ^N$, we let ${\bf X}^g \in \mathcal{P}$ denote the Laurent monomial in these variables with exponent $g$. 

By \eqref{eq:KR} the change of variables from the fundamental monomials $\{Y_{i,i-2j-2}\}_{i,j}$ to the initial monomials ${\bf X}$ is triangular, and in particular invertible. Therefore any $M \in \mathcal{P}^+$ can be uniquely expressed as $M = {\bf X}^g \in \mathcal{P}$ for some $g \in \ZZ^N$. Changing variables in this way, we obtain an injective homomorphism of monoids $\mathcal{P}_{\ell, A_{n-1}}^+ \hookrightarrow \ZZ^N$ sending $M \mapsto g_M$. 

Explicitly, any $L(M)$ in $\mathcal{C}_{\ell}^{A_{n-1}}$ is of the form $L(M)$ with
\begin{align*}
M = (Y_{1,-1}^{a_{1,0}} Y_{1,-3}^{a_{1,1}} \cdots Y_{1,-2\ell-1}^{a_{1,\ell}}) (Y_{2,0}^{a_{2,0}} Y_{2,-2}^{a_{2,1}} \cdots Y_{2,-2\ell}^{a_{2,\ell}}) \cdots (Y_{n-1,n-3}^{a_{n-1,0}} Y_{n-1,n-5}^{a_{n-1,1}} \cdots Y_{n-1,n-2\ell-3}^{a_{n-1,\ell}}),
\end{align*}
for nonnegative $(a_{i,j})_{i \in [n-1], j \in [0,\ell]}$. Using \eqref{eq:KR}, we also have that 
\begin{align*}
M = \ & (Y_{1,-1})^{g_{1,0}} (Y_{1,-1}Y_{1,-3})^{g_{1,1}} \cdots (Y_{1,-1}Y_{1,-3}\cdots Y_{1,-2\ell-1})^{g_{1,\ell}} \cdots \times \\
& \times (Y_{n-1,n-3})^{g_{n-1,0}} (Y_{n-1, n-3} Y_{n-1, n-5})^{g_{n-1,1}} \cdots (Y_{n-1,n-3}Y_{n-1,n-5} \cdots Y_{n-1, n-2\ell-3})^{g_{n-1,\ell}}.
\end{align*} 
Comparing these, it follows that 
$\sum_{k=j}^{\ell} g_{i,k} = a_{i,j}$ for $i \in [n-1]$ and $j \in [0, \ell]$, and these equations determine $g$. 

The following result is due to \cite[Section 5.2.2]{HL16}, see \cite[Section 2.6]{DS} for further explanation.  
\begin{lemma}[{\cite{HL16}}]\label{lem:hom from monomials to g vectors}
If $[L(M)]$ is a cluster monomial, then $g_M$ is its $g$-vector with respect to the initial seed in Figure~\ref{fig:initial cluster for Uqslhat5}.
\end{lemma}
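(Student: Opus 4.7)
The plan is to translate the defining equation of the $g$-vector across Hernandez-Leclerc's isomorphism $\mathcal{A}_\ell^{A_{n-1}} \cong K_0(\mathcal{C}_\ell)$, and then extract the $g$-vector by comparing highest $l$-weight monomials on each side. By definition, the $g$-vector $g([L(M)]) \in \ZZ^N$ of the cluster monomial $[L(M)]$ is the unique vector such that
\begin{equation}\label{eq:plangvector}
[L(M)] \;=\; \prod_{i \in I,\, j \in [0, \ell]} \bigl[L(X_{i,j+1}^{(i-2j-2)})\bigr]^{g([L(M)])_{i,j}} \cdot F(\hat{y}_{1,0}, \ldots, \hat{y}_{n-1,\ell-1}),
\end{equation}
where $F$ is the $F$-polynomial of $[L(M)]$, a polynomial with nonnegative integer coefficients and constant term~$1$. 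My goal is to show that this exponent vector agrees with the $g_M$ extracted from $M = {\bf X}^{g_M} \in \mathcal{P}$.

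First I apply the $q$-character, an injective ring homomorphism, to view \eqref{eq:plangvector} inside $\ZZ\mathcal{P}$, and I take the maximum monomial on each side with respect to the partial order \eqref{partial order of monomials}. On the left, the unique highest $l$-weight monomial of $\chi_q(L(M))$ is $M$ itself. For the right, iterating Lemma~\ref{lem:LMLMprime decomposition} shows that the highest $l$-weight monomial of $\prod \chi_q\bigl(L(X_{i,j+1}^{(i-2j-2)})\bigr)^{g([L(M)])_{i,j}}$ is the product of the individual highest weights, namely ${\bf X}^{g([L(M)])}$. It therefore remains to check that the highest $l$-weight monomial of $F(\hat{y}_{\bullet,\bullet})$ is its constant term~$1$. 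By Lemma~\ref{lem:yhat is Ainverse} we have $\hat{y}_{i,r} = A_{i,r-1}^{-1}$, so every monomial in $F$ of positive total degree in the $\hat{y}$'s equals a product of strictly negative powers of various $A_{i,s}$'s, which lies strictly below~$1$ in the partial order \eqref{partial order of monomials}. Combining these observations, ${\bf X}^{g([L(M)])} = M = {\bf X}^{g_M}$ in $\mathcal{P}$; since the $X$'s are multiplicatively independent (they are a triangular change of variables from the fundamentals $Y_{i,i-2j-2}$ via \eqref{eq:KR}), this forces $g([L(M)]) = g_M$.

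The main technical point, and potential obstacle, is the positivity assertion: that no cancellation between monomials of $F(\hat{y})$ occurs that could erase the constant term~$1$ or introduce a new dominant term. For type $A$ quantum affine algebras in our setting, this nonnegativity of $F$-polynomial coefficients follows from the monoidal categorification results of \cite{KKOP,Qin} (since $[L(M)]$ decomposes with nonnegative multiplicities into classes of simple objects of $\mathcal{C}_\ell$, which parallels the claim that $F$ has nonnegative coefficients), and in general it is the Gross-Hacking-Keel-Kontsevich theorem for skew-symmetric cluster algebras. A secondary check, that $g_M$ is well-defined and the map $g \mapsto {\bf X}^g$ is injective on $\ZZ^N$, is immediate from the triangularity of \eqref{eq:KR}. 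With these inputs in place, the highest-weight comparison above is unambiguous and the lemma follows.
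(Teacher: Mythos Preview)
Your argument is essentially the one in \cite[Section~5.2.2]{HL16}, which the paper cites rather than reproves: use Lemma~\ref{lem:yhat is Ainverse} to identify each $\hat{y}$ with an $A^{-1}$, note that the $F$-polynomial has constant term~$1$ so its unique maximal monomial in the order~\eqref{partial order of monomials} is~$1$, and then read off the $g$-vector from the highest $l$-weight.

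Two small comments. First, your appeal to Lemma~\ref{lem:LMLMprime decomposition} for the product $\prod \chi_q(L(X_{i,j+1}^{(i-2j-2)}))^{g_{i,j}}$ tacitly assumes $g_{i,j}\ge 0$; to handle negative entries cleanly, move the negative-exponent factors to the left-hand side of~\eqref{eq:plangvector} before comparing highest monomials, so that both sides lie in $\ZZ\mathcal{P}$. Second, your positivity worry is unnecessary: each nonconstant monomial $\hat{y}^\alpha$ in $F(\hat{y})$ is a nontrivial product of $A_{i,s}^{-1}$'s and hence lies strictly below~$1$, and since the $A_{i,s}$ are multiplicatively independent no such $\hat{y}^\alpha$ can equal~$1$. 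Thus the constant term~$1$ is the unique maximal monomial of $F(\hat{y})$ regardless of the signs of the coefficients, and no appeal to \cite{KKOP,Qin,GHKK} is needed for this step.
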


\begin{example}\label{eg:gvectors} Consider the dominant monomial $M = Y_{1,-3}Y_{1,-5}Y_{2,0}Y_{2,-2}$. Inverting \eqref{eq:KR} we have $M = \frac{X_{1,3}^{(-5)}X_{2,2}^{(-2)}}{X_{1,1}^{(-1)}}$. Thus the nonzero coordinates of the $g_M$ are $+1$ in the entries corresponding to
$[L(X_{1,3}^{(-5)})]$ and $[L(X_{2,2}^{(-2)})]$ and $-1$ in the entry corresponding to $[L(X_{1,1}^{(-1)})]$.
\end{example}

Let $T_i^{(0)}$ denote the tableau indexing the $i$th initial cluster variable. Applying $\widetilde{\Phi}$, Lemma~\ref{lem:hom from monomials to g vectors} says that the $g$-vector of $\ch(T) \in \CC[\Gr(n,m,\sim)]$ is the vector of exponents $g_1,\dots,g_N$ such that $T  = \cup_{i=0}^N (T_i^{(0)})^{g_i}$ inside $K_0({\rm SSYT}(n,m,\sim))$. Let us explain how this lifts to $\CC[\Gr(n,m)]$. First, recall that the $\ZZ^m$-degree of a cluster monomial $\ch(T)$ is the content of $T$. Second, recall that the $F$-polynomial has constant term one. So the $\ZZ^m$-degree of the monomial ${\bf x^{(0)}}^g$ should match the content of $T$ (where $x_i^{(0)} = \ch(T_i^{(0)})$). But if $T$ equals $\cup_{i=0}^N (T_i^{(0)})^{g_i}$ up to trivial tableaux, and if the two have the same content, it follows that in fact $T = \cup_{i=0}^N (T_i^{(0)})^{g_i}$. 

We summarize this discussion and give the corresponding statement for $c$-vectors. 
\begin{corollary}\label{corollary:correspondence between g vectors and real tableaux}
Let $T^{(0)}_1,\dots,T^{(0)}_{N}$ be the tableaux labeling the initial cluster variables for $\CC[\Gr(n,m)]$ (cf.~Figure~\ref{fig:initial cluster for a quotient of Gr(5,10)}). Let $\ch(T) \in \CC[\Gr(n,m,\sim)]$ be a cluster monomial. Then the $g$-vector of $\ch(T)$ with respect to this cluster is the vector $(g_1,\dots,g_N)$ of exponents when $T$ is written as  Laurent monomial in the initial tableaux; i.e. $T = \cup_{i=1}^N (T_i^{(0)})^{g_i}$.  

The $c$-vectors with respect to this initial cluster are as follows. For any ``distant'' cluster $\{\ch(T_1),\dots,\ch(T_N)\}$ for $\bbc[\Gr(n,m,\sim)]$, express each {\sl initial} cluster variable as a $\cup$-Laurent monomial in the distant cluster, i.e. $T^{(0)}_i = \cup_{j=1}^N (T_j)^{c'_{ij}}$. Then $(c'_{1j},\dots,c'_{Nj})$ is the $j$th $c$-vector of this cluster with respect to the initial seed. 
\end{corollary}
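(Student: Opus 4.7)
The $g$-vector statement is essentially laid out in the paragraph immediately preceding the corollary, and my plan is to make that reasoning into a clean argument. First I would set $M = \Psi(T) \in \mathcal{P}^+_\ell$ and invoke Lemma~\ref{lem:hom from monomials to g vectors}: the exponent vector $g_M$ obtained by writing $M$ as a Laurent monomial in the Kirillov-Reshetikhin monomials $X_{i,j+1}^{(i-2j-2)}$ is the $g$-vector of $[L(M)]$ with respect to the initial seed of $K_0(\mathcal{C}_\ell)$ pictured in Figure~\ref{fig:initial cluster for Uqslhat5}. Since the Hernandez-Leclerc isomorphism $\Phi$ of Theorem~\ref{thm:Hernandez-Leclerc quantum affine algebras and Grassmannians} respects cluster structures and matches the KR-initial seed with the initial seed of $\CC[\Gr(n,m,\sim)]$, sending $X_{i,j+1}^{(i-2j-2)}$ to the initial variable labelled by $T_i^{(0)}$, the same vector $g_M$ is the $g$-vector of $\ch(T)$. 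Applying $\widetilde\Phi$ to the identity $M = \prod (X_{i,j+1}^{(i-2j-2)})^{g_i}$ yields $T \sim \cup_i (T_i^{(0)})^{g_i}$ in ${\rm SSYT}(n,[m],\sim)$, and the lift to an equality in ${\rm SSYT}(n,[m])$ is forced by contents: since the $F$-polynomial has constant term one, the $\ZZ^m$-degree of $\ch(T)$ agrees with the $\ZZ^m$-degree of the Laurent monomial expression in the $\ch(T_i^{(0)})$'s, so ``$\sim$'' upgrades to ``$=$''.

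For the $c$-vector statement, my plan is to apply the Nakanishi-Zelevinsky tropical duality theorem, which asserts that the $c$-matrix $C$ and $g$-matrix $G$ of a seed $\Sigma$ measured against a reference seed satisfy $G^T C = I$, hence $C = (G^{-1})^T$. Let $G$ denote the matrix whose $j$-th column is the $g$-vector of $\ch(T_j)$; by the first part of the corollary, $G_{ij}$ is the exponent of the initial tableau $T_i^{(0)}$ appearing in the $\cup$-Laurent expansion of $T_j$. Sign-coherence (cf.~\cite{Mul}) implies that $G$ is unimodular over $\ZZ$, so one can invert the system $T_j = \cup_i (T_i^{(0)})^{G_{ij}}$ inside the free abelian group $K_0({\rm SSYT}(n,[m],\sim))$ to obtain $T_i^{(0)} = \cup_j (T_j)^{(G^{-1})_{ji}}$. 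Comparing with the defining relation $T_i^{(0)} = \cup_j (T_j)^{c'_{ij}}$ one reads off $c'_{ij} = (G^{-1})_{ji}$, and hence $C_{ij} = (G^{-1})_{ji} = c'_{ij}$; this says precisely that the $j$-th column of $C$ is $(c'_{1j}, \ldots, c'_{Nj})$, as claimed.

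The main point I would verify carefully is the convention matching between the tableau-theoretic picture and the standard $c$-/$g$-vector formalism, in particular that the transpose in $C = (G^{-1})^T$ converts ``rows of $G^{-1}$'' into the desired $c$-vectors; I would sanity-check this on the initial seed itself, where $G = C = I$ and all exponent expressions are trivial, and on the first mutation step. A secondary technical matter is that $\CC[\Gr(n,m,\sim)]$ has frozen variables rather than principal coefficients, but because $g$- and $c$-vectors live only in the mutable part and the extended exchange matrix for the Grassmannian has full rank, the Nakanishi-Zelevinsky duality applies directly after restriction. I do not anticipate a genuine obstacle beyond these bookkeeping issues, since every nontrivial ingredient (Lemma~\ref{lem:hom from monomials to g vectors}, the free abelian structure on ${\rm SSYT}(n,[m],\sim)$, and tropical duality) is already in hand.
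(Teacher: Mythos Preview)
Your proposal is correct and follows essentially the same route as the paper: the $g$-vector statement is deduced from Lemma~\ref{lem:hom from monomials to g vectors} by transporting along $\Phi$ and $\widetilde\Phi$ and then upgrading ``$\sim$'' to ``$=$'' via the content/$\ZZ^m$-degree argument, and the $c$-vector statement is obtained from the Nakanishi--Zelevinsky tropical duality $C=(G^{-1})^t$ by inverting the $\cup$-Laurent expressions. The paper's own proof is terser but uses exactly these ingredients; your extra remarks about convention-matching and the frozen/principal-coefficient issue are reasonable caution but not additional mathematical content.
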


The statement about $c$-vectors follows from the tropical duality $C = (G^{-1})^{t}$ between $c$-vectors and $g$-vectors \cite[Theorem 1.2]{NZ}. Here, 
$C$ and $G$ denote the matrices whose columns are the $c$- and $g$-vectors of the distant cluster with respect to our initial seed, and $t$ denotes transposition of matrices. 

\begin{example}
The tableau $T = \begin{ytableau}
1 & 2  \\
3 & 4 \\
5 & 6 
\end{ytableau}$ has $\Psi(T) = M$ where $M$ is the monomial from Example~\ref{eg:gvectors}. This tableau has the following expression as a Laurent monomial in the tableaux for the initial cluster:  
$$\begin{ytableau}
1 & 2  \\
3 & 4 \\
5 & 6 
\end{ytableau} = \begin{ytableau}
1   \\
2  \\
6  
\end{ytableau} \cup \begin{ytableau}
1   \\
4  \\
5  
\end{ytableau} \cup \begin{ytableau}
2   \\
3  \\
4  
\end{ytableau}
\cup 
\begin{ytableau}
1   \\
2  \\
4  
\end{ytableau}^{-1}
.
$$
This matches the previous computation, noting that the third factor in the numerator is trivial. 
\end{example}

\begin{remark}
The method for computing Corollary~\ref{corollary:correspondence between g vectors and real tableaux} is a special property of the initial seed $\{T^{(0)}_1,\dots,T^{(0)}_N\}$ in Figure~\ref{fig:initial cluster for a quotient of Gr(5,10)}. If we compute a $g$-vector with respect to a different initial seed $\{T_1,\dots,T_N\}$, then $\cup T_i^{g_i}$ is {\sl not} the tableau corresponding to this $g$-vector. Indeed $\cup T_i^{g_i}$ is often a nontrivial fraction of tableaux (i.e., an element of $K_0({\rm SSYT}(n,[m]))$ rather than of ${\rm SSYT}(n,[m])$). This is to be expected, because the initial-seed recursion for $g$-vectors \cite[Proposition 4.2(v)]{NZ} is piecewise linear, rather than linear.
\end{remark}

\section{Reality, primeness, and compatibility of cluster variables} \label{sec: real modules}
We give examples of nonreal modules and tableaux, and compare $\{\ch(T)\}$ with the basis of web invariants. We end by discussing primeness of modules and tableaux, and compatibility of cluster variables. 
\subsection{Smallest non-real examples}
Recall that a tableau $T$ is called real when $L(M_T)$ is real. By Lemma \ref{lem:qcharacter condition of real modules and prime modules} and Theorems \ref{thm:Hernandez-Leclerc quantum affine algebras and Grassmannians}, \ref{thm: parametrization of simple modules by tableaux}, $T \in {\rm SSYT}(n,[m],\sim)$ is real if and only if $\ch(T) \ch(T) = \ch(T \cup T)$ and a $U_q(\widehat{\mathfrak{g}})$-module $L(M)$ is real if and only if $\chi_q(M^2) = \chi_q(M) \chi_q(M)$. We start by cataloguing the smallest examples of nonreal tableaux. 
\begin{example}\label{eg:catalogue}
Consider $\Gr(n,m)$ where $2 \leq n \leq \frac{m}{2}$. (One can focus on these since $\Gr(n,m) \cong \Gr(m-n,m)$ .) It is known that such a Grassmannian has finite cluster type exactly when $n=2$ or $(n,m) \in \{(3,6),(3,7),(3,8)\}$. In these cases, every $\ch(T)$ is a cluster monomial and every simple module is real. 

The smallest Grassmannians which are not of finite type are $\Gr(3,9)$ and $\Gr(4,8)$, corresponding to $\mathcal{C}_{5}^{A_{2}}$ and 
$\mathcal{C}_{3}^{A_{3}}$ respectively. Consider the following tableaux $T_1,T_2,T_3,T_4,T_5$: 
\begin{equation}
\begin{ytableau}
1 & 3 & 4 \\
2 & 6 & 7 \\
5 & 8 & 9
\end{ytableau}, \hspace{.35cm}
\begin{ytableau}
1 & 2 & 5 \\
3 & 4 & 8 \\
6 & 7 & 9
\end{ytableau} , \hspace{.35cm}
\begin{ytableau}
1 & 2 & 3 \\
4 & 5 & 6 \\
7 & 8 & 9
\end{ytableau} , \hspace{.35cm}
\begin{ytableau}
1 & 3  \\
2 & 5  \\
4 & 7  \\
6 & 8
\end{ytableau} , \hspace{.35cm}
\begin{ytableau}
1 & 2  \\
3 & 4  \\
5 & 6  \\
7 & 8
\end{ytableau}.
\end{equation}
The webs corresponding to these five tableaux are the rotations of 
$$
\begin{tikzpicture}
\draw (22.5:.25cm)--(112.5:.5cm)--(202.5:.25cm)--(292.5:.5cm)--(22.5:.25cm);
\draw (45:1cm)--(22.5:.8cm)--(0:1cm);
\draw (225:1cm)--(202.5:.8cm)--(180:1cm);
\draw (.6399cm,.356cm)--(.13cm,.145cm);
\draw (.689cm,.256cm)--(.18cm,.045cm);
\draw (-.6399cm,-.356cm)--(-.13cm,-.145cm);
\draw (-.689cm,-.256cm)--(-.18cm,-.045cm);
\draw (90:1cm)--(112.5:.5cm)--(135:1cm);
\draw (270:1cm)--(292.5:.5cm)--(315:1cm);
\filldraw[black] (22.5:.25cm) circle (0.1cm);
\draw (22.5:.25cm) circle (0.1cm);
\filldraw[black] (202.5:.25cm) circle (0.1cm);
\draw (202.5:.25cm) circle (0.1cm);
\filldraw[white] (112.5:.5cm) circle (0.1cm);
\draw (112.5:.5cm) circle (0.1cm);
\filldraw[white] (292.5:.5cm) circle (0.1cm);
\draw (292.5:.5cm) circle (0.1cm);
\filldraw[white] (22.5:.8cm) circle (0.1cm);
\draw (22.5:.8cm) circle (0.1cm);
\filldraw[white] (202.5:.8cm) circle (0.1cm);
\draw (202.5:.8cm) circle (0.1cm);
\begin{scope}[xshift = -5cm]
\draw (130:1cm)--(150:.8cm)--(170:1cm);
\draw (250:1cm)--(270:.8cm)--(290:1cm);
\draw (10:1cm)--(30:.8cm)--(50:1cm);
\draw (30:.5cm)--(90:.5cm)--(150:.5cm)--(210:.5cm)--(270:.5cm)--(330:.5cm)--(30:.5cm);
\draw (90:.5cm)--(90:1cm);
\draw (330:.5cm)--(330:1cm);
\draw (210:.5cm)--(210:1cm);
\draw (150:.5cm)--(150:.8cm);
\draw (30:.5cm)--(30:.8cm);
\draw (270:.5cm)--(270:.8cm);
\filldraw[white] (90:.5cm) circle (0.1cm);
\draw (90:.5cm) circle (0.1cm);
\filldraw[white] (210:.5cm) circle (0.1cm);
\draw (210:.5cm) circle (0.1cm);
\filldraw[white] (330:.5cm) circle (0.1cm);
\draw (330:.5cm) circle (0.1cm);
\filldraw[black] (30:.5cm) circle (0.1cm);
\draw (30:.5cm) circle (0.1cm);
\filldraw[black] (150:.5cm) circle (0.1cm);
\draw (150:.5cm) circle (0.1cm);
\filldraw[black] (270:.5cm) circle (0.1cm);
\draw (270:.8cm) circle (0.1cm);
\filldraw[white] (30:.8cm) circle (0.1cm);
\draw (30:.8cm) circle (0.1cm);
\filldraw[white] (150:.8cm) circle (0.1cm);
\draw (150:.8cm) circle (0.1cm);
\filldraw[white] (270:.8cm) circle (0.1cm);
\draw (270:.8cm) circle (0.1cm);
\end{scope}
\end{tikzpicture},
$$
the first of which is an $\mathfrak{sl}_3$ web and the second of which is an $\mathfrak{sl}_4$ web. The three cyclic shifts of the first web correspond to $W(T_1),W(T_2),W(T_3)$. There is no labeling of webs by tableaux when $n=4$, so we define $W(T_4),W(T_5)$ to be the two cyclic shifts of the second web. (This is the ``correct'' thing to do e.g. by \cite[Appendix]{FLL}). The correspondence between tableaux and cyclic shifts is fixed by the forks mnemonic above. Alternatively, we give the following explicit formulas: 
\begin{align*}
[W(T_1)] &= P_{145}P_{278}P_{369}-P_{245}P_{178}P_{369}-P_{123}P_{456}P_{789}-P_{129}P_{345}P_{678} \\
[W(T_4)] &= P_{1247}P_{3568}-P_{1237}P_{4568}-P_{1248}P_{3567}+P_{1238}P_{4567},
\end{align*}
with the other formulas determined by cyclically shifting indices. Using \eqref{eq:liftofchT} on a computer, 
we checked that $\ch(T_i) = W(T_i)$ for $i=1,\dots,5$. In particular, each $\ch(T_i)$ is in the cluster algebra. Moreover, the elements $\ch(T_1),\ch(T_2),\ch(T_3)$ are dual canonical by \cite[Theorem 1]{KK}. 

Using \eqref{eq:liftofchT} on a computer,  we checked that the tableau $T_1,T_4$ are not real, i.e. that $\ch(T_i \cup T_i) \neq \ch(T_i)^2$. Presumably, the cyclic shifts are also nonreal. It is known that these five are the ``smallest'' nonreal tableaux, meaning that every other tableaux in ${\rm SSYT}(3,[9])$ (resp. ${\rm SSYT}(4,[8])$) with at most three (resp. two) columns is a cluster monomial. We explicitly calculate $\ch(T_4 \cup T_4)$ and $\ch(T_1 \cup T_1)$ in Examples~\ref{example:non-real tableaux} and \ref{eg:T4}. 
\end{example}

\begin{remark}\label{rmk:notsurjective}
Consider the tableau $T_1$ from Example~\ref{eg:catalogue}. It has $\ch(T_1) = W(T_1)$ the web from above. We will show directly that $\ch(T_1) \neq {\rm MS}^*(f)$ for any dual canonical basis element $f \in \CC[\GL_9]$, where ${\rm MS} \colon \Gr(n,m) \to \GL_m$ is in \eqref{eq:MS}. By degree considerations, such an $f$ would correspond to a $3 \times 3$ generalized submatrix. Moreover, note that $W(T_1)$ has $\ZZ^9$-degree $(1,\dots,1)$. Each Pl\"ucker coordinate in the matrix ${\rm MS}(x)$ has gap weight $\leq 1$. The only way for a product of three such Pl\"ucker coordinates to have $\ZZ^9$-degree $(1,\dots,1)$, is if the product is a cyclic shift of either $P_{123}P_{456}P_{789}$ or $P_{124}P_{356}P_{789}$. The first option is a web, and the second option is a sum of two webs: $P_{124}P_{356}P_{789} = P_{123}P_{456}P_{789}+P_{789}\ch([1,2,4],[3,5,6])$ (cf.~Example~\ref{eg:124356web} for the web $\ch([1,2,4],[3,5,6])$). Clearly, $W(T_1)$ is not a linear combination of webs such as these, so that $W(T_1)$ is not the pullback of any $3 \times 3$ Kazhdan-Lusztig immanant. In order to ``see'' $W(T_1)$ as a Kazhdan-Lusztig immanant, it seems necessary to compute the $8\times 8$ immanant corresponding to the small gaps version of $T_1$, and then divide by appropriate frozen variables as in \eqref{eq:liftofchT}.  
\end{remark}

\subsection{Lapid-M\'{i}nguez's criterion}
 A representation $\pi$ in ${\rm Irr}$ is called {\sl square-irreducible} if $\pi \times \pi$ is irreducible, \cite{LM}. A multi-segment ${\bf m}=\Delta_1+\cdots +\Delta_k$ is called {\sl regular} if its left endpoints $b(\Delta_1), \ldots, b(\Delta_k)$ are distinct and if the same is true of its right endpoints $e(\Delta_1), \ldots, e(\Delta_k)$ \cite{LM}. 

A regular multi-segement ${\bf m} = \sum_{j=1}^k \Delta_j$, $k \ge 4$ with $e(\Delta_1)> \cdots > e(\Delta_k)$ is called of type $4231$ (resp. $3412$) if 
\begin{align*}
\Delta_{i+1} \prec \Delta_i, \ i=3, \ldots, k-1, \ \Delta_3 \prec \Delta_1, \ b(\Delta_k)<b(\Delta_2)<b(\Delta_{k-1})
\end{align*}
(resp. 
\begin{align*}
\Delta_{i+1} \prec \Delta_i, \ i=4, \ldots, k-1, \ \Delta_4 \prec \Delta_2, \ b(\Delta_3)<b(\Delta_k)<b(\Delta_1)<b(\Delta_l),
\end{align*}
where $l=2$ if $k=4$ and $l=k-1$ otherwise), cf. \cite[Definition 6.10]{LM}. 

Lapid and M\'{i}nguez classified square-irreducible representations for regular multi-segments: 

\begin{theorem}[{\cite{LM}}] \label{thm: Lapid-Minguez classification}
For a regular multi-segment ${\bf m}$, the representation ${\rm Z}({\bf m})$ is square-irreducible if and only if ${\bf m}$ does not admit a sub-multi-segment of type $4231$ or $3412$. 

For sufficiently large $n$, a $U_q(\widehat{\mathfrak{sl}_n})$-module is real if and only if ${\bf m}_M$ does not admit a sub-multi-segment of type $4231$ or $3412$. 
\end{theorem}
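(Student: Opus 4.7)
The first assertion is due to Lapid-M\'inguez \cite{LM} and I would take it as given. For the second, the plan is to transport the criterion along the Chari-Pressley equivalence of categories \cite{CP96b} recalled in Section~\ref{sec:qcharacter formulas}, which identifies finite-dimensional representations of the Iwahori-Hecke algebra $H_N$ with an appropriate subcategory of finite-dimensional $U_q(\widehat{\mathfrak{sl}_n})$-modules provided $N \le n$.

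Concretely, I would fix a dominant monomial $M \in \mathcal{P}^+$ whose associated multi-segment ${\bf m}_M$ is regular, and set $N = \deg({\bf m}_M)$. For any $n \ge N$, the Chari-Pressley equivalence matches the simple Hecke module ${\rm Z}({\bf m}_M)$ with the simple $U_q(\widehat{\mathfrak{sl}_n})$-module $L(M)$ under the bijection \eqref{eq:multisegtomonom} between multi-segments and dominant monomials. Crucially, it intertwines the parabolic induction product ``$\times$'' with the tensor product of $U_q(\widehat{\mathfrak{sl}_n})$-modules, so that ${\rm Z}({\bf m}_M) \times {\rm Z}({\bf m}_M)$ is irreducible if and only if $L(M) \otimes L(M)$ is simple. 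By definition, this says that ${\rm Z}({\bf m}_M)$ is square-irreducible iff $L(M)$ is real. Applying the first part of the theorem to the regular multi-segment ${\bf m}_M$ then yields the stated characterization in terms of the forbidden patterns $4231$ and $3412$.

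The main obstacle in executing the plan is the careful bookkeeping in the equivalence of categories. One needs to verify that (i) simples correspond to simples under the stated bijection (which holds by the classification of simples on both sides via multi-segments), (ii) tensor product matches ``$\times$'' up to reparameterizations that do not affect simplicity of the tensor square (as used e.g.~in \cite{KKKO}), and (iii) the combinatorial condition of admitting a $4231$ or $3412$ sub-multi-segment is intrinsic to ${\bf m}_M$ and independent of the ambient $n$. The phrase ``sufficiently large $n$'' in the statement refers precisely to the threshold $n \ge \deg({\bf m}_M)$ required for the Chari-Pressley equivalence to apply; once this threshold is met, reality of $L(M)$ becomes $n$-independent, which is consistent with the purely combinatorial nature of the Lapid-M\'inguez criterion.
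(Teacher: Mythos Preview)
The paper does not supply a proof of this theorem; it is attributed directly to \cite{LM} and stated without argument. Your approach---taking the first assertion from \cite{LM} as given and transporting it to the quantum affine setting via the Chari--Pressley equivalence recalled in Section~\ref{sec:qcharacter formulas}---is exactly the implicit justification the paper relies on, and the argument is correct.

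One minor refinement: checking reality of $L(M)$ requires examining the tensor square $L(M)\otimes L(M)$, which under the equivalence corresponds to an $H_{2N}$-module with $N=\deg({\bf m}_M)$. So the threshold for the Chari--Pressley equivalence to apply to the full argument is $n \ge 2\deg({\bf m}_M)$ rather than $n \ge \deg({\bf m}_M)$. This does not affect the statement, which only asserts the criterion for ``sufficiently large $n$''.
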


It would be quite interesting to generalize Theorem~\ref{thm: Lapid-Minguez classification} to nonregular multi-segments. 

We can rephrase the theorem in the language of small gaps tableaux. Regularity of the multi-segment ${\bf m}$ means that both sequences ${\bf i, j}$ from Definition~\ref{defn:usemicolonT} are without repetitions. Recall that the $a$th column of $T'$ is obtained from $[i_a,i_a+n]$ by removing an element $r_a \in (i_a,i_a+n)$ (explicitly $r_a = j_{w^{-1}(a)}$). Then the small gaps tableau $T'$ is real provided the sequence $r_1,\dots,r_k$ avoids both patterns $3412$ and $1324$.  

\begin{remark}
The ``minimal'' small gaps tableau $T'$ that contains the pattern $1324$ occurs in ${\rm SSYT}(4,[8])$ and has columns $[1,2,4,5],[2,3,4,6],[3,5,6,7],[4,5,7,8]$. The sequence $r_1,r_2,r_3,r_4$ is $3,5,4,6$, which is an instance of the pattern $1324$. There is an equivalence $T' \sim T_4$ where $T_4$ is the nonreal tableau from Example~\ref{eg:catalogue}. This small gaps tableau corresponds to the multisegment $[0,1]+[-2,0]+[-1,-1]+[-3,-2]$ (we have ordered the summands so that the right endpoints are weakly decreasing as in \eqref{eq:lammu}). The left endpoints form the pattern $4231$. Thus, the nonreality of $T_4$ follows from Theorem~\ref{thm: Lapid-Minguez classification}. The other nonreal tableaux in Example~\ref{eg:catalogue} are nonregular. 

If we translate this same multi-segment to a tableau using $n=3$, we get the tableau $T''$ with columns $[1,3,4],[2,3,5],[4,5,6],[4,6,7]$. This is not a small gaps tableau, so the reality criterion does not apply. In fact, $\ch(T'')$ must be a cluster monomial since $\CC[\Gr(3,7)]$ has finite cluster type.  
\end{remark}

\subsection{Zelevinsky duality} \label{sec:Zelevinsky dual}
Zelevinsky duality \cite{Zel} is an involution on $\mathcal{R}^G$ that preserves square-irreducibility. We will describe it via the {\sl M\oe{}glin-Waldspurger algorithm} \cites{MW,BR} for computing it on multi-segments. 

If $\Delta = [b,e]$ is a segment, we set $\Delta^- =[b,e-1]$, with the convention that $\Delta^-$ is empty if $b=e$. Given a multi-segment 
${\bf m}=\sum_{i=1}^k \Delta_i$, we define its {\sl Zelevinsky dual} multi-segment ${\bf m}^{\sharp}$ \cites{Zel, MW, LM} by induction on the degree of multi-segments. To begin, choose segments $\Delta_{i_0} = [b_{i_0},e_{i_0}],\dots,\Delta_{i_r} = [b_{i_r},e_{i_r}]$ in ${\bf m}$ defined as follows. First, $e_{i_0} = \max_{i \in [k]} e(\Delta_i)$ is the rightmost endpoint appearing in ${\bf m}$, and $e_{i_s} = e_{i_0}-s$ for $s=1,\dots,r$. Second, for $s=0,\dots,r$, $b_{i_s}$ is largest amongst left endpoints whose right endpoint is $e_{i_s}$. And third, $r$ is maximal, meaning that ${\bf m}$  contains no segment $[b,e_{i_0}-r]$ with $b < b_{i_r}$. The $\Delta_{i_0},\dots,\Delta_{i_r}$ with these properties are well defined as abstract segments, although since ${\bf m}$ can have repetitions there might be choices for ``which'' segment is called $\Delta_{i_s}$. We define $\Delta_i'$ as either $\Delta_i^-$ or $\Delta_i$ according to whether $i \in \{i_0,\dots,i_r\}$ or not. We define the multi-segment ${\bf m'} =\sum_{i = 0}^k \Delta_{i}'$, which is a multi-segment of strictly smaller degree. Finally, we can therefore define ${\bf m}^\sharp = [e_{i_0}-r,e_{i_0}]+({\bf m'})^\sharp$ by induction on degree. This a degree-preserving involution on multi-segments. 

This definition extends by linearity to an involution of $\mathcal{R}^G$. It is easy to see that $({\bf  m} + {\bf m})^\sharp = {\bf m}^\sharp + {\bf m}^\sharp$ for any multi-segment ${\bf m}$. We caution that $({\bf m}_1 + {\bf m}_2)^\sharp $ does not have to coincide with ${\bf m}_1^\sharp + {\bf m}_2^\sharp$ for distinct ${\bf m}_1, {\bf m}_2$.

\begin{proposition}[{\cite[Proposition 3.15]{LM}}] \label{thm:Zelevinsky dual preserve reality}
If ${\rm Z}({\bf m}) \in {\rm Irr}$ is square-irreducible, then ${\rm Z}({\bf m}^\sharp)$ is also square-irreducible.
\end{proposition}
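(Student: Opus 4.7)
The plan is to use two facts about Zelevinsky duality: first, that it lifts from ${\rm Irr}$ to a ring involution on the Grothendieck ring $\mathcal{R}^G$ (up to signs on irreducible classes); and second, the easy combinatorial identity $({\bf m}+{\bf m})^\sharp={\bf m}^\sharp+{\bf m}^\sharp$ already noted in the excerpt. Together these reduce the proposition to a one-line manipulation in $\mathcal{R}^G$.

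In more detail, I would invoke the Aubert--Zelevinsky duality operator $D$ on $\mathcal{R}^G$, which is a ring automorphism satisfying $D([{\rm Z}({\bf n})])=\varepsilon({\bf n})\,[{\rm Z}({\bf n}^\sharp)]$ for each multi-segment ${\bf n}$, where $\varepsilon({\bf n})\in\{\pm1\}$. Assume $\pi={\rm Z}({\bf m})$ is square-irreducible. By definition $\pi\times\pi$ is irreducible, and since ${\bf m}+{\bf m}$ is the leading multi-segment of $\pi\times\pi$ this forces the equality $[\pi\times\pi]=[{\rm Z}({\bf m}+{\bf m})]$ in $\mathcal{R}^G$. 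Applying $D$ and using that it is a ring homomorphism,
\[
\varepsilon({\bf m})^2\,[{\rm Z}({\bf m}^\sharp)\times{\rm Z}({\bf m}^\sharp)]=\varepsilon({\bf m}+{\bf m})\,[{\rm Z}(({\bf m}+{\bf m})^\sharp)].
\]
Since $\varepsilon({\bf m})^2=1$ and $({\bf m}+{\bf m})^\sharp={\bf m}^\sharp+{\bf m}^\sharp$, this simplifies to
\[
[{\rm Z}({\bf m}^\sharp)\times{\rm Z}({\bf m}^\sharp)]=\pm\,[{\rm Z}({\bf m}^\sharp+{\bf m}^\sharp)].
\]
The left-hand side is the class of a genuine representation, hence a $\ZZ_{\ge 0}$-combination of irreducible classes, so the sign must be $+$. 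Thus ${\rm Z}({\bf m}^\sharp)\times{\rm Z}({\bf m}^\sharp)$ is irreducible, and ${\rm Z}({\bf m}^\sharp)$ is square-irreducible as claimed.

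The main obstacle is justifying that Aubert--Zelevinsky duality really is a ring involution on $\mathcal{R}^G$ whose effect on irreducible classes is given, up to an overall sign, by the combinatorial $\sharp$-operation constructed through the M\oe{}glin--Waldspurger algorithm recalled above. The first assertion is Aubert's theorem (proved independently by Schneider--Stuhler), and the second is Zelevinsky's theorem identifying the dual of a socle. Once these are granted, the proof collapses to the short Grothendieck-ring manipulation above, with the identity $({\bf m}+{\bf m})^\sharp={\bf m}^\sharp+{\bf m}^\sharp$ supplying the final piece needed to conclude; the positivity of actual classes in $\mathcal{R}^G$ is what eliminates the potential sign ambiguity and upgrades a Grothendieck-level equality to genuine irreducibility of ${\rm Z}({\bf m}^\sharp)\times{\rm Z}({\bf m}^\sharp)$.
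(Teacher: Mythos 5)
The paper offers no proof of this proposition—it simply cites \cite[Proposition 3.15]{LM}—so the right comparison is with the standard argument in that source, and your proof is essentially that argument, correctly executed given the external inputs you name: that Aubert--Zelevinsky duality $D$ is a ring automorphism of $\mathcal{R}^G$ (it commutes with parabolic induction), that it sends irreducible classes to irreducible classes up to sign, and that the M\oe{}glin--Waldspurger algorithm computes the dual multisegment, i.e.\ $D([{\rm Z}({\bf n})])=\pm[{\rm Z}({\bf n}^\sharp)]$. These are genuine outside theorems (the paper only defines ${\bf m}^\sharp$ via the algorithm), but they are exactly the inputs the cited proof uses, and your sign-elimination via positivity of effective classes is sound. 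Two remarks on streamlining: you do not need to identify $[\pi\times\pi]$ with $[{\rm Z}({\bf m}+{\bf m})]$, nor do you need the identity $({\bf m}+{\bf m})^\sharp={\bf m}^\sharp+{\bf m}^\sharp$ (which the paper states without proof); it suffices to observe that $[{\rm Z}({\bf m}^\sharp)\times{\rm Z}({\bf m}^\sharp)]=D([\pi\times\pi])$ is, up to sign, the class of a single irreducible, and positivity then forces the product to have length one, hence be irreducible. Identifying \emph{which} irreducible it is comes for free afterwards from the leading-term fact applied to ${\bf m}^\sharp$. One small attribution point: the identification of the dual of ${\rm Z}({\bf m})$ with ${\rm Z}({\bf m}^\sharp)$ for the combinatorially defined ${\bf m}^\sharp$ is due to M\oe{}glin--Waldspurger rather than to Zelevinsky's socle theorem, so that is the theorem you should cite at that step.
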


For $M \in \mathcal{P}^+$, we denote $M^{\sharp}=M_{{\bf m}^{\sharp}}$ and call $M^{\sharp}$ the Zelevinsky dual of $M$.

\begin{example}\label{example:non-real tableaux}
Let 
$T_1 = \begin{ytableau}
1 & 3 & 4   \\
2 & 6 & 7 \\
5 & 8 & 9
\end{ytableau}$ be the nonreal tableau from Example~\ref{eg:catalogue}. It has gap weight 8. It corresponds to the dominant monomial
$Y_{1,-1} Y_{1,-3} Y_{2,-4} Y_{2,-6}^2 Y_{1,-9} Y_{2,-8} Y_{1,-11}$, and to the multi-segment 
$${\bf m} = [0, 0] + [-1, -1] + [-2, -1] + [-3, -2] + [-3, -2] + [-4, -3] + [-4, -4] + [-5, -5].$$ It is not so difficult to check nonreality 
of $T_1$ by showing that $\ch(T_1 \cup T_1)$ and $\ch(T_1)^2$ disagree in a particular standard monomial coefficient. On the other hand, directly computing $\ch(T_1 \cup T_1)$ is cumbersome: there are many permutations $u \in S_{16}$ such that $P_{u;T ' \cup T'} \neq 0$, thus many Kazhdan-Lusztig values to compute. One can simplify such a calculation using Zelevinsky duality. The first contribution to ${\bf m}^\sharp$ is the segment $[-3,0]$, with the segments 
$[0,0],[-1,-1],[-3,-2],[-4,-3]$ serving as $\Delta_{i_0},\dots,\Delta_{i_3}$ in the definition. The end result is  
\begin{align*}
 {\bf m}_{M^{\sharp}} = [-3, 0] + [-2, -1] + [-5, -2] + [-4, -3], \hspace{.5cm} \text{ and } M^{\sharp} = Y_{2,-4} Y_{4,-4} Y_{2,-8} Y_{4,-8}. \\
\end{align*}
The advantage is that ${\bf m}_{M^{\sharp}}$ has $k=4$, so we can test its nonreality in $S_8$. 
By applying the formula in Theorem \ref{thm:multi-segment character formula}, one has that
\begin{align} \label{eq: decomposition of tensor product 2}
\begin{split}
& [{\rm Z}({\bf m}_M^{\sharp})] [{\rm Z}({\bf m}_M^{\sharp})] = [{\rm Z}({\bf m}_M^{\sharp} + {\bf m}_M^{\sharp})] \\
&  \quad \quad + [{\rm Z}([-3, 0] + [-5, 0] + [-2, -1] + [-4, -1] + [-3, -2] + [-5, -2] + [-4, -3])].
\end{split}
\end{align}
(We are very grateful to Erez Lapid for his computer program to compute this decomposition.) Therefore
\begin{align*}
& [{\rm Z}({\bf m}_M) ] [{\rm Z}({\bf m}_M)] = [{\rm Z}({\bf m}_M + {\bf m}_M)] + [{\rm Z}({\bf m}')], \text{ where }\\
& {\bf m}' = [0, 0] + [-2, 0] + [-1, -1] + [-3, -1] + [-3, -1] + [-2, -2] \\
& \quad \quad + [-4, -2] +[-4, -2] + [-3, -3] + [-5, -3] + [-4, -4] + [-5, -5].
\end{align*}
Translating to $q$-characters, 
\begin{align*}
\chi_q(M)\chi_q(M) & = \chi_q(M^2) + \chi_q(Y_{1,-1} Y_{1,-3} Y_{1,-5} Y_{3,-3} Y_{1,-7} Y_{3,-5}^2 Y_{1,-9} Y_{3,-7}^2 Y_{1,-11} Y_{3,-9}).
\end{align*}
Translating to tableaux in the case $n=3$, 
\begin{align*}
\ch(T_1)^2 = \ch(T_1 \cup T_1) + \ch(\begin{ytableau}
1 & 1 & 3 & 4 & 6 & 7\\
2 & 2 & 4 & 5 & 7 & 8  \\
3 & 5 & 6 & 8 & 9 & 9 
\end{ytableau}) \in \CC[\Gr(3,9)]. 
\end{align*}
We let $F$ be the product of frozen variables appearing as the second term on the right hand side, and note that 
$F = P_{1,2,9} \in \CC[\Gr(3,9,\sim)]$. 
It is easy to see that $\ch(T_1)^2 = W(T_1)^2 = W(T_1 \cup T_1)$. In fact, {\sl every} power of the web $W(T_1)$ is a web given by the $k$-{\sl thickening} procedure \cite[Definition 10.8]{FP}. Thus $\ch(T_1 \cup T_1) = W(T_1)^2-F$ is {\sl not} a web, but rather a difference of two webs. This matches behavior of the dual canonical basis \cite[Theorem 4]{KK}. 
\end{example}

\begin{example}\label{eg:T4}
Let 
$T_4 = \begin{ytableau}
1 & 3    \\
2 & 5  \\
4 & 7  \\
6 & 8 
\end{ytableau}$ as in Example~\ref{eg:catalogue}. Using \eqref{eq:liftofchT} on a computer, we checked that 
$$\ch(T_4 \cup T_4) = \ch(T_4)^2 - \ch(\begin{ytableau}
1 & 1 & 3 & 5    \\
2 & 2 & 4 & 6  \\
3 & 5 & 7 & 7 \\
4 & 6 & 8 & 8 
\end{ytableau}) \in \CC[\Gr(4,8)].$$
The last term simplifies to $P_{1278} \in \CC[\Gr(4,8,\sim)]$. The corresponding dominant monomial is $M = Y_{2,-6} Y_{1,-3} Y_{3,-3} Y_{2,0}$. The simple module $L(M)$ is also shown to be a nonreal $U_q(\widehat{\mathfrak{sl}_4})$-module in \cite[Section 13]{HL10}. 
\end{example}

\subsection{Prime modules} \label{sec:prime modules}
By Lemma \ref{lem:qcharacter condition of real modules and prime modules} and Theorems \ref{thm:Hernandez-Leclerc quantum affine algebras and Grassmannians}, \ref{thm: parametrization of simple modules by tableaux}, a tableau $T \in {\rm SSYT}(n,[m],\sim)$ is prime if and only if there are no nontrivial tableaux $T', T''$ such that $\ch(T) = \ch(T')\ch(T'')$.

Clearly, if $\ch(T) = \ch(T')\ch(T'')$, then $T' \cup T'' = T$. For a given semistandard tableau $T$, there are finitely many pairs of semistandard tableau $T', T''$ such that $T' \cup T'' = T$. Therefore to check whether a semistandard Young tableau is prime or not, it suffices to exhaust over all such pairs, checking whether 
$\ch(T) = \ch(T')\ch(T'')$ using the formula in Theorem~\ref{cor:qcharacter formula tableaux}.  

Equivalently, one can check whether a simple $U_q(\widehat{\mathfrak{sl}_n})$-module $L(M)$ is prime by checking whether there are simple modules $L(M')$, $L(M'')$ such that $\chi_q(M) = \chi_q(M')\chi_q(M'')$.

\begin{remark}
Usually for a tableau $T$, the expression for $\ch(T)$ is complicated. When one knows that $\ch(T)$ happens to be a cluster monomial, one can use cluster algebras to check whether $T$ is prime. 
\end{remark}

\begin{example}\label{example: prime and non-prime modules}
By the exchange relation in Example~\ref{example:exchange relations}, the tableau $T = \begin{ytableau}
1 & 3 & 4  \\
2 & 5 & 6 \\
4 & 7 & 8 
\end{ytableau}$ labels a cluster variable $\ch(T)  = \CC[\Gr(3,8)]$.
By \cite[Theorem 1.2.1]{Qin}, it follows that the corresponding module $L(M_T) = L(Y_{1,-1} Y_{2,-4} Y_{1,-7} Y_{2,-6} Y_{1,-9})$ is prime.
\end{example}

The elements $\ch(\begin{ytableau}
1  \\
2 \\
8 
\end{ytableau})$ and  $\ch(\begin{ytableau}
3 & 4  \\
5 & 6 \\
7 & 8
\end{ytableau})$ are compatible cluster variables. Therefore 
\begin{align*}
\ch(\begin{ytableau}
1 & 3 & 4  \\
2 & 5 & 6 \\
7 & 8 & 8
\end{ytableau}) =
\ch(\begin{ytableau}
1  \\
2 \\
8 
\end{ytableau}) \ch(\begin{ytableau}
3 & 4  \\
5 & 6 \\
7 & 8
\end{ytableau}).
\end{align*}
is a nontrivial cluster monomial. 

By \cite[Theorem 1.2.1]{Qin}, the corresponding module 
\begin{align*}
L(M_T) = L(Y_{1,-1}  Y_{1,-3}  Y_{1,-5}  Y_{2,-4}  Y_{1,-7}^2  Y_{2,-6}  Y_{1,-9}^2)
\end{align*}
is not prime, and admits a factorization 
\begin{align*}
[L(M_T)] = [L(Y_{1,-1}  Y_{1,-3}  Y_{1,-5}  Y_{1,-7}  Y_{1,-9}) \otimes L(Y_{2,-4}  Y_{1,-7}  Y_{2,-6}  Y_{1,-9})].
\end{align*}

\subsection{Compatibility of cluster variables} \label{sec:compatibility of two cluster variables}
Two cluster variables in a cluster algebra are called {\sl compatible} if they are in a common cluster. We also call tableaux $T, T'$ corresponding to cluster monomials {\sl compatible} if  $\ch(T)$, $\ch(T')$ are. 

\begin{conjecture} \label{conj: condition of two tableaux are compatible}
Two cluster variables $\ch(T), \ch(T')$ in $\bbc[\Gr(n,m)]$ are compatible only if 
\begin{align*}
\ch(T)\ch(T')=\ch(T \cup T').
\end{align*}
\end{conjecture}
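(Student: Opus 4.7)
The plan is to combine Theorem 3.5 (cluster monomials are $\ch$'s of real tableaux) with Lemma 3.17 (weight decomposition of a product in $K_0(\mathcal{C}_\ell)$) and the basis property of $\{\ch(T)\}$, reducing the conjecture to a bookkeeping argument in the monoid ${\rm SSYT}(n,[m],\sim)$.

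First, suppose $\ch(T)$ and $\ch(T')$ are compatible cluster variables in $\bbc[\Gr(n,m)]$. Their product is then a cluster monomial, and it remains one after passing to $\bbc[\Gr(n,m,\sim)]$. By Theorem 3.5 there exists a real tableau $T''$ with $\ch(T'') = \ch(T)\ch(T')$ in $\bbc[\Gr(n,m,\sim)]$.

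Next, set $M = \Psi(T)$ and $M' = \Psi(T')$. Lemma 3.17 gives
\begin{align*}
[L(M)][L(M')] = [L(MM')] + \sum_{\tilde M \colon \wt(\tilde M)<\wt(MM')} c_{\tilde M}[L(\tilde M)], \quad c_{\tilde M}\in\ZZ_{\geq 0}.
\end{align*}
Applying the algebra isomorphism $\Phi$ and using $\widetilde{\Phi}(MM') = T \cup T'$ from Theorem 3.8, this translates to
\begin{align*}
\ch(T)\ch(T') = \ch(T \cup T') + \sum_{\tilde T \colon \wt(\tilde T)<\wt(T \cup T')} c_{\tilde T}\ch(\tilde T).
\end{align*}
Combining with the previous step yields $\ch(T'') = \ch(T \cup T') + \sum_{\tilde T} c_{\tilde T}\ch(\tilde T)$.

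Since the simples $\{[L(M)]\}_{M\in\mathcal{P}^+_\ell}$ form a basis for $K_0(\mathcal{C}_\ell)$ and $\Phi$ is an isomorphism, the family $\{\ch(S)\}_{S \in {\rm SSYT}(n,[m],\sim)}$ is a basis for $\bbc[\Gr(n,m,\sim)]$. The right hand side has $\ch(T \cup T')$ as its unique term of maximal weight, while the left hand side is the single basis element $\ch(T'')$; comparing basis expansions forces $T'' = T \cup T'$ and every $c_{\tilde T}=0$. This establishes $\ch(T)\ch(T') = \ch(T \cup T')$ in the quotient. To lift to $\bbc[\Gr(n,m)]$, note that both sides are cluster monomials (hence genuine elements of $\bbc[\Gr(n,m)]$, not merely of $\bbc[\Gr^\circ(n,m)]$) with the same $\ZZ^m$-degree, namely the content of $T \cup T'$; so the lift is unique by homogeneity. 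The main subtlety I foresee is the verification that the cluster structure on $\bbc[\Gr(n,m)]$ descends compatibly to the quotient (so that ``compatible in $\bbc[\Gr(n,m)]$'' implies ``compatible in $\bbc[\Gr(n,m,\sim)]$'' and Theorem 3.5 applies), and that no ambiguity in trivial frozens spoils the homogeneous lift; both are handled by tracking $\ZZ^m$-degrees since the contents of the trivial frozens are linearly independent.
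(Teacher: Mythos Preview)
Your argument is correct and is essentially the same as the paper's: both use that a product of compatible cluster variables is a cluster monomial, hence equals $\ch(T'')$ for some $T''$ by Theorem~\ref{thm: cluster monomials are tableaux}, and then identify $T'' = T \cup T'$ via the leading-term argument of Lemma~\ref{lem:LMLMprime decomposition}. (Note that the paper labels this the ``if'' direction, but the argument given---like yours---establishes the ``only if'' direction as stated; the genuinely conjectural part is the converse.)
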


The ``if'' direction of Conjecture \ref{conj: condition of two tableaux are compatible} is clear. If $\ch(T)$ and $\ch(T')$ are compatible, then $\ch(T) \ch(T')$ is a cluster monomial. By Theorem \ref{thm: cluster monomials are tableaux}, the cluster monomial $\ch(T) \ch(T')$ is $\ch(T'')$ for some $T''$. And by 
Lemma \ref{lem:LMLMprime decomposition}, we conclude that $T'' = T \cup T'$. Therefore $\ch(T)\ch(T')=\ch(T \cup T')$.

The combinatorial criterion for compatibility of Pl\"ucker coordinates is known as {\sl weak separation}. It was conjectured by  
\cites{LZ,Sco} and proved by \cites{OPS,DKK}. As a first step towards Conjecture~\ref{conj: condition of two tableaux are compatible}, it would be interesting to verify that for single column tableaux, $\ch(T) \ch(T') = \ch(T \cup T')$ implies that $P_T$ and  $P_{T'}$ are weakly separated.

\begin{example}
The tableaux $
\ytableausetup{mathmode}
T=\begin{ytableau}
1    \\
2   \\
5
\end{ytableau}
$ and 
$T'=\ytableausetup{mathmode}
\begin{ytableau}
1    \\
3   \\
4
\end{ytableau}
$ are compatible. We have
\begin{align*}
\ch(T) \ch(T') = P_{125}P_{134}= P_{124} P_{135}- P_{123} P_{145}= \ch( \ytableausetup{mathmode}
\begin{ytableau}
1  & 1   \\
2  & 3 \\
4  & 5
\end{ytableau}) = \ch(T \cup T').
\end{align*}
The sets $\{1,2,5\}$ and $\{1,3,4\}$ are weakly separated.
\end{example}

\begin{example}
The tableaux $
T=\ytableausetup{mathmode}
\begin{ytableau}
1    \\
2   \\
4
\end{ytableau}
$ and 
$T'=\ytableausetup{mathmode}
\begin{ytableau}
3    \\
5   \\
6
\end{ytableau}
$ are not compatible. We have
\begin{align*}
\ch(T) \ch(T') = P_{124}P_{356},
\end{align*}
and 
\begin{align*}
\ch(T \cup T') = \ch( \ytableausetup{mathmode}
\begin{ytableau}
1  & 3   \\
2  & 5 \\
4  & 6
\end{ytableau})
= P_{124}P_{356}-P_{123}P_{456}.
\end{align*}
The sets $\{1,2,4\}$ and $\{3,5,6\}$ are not weakly separated.
\end{example}

\end{document}